%
%
%
%
%
%
\RequirePackage{fix-cm}
\documentclass[smallextended]{svjour3}       
\smartqed  
\usepackage{graphicx}
%
%
%
%
%

\usepackage{amsmath,amsfonts,latexsym,amssymb,amsbsy}
\usepackage{mathtools, bm}

\usepackage{algorithm}
\usepackage[noend]{algpseudocode}

\algrenewcommand\textproc{\textsf}

\usepackage{booktabs}
\usepackage{lscape}
\usepackage{multicol,multirow}

\usepackage{enumerate}
\usepackage[inline]{enumitem}

\usepackage{pgf, tikz}
\usepackage{pgfplots}
\usetikzlibrary{arrows, automata, shapes}
\usetikzlibrary{positioning}
\pgfplotsset{scaled ticks=false}

\usepackage[colorinlistoftodos]{todonotes}

\usepackage{hyperref}
\hypersetup{
	colorlinks,
	linkcolor={red!50!black},
	citecolor={blue!50!black},
	urlcolor={blue!80!black}
}


\newcommand{\R}{\mathbb{R}}

\newcommand{\B}{\{0,1\}}
\newcommand{\prob}{\mathbb{P}}

\DeclareMathOperator*{\argmax}{arg\,max}

\newcommand{\maxwidth}{\mathcal{W}}
\newcommand{\width}{w}
\newcommand{\bdd}{\mathcal{B}}
\newcommand{\bddr}{\bdd^\mathsf{r}}
\newcommand{\nodes}{\mathcal{N}}
\newcommand{\arcs}{\mathcal{A}}
\newcommand{\nodesr}{\nodes^\mathsf{r}}
\newcommand{\arcsr}{\arcs^\mathsf{r}}

\newcommand{\rootnode}{\mathbf{r}}
\newcommand{\terminalnode}{\mathbf{t} }
\newcommand{\incoming}{\arcs^{\mathsf{in} }}
\newcommand{\outgoing}{\arcs^{\mathsf{out}}}
\newcommand{\source}{s}
\newcommand{\target}{t}
\newcommand{\arcLabel}{v}
\newcommand{\pathbdd}{p}
\newcommand{\pathvar}{\bm{x}}
\newcommand{\rt}{$\rootnode-\terminalnode$}
\newcommand{\sol}{X}
\newcommand{\paths}{\mathcal{P}}
\newcommand{\pathsr}{\mathcal{P}^\mathsf{r}}
\newcommand{\arclength}{\ell}

\newcommand{\costTop}{L^\downarrow}
\newcommand{\costBottom}{L^\uparrow}

\newcommand{\infoMinDown}{Q^{\downarrow\mathsf{min}}}
\newcommand{\infoMinUp}{Q^{\uparrow\mathsf{min}} }
\newcommand{\infoMaxDown}{Q^{\downarrow\mathsf{max}}}
\newcommand{\infoMaxUp}{Q^{\uparrow\mathsf{max}}}

\newcommand{\ds}{\mathrm{\lambda}}
\newcommand{\dszero}{\ds^0}
\newcommand{\dsone}{\ds^1}

\newcommand{\face}{F}
\newcommand{\slacks}{S}
\newcommand{\varIndex}{I}
\newcommand{\bx}{\bm{x}}
\newcommand{\by}{\bm{y}}
\newcommand{\bw}{\bm{w}}
\newcommand{\bQ}{\bm{Q}}
\newcommand{\ba}{\bm{a}}
\newcommand{\bd}{\bm{d}}
\newcommand{\blambda}{\bm{\lambda}}

\newcommand{\bomega}{\bm{\omega}}
\newcommand{\balpha}{\bm{\alpha}}
\newcommand{\bpi}{\bm{\pi}}

\newcommand{\bnu}{\bm{\nu}}
\newcommand{\bbeta}{\bm{\eta}}
\newcommand{\btheta}{\bm{\theta}}
\newcommand{\flow}{\mbox{NF}}
\newcommand{\flowjoint}{\mbox{JNF}}
\newcommand{\flowcap}{\mbox{CNF}}
\newcommand{\cutpoly}{P}
\DeclareMathOperator{\conv}{conv} 
\DeclareMathOperator{\proj}{Proj}
\DeclareMathOperator{\projx}{\proj_{\bx}}


\newcommand{\algReturn}{\textbf{return}}

\newcommand{\algBDDWidthOne}{\textsf{WidthOneBDD}}
\newcommand{\algBDDNodesBottom}{\textsf{UpdateBDDNodesBottom}}
\newcommand{\algBDDNodesTop}{\textsf{UpdateBDDNodesTop}}
\newcommand{\algBDDSplit}{\textsf{SplitBDDNodes}}
\newcommand{\algBDDFilter}{\textsf{FilterBDDOutgoingEdges}}
\newcommand{\algReduceBDD}{\textsf{ReduceBDD}}

\newcommand{\cplex}{$\texttt{CPLEX}$}
\newcommand{\bddFlow}{$\texttt{B-Flow}$}
\newcommand{\bddFlowLift}{$\texttt{B-Flow+L}$}
\newcommand{\bddGeneral}{$\texttt{B-Gen}$}
\newcommand{\bddGeneralLift}{$\texttt{B-Gen+L}$}
\newcommand{\bddGeneralNoFlow}{$\texttt{B-Gen+NoFlow+L}$}
\newcommand{\bddProject}{$\texttt{B-Proj}$}
\newcommand{\bddProjectLift}{$\texttt{B-Proj+L}$}
\newcommand{\bddTarget}{$\texttt{B-Target}$}
\newcommand{\bddTargetFlow}{$\texttt{B-Ta+Fl+L}$}
\newcommand{\bddCover}{$\texttt{Cover+L}$}
\newcommand{\cover}{$\texttt{Cover}$}
\newcommand{\coverLift}{$\texttt{CoverLift}$}
\newcommand{\hybrid}{$\texttt{B-Hybrid}$}

\newcommand{\lift}{$+\texttt{L}$}
\newcommand{\nlift}{$+\texttt{nL}$}


\newcommand{\scatter}[7]{
	\centering
	\begin{tikzpicture}
		\begin{axis}[ 
			xlabel={#6}, ylabel={#7},
			width=0.33\textwidth,
			height=0.33\textwidth,
			ymin=#2, ymax=#3, 
			xmin=#2, xmax=#3,
			max space between ticks=20,
			tick label style={font=\tiny}] 
			\addplot[only marks, mark =o, mark options={scale=0.6}, color= cyan] table[x=#4,y=#5] {#1};
			\addplot[black] coordinates {(#2,#2) (#3,#3)};
		\end{axis}
	\end{tikzpicture}
}

\definecolor{CommentColor}{rgb}{0.90,0.16,0}

\newcommand{\changes}{\color{black}}


\begin{document}

\title{A Combinatorial Cut-and-Lift Procedure with an Application to 0-1 Second-Order Conic Programming
}

\titlerunning{A Combinatorial Cut-and-Lift Procedure}        

\author{Margarita P. Castro \and  Andre A. Cire \and J. Christopher Beck }

\authorrunning{Castro, Cire, and Beck} 

\institute{ Margarita P. Castro 
\at Department of Mechanical and Industrial Engineering, University of Toronto \\ 
\email{mpcastro@mie.utoronto.ca}  
\and
Andre A. Cire
\at Dept. of Management, University of Toronto Scarborough and \\Rotman School of Management \\ 
\email{andre.cire@rotman.utoronto.ca}
\and
J. Christopher Beck
\at Department of Mechanical and Industrial Engineering, University of Toronto,\\ 
\email{jcb@mie.utoronto.ca}
}

\date{Received: date / Accepted: date}

\maketitle

\begin{abstract}

Cut generation and lifting are key components for the performance of state-of-the-art mathematical programming solvers. This work proposes a new general cut-and-lift procedure that exploits the combinatorial structure of 0-1 problems via a binary decision diagram (BDD) encoding of their constraints. We present a general framework that can be applied to a wide range of binary optimization problems and show its applicability for second-order conic inequalities. We identify conditions for which our lifted inequalities are facet-defining and derive a new BDD-based cut generation linear program. Such a model serves as a basis for a max-flow combinatorial algorithm over the BDD that can be applied to derive valid cuts more efficiently.  Our numerical results show encouraging performance when incorporated into a state-of-the-art mathematical programming solver, significantly reducing the root node gap, increasing the number of problems solved, and reducing the run-time by a factor of three on average.

\keywords{Lifting \and Cutting Planes \and Decision Diagrams \and Binary Optimization  \and Second-order Cones}
\end{abstract}

\newpage 

\section{Introduction}

Cutting plane methodologies have played a key role in the theoretical and computational development of mathematical programming 
\cite{bixby2004mixed,Nemhauser1988}. Extensive literature has focused on cuts that exploit special problem substructure, leading to an array of techniques that are now integral into state-of-the-art solvers \cite{lodi2010mixed}. For general problems, cuts are obtained either by leveraging disjunctive reformulations \cite{balas1993lift,balas1996mixed} or by \textit{lifting}, i.e., relaxing an initial inequality so that it is valid for a higher-dimensional polyhedron \cite{Gomory1969,Louveaux2003,Wolsey1976}. 

In this paper, we study both a cut generation procedure and a lifting approach for general binary optimization problems of the form 
%
\begin{align*}
\max_{\bx \in X \subseteq \B^n} \bm{c}^\top \bx,  \tag{BP} \label{eq:binary_model} 
\end{align*}

\noindent where the feasible set $X$ is arbitrary, e.g., possibly represented by a conjunction of linear and/or non-linear constraints. 
Our methodologies consist of exploiting \textit{network structure} via a binary decision diagram (BDD) embedding of $X$. 
A BDD is a graphical model that represents solutions as paths in a directed acyclic graph, which can be viewed as a network-flow reformulation of $X$. Such a model is potentially orders of magnitude smaller than an explicit representation of $X$ as it identifies and merges equivalent partial solutions. 
Several BDD encodings have already been investigated for linear and non-linear problems \cite{behle2007binary,bergman2019binary,lozano2018binary} and are used to exploit submodularity \cite{bergman2018nonlinear} or more general combinatorial structure \cite{bergman2016discrete}.

We propose a sequential lifting procedure that can be applied to any initial inequality (e.g., given by another cutting-plane technique). The lifting algorithm uses 0-1 disjunctions derived from a BDD representation of $X$ to rotate inequalities while maintaining their validity. 
{\changes We show that each step of our sequential lifting can be performed efficiently in the size of the BDD and, when applicable, increases the dimension of the face by at least one. We also establish conditions for which the inequality becomes facet-defining and draw connections between our procedure and existing lifting techniques from disjunctive programming \cite{balas2018disjunctive}, showing that our approach generalizes well-known lifting procedures for 0-1 inequalities \cite{balas1975facets,hammer1975facet,perregaard2001generating}.}
 
{\changes
For our cut generation approach, we propose a reformulation of the BDD polytope based on capacitated flows, which leads to an alternative cut generation linear program (CGLP) for separating infeasible points. We show that the set of cuts derived from this model defines the convex hull of the solutions encoded by the BDD, i.e., $X$, and that this methodology circumvents common issues of existing BDD cut techniques. 
Finally, we build on this model to develop a weaker but computationally faster alternative that solves a combinatorial max-flow/min-cut problem over the BDD to generate valid inequalities.}

For optimization problems where a BDD for $X$ may be exponentially large in $n$, our lifting and cut procedures remain valid when considering instead a limited-size \textit{relaxed} BDD for \ref{eq:binary_model}, i.e., where the BDD encodes a superset of $X$. Several efficient methods exist to build relaxed BDDs, such as only considering a subset of the problem constraints \cite{bergman2016discrete}. This approach is similar in spirit, e.g., to when a linear relaxation is used to lift cover inequalities of a single knapsack constraint \cite{balas1975facets}. {\changes We exploit the discrete relaxation given by the BDD as opposed to a continuous relaxation, which captures some of the combinatorial structure of the problem. Both our lifting procedure and combinatorial cuts are also of low complexity in the size of the BDD and, when a relaxed BDD is employed, its size can be controlled through a parameter limiting its maximum width.

To assess our lifting and cut generation procedure numerically, we apply our methodology to a class of second-order conic programming problems (SOCPs). Second-order conic (SOC) inequalities arise in many applications, including network design \cite{atamturk2018network}, assortment \cite{csen2018conic}, overcommitment \cite{cohen2019overcommitment}, and chance-constrained stochastic problems \cite{van1963minimum,lobo1998applications}.
This paper focuses on SOCPs of the form
\begin{align*}
	\max_{\bx \in \B^n} 	\;\; \left\{\bm{c}^\top \bx : \quad \ba_j^\top\bx + || D_j^\top \bx - \bm{h}_j ||_2 \leq b_j, \quad  \forall j \in \{1,...,m\}  \right\}, \tag{SP} \label{eq:soc_model}
\end{align*}
where $||\cdot||_2$ is the Euclidean norm and, for each $j$, $\ba_j, \bm{h}_j$, and $D_j$ are real vectors and matrices of appropriate dimension. SOCPs are supported by commercial solvers such as \cplex\ \cite{CPLEXManual} and Gurobi \cite{gurobi} which facilitates the evaluation with state-of-the-art techniques. Our methodology, thus, also aims at contributing to the active research in linearization/lifting methods \cite{vielma2008lifted,vielma2017extended} as well as cutting methods \cite{atamturk2009submodular,atamturk2010conic,atamturk2013separation,bhardwaj2015binary,lodi2019disjunctive} in this area. 

We investigate problems with multiple SOC inequalities, each reformulated as an BDD. We experiment on the SOC knapsack benchmark \cite{atamturk2009submodular,joung2017lifting}, and 270 randomly generated instances with more general and challenging SOC inequalities, incorporating our cut-and-lift approach into \cplex. We also compare with existing BDD cutting techniques \cite{tjandraatmadja2019target,davarnia2020outer} and a SOC cut-and-lift method \cite{atamturk2009submodular}, also noting that \cplex{} includes many state-of-the-art SOC techniques \cite{CPLEXManual}. 

Our numerical results indicated that (a) our lifting procedure reduced the average root gap up to 29\%  in our benchmark when applied to cuts generated by all procedures; (b) when cuts are added only at the root node, a hybrid technique combining our method with BDD target cuts \cite{tjandraatmadja2019target} is the most effective for SOC knapsacks, while for general SOC inequalities our cuts perform similarly to target cuts; and (c) when adding cuts during the tree search, the hybrid is also the best performing across all methods, particularly achieving the best final gaps, solution times, and number of solved instances for our general SOC benchmark. Furthermore, the hybrid BDD technique improves upon default \cplex\ and reduced up to 52.2\% of the root node gap, closed at last 17 instances in each benchmark, and decreased solution times by threefold.
}

The paper is structured as follows. \S\ref{sec:background} introduces notation and background material. \S\ref{sec:relatedWork} describes related works in the BDD and lifting literature. \S\ref{sec:lifting} describes our combinatorial lifting procedure while \S\ref{sec:bddcut} details our BDD-based cutting-plane algorithms. \S\ref{sec:soc} introduces the case study problem and describes the BDD encoding for SOC inequalities. Lastly, \S\ref{sec:experiments} and \S\ref{sec:conclusions} present the empirical evaluation and final remarks, respectively.

\section{Background} 
\label{sec:background}

This section introduces the notation used throughout this work and the background material on BDDs. 
For convenience, we assume $n \ge 1$ and let $\varIndex := \{1,...,n\}$ represent the component indices of any $n$-dimensional point $\bx$.

We denote by $\dim(P)$ the dimension of a polytope $P \subseteq [0,1]^n$. An inequality $\bpi^\top \bx \leq \pi_0$ with $\bpi \in \R^n$ and $\pi_0\in \R$ is valid for $P$ if $\bpi^\top \bx \leq \pi_0$ holds for all $\bx \in P$. The inequality defines a face of $P$ if $\face(\bpi) := \{\bx\in P: \bpi^\top \bx = \pi_0 \}$ is not empty, i.e., the inequality \textit{supports} $P$. A face $\face(\bpi)$ is a \textit{facet} if $\dim(\face(\bpi) ) = \dim(P) -1$; in such a case, $\bpi^\top \bx \leq \pi_0$ is \textit{facet-defining}.
Finally, we denote the convex hull of $P$ by $\conv(P)$. 

\medskip
\noindent \textit{Binary Decision Diagrams.} A BDD $\bdd$ is an extended representation of a set $\sol_\bdd \subseteq \{0,1\}^n$ as a network. Specifically, $\bdd= (\nodes, \arcs)$ is a layered directed acyclic graph with node set $\nodes$ and arc set $\arcs$. The node set $\nodes$ is partitioned into $n+1$ \textit{layers} $\nodes = (\nodes_1,..., \nodes_{n+1})$. The first and last layers are the singletons $\nodes_1=\{\rootnode\}$ and $\nodes_{n+1}=\{\terminalnode\}$, respectively, where $\rootnode$ is the root node and $\terminalnode$ is the terminal node. An arc $a = (u,u') \in \arcs$ has a source node $\source(a) = u$ and a target node $\target(a) = u'$ in consecutive layers, i.e., $u' \in \nodes_{i+1}$ whenever $u \in \nodes_i$ for $i \in \varIndex$.

The points of $\sol_\bdd$ are mapped to paths in the network, as follows. With each arc $a \in \arcs$ we associate a value $\arcLabel_a \in \{0,1\}$, where a node $u \in \nodes$ has at most one arc of each value emanating from it. Given an arc-specified \rt\ path $\pathbdd=(a_1,...,a_n)$ with $\source(a_1) = \rootnode$ and $\target(a_n)=\terminalnode$, we let $\pathvar^p := (\arcLabel_{a_1}, \arcLabel_{a_2}, \dots, \arcLabel_{a_n}) \in \{0,1\}^n$ be the $n$-dimensional point encoded by path $\pathbdd$. Thus, if $\paths$ is the set of all \rt\ paths in $\bdd$, the set of points represented by the BDD is
$\sol_\bdd = \bigcup_{\pathbdd \in \paths} \{ \pathvar^p \}.$

A BDD $\bdd$ is \textit{exact} for set $X \subseteq \{0,1\}^n$ when $X = \sol_\bdd$, i.e., there is a one-to-one relationship between the points in $X$ and the \rt\ paths in $\bdd$. Alternatively, $\bdd$ is \textit{relaxed} when $X \subseteq \sol_\bdd$, i.e., every point in $X$ maps to a path in $\bdd$ but the converse is not necessarily true. 

\begin{example}
	\label{exa:running}
	Consider $X=\{ \bx\in \B^4: 7x_1 + 5x_2 + 4x_3 +x_4 \leq 8 \}$. Figure \ref{fig:bdd-example} illustrates two exact BDDs for $X$: $\bdd_1$ on the left-hand side and $\bdd_2$ on the right-hand side. Dashed and solid arcs have a value of 0 and 1, respectively. 	
	Each point $\bx \in X$ is represented by a path in $\bdd_1$ and $\bdd_2$. For example,  $\bx=(1,0,0,1) \in X$ is encoded by the path $((\rootnode,u_2)$, $(u_2,u_4)$, $(u_4,u_5)$, $(u_5,\terminalnode))$ in $\bdd_1$, and by the path $((\rootnode,u_2')$, $(u_2',u_5')$, $(u_5',u_6')$, $(u_6',\terminalnode))$ in $\bdd_2$. \hfill $\square$
\end{example}

\begin{figure}[tb]
	\centering
	\tikzstyle{zero arc} = [draw,dashed, line width=0.5pt,->]
\tikzstyle{one arc} = [draw,line width=0.5pt,->]
\tikzstyle{optimal arc} = [draw,line width=2pt,->]
\tikzstyle{main node} = [circle,fill=gray!50,font=\tiny\bfseries, inner sep=1pt]
\begin{tikzpicture}[->,>=stealth',shorten >=1pt,auto,node distance=1cm,
thick]        
\node[main node] (r) at (0,0) {$\;\rootnode\;$};
\node[main node] (u1)  at (-1,-1)  {$u_1$};
\node[main node] (u2)  at (1,-1)  {$u_2$};
\node[main node] (u3)  at (-1,-2) {$u_3$};
\node[main node] (u4)  at (1,-2) {$u_4$};
\node[main node] (u5)  at (0,-3) {$u_5$};
\node[main node] (t) at (0,-4) {$\;\terminalnode\;$};

\node (l1) at (-2,-0.5) {\scriptsize{$x_1$:}};
\node (l2) at (-2,-1.5) {\scriptsize{$x_2$:}};
\node (l4) at (-2,-2.5) {\scriptsize{$x_3$:}};
\node (l5) at (-2,-3.5) {\scriptsize{$x_4$:}};

\path[every node/.style={font=\sffamily\small}]
(r) 
edge[zero arc] node [left] {} (u1)
edge[one arc] node [left] {} (u2)
(u1) 
edge[zero arc] node [right] {} (u3)
edge[one arc] node [right] {} (u4)
(u2)
edge[zero arc] node [left] {} (u4)
(u3)
edge[zero arc, bend right=20] node [right] {} (u5)
edge[one arc, bend left=20] node [right] {} (u5)
(u4)
edge[zero arc] node [left] {} (u5)
(u5)
edge[zero arc,  bend right=45] node [left] {} (t)
edge[one arc,  bend left=45] node [left] {} (t);

\end{tikzpicture}
	\hspace{4em}
	\tikzstyle{zero arc} = [draw,dashed, line width=0.5pt,->]
\tikzstyle{one arc} = [draw,line width=0.5pt,->]
\tikzstyle{optimal arc} = [draw,line width=2pt,->]
\tikzstyle{main node} = [circle,fill=gray!50,font=\tiny\bfseries, inner sep=1pt]
\begin{tikzpicture}[->,>=stealth',shorten >=1pt,auto,node distance=1cm,
thick]        
\node[main node] (r) at (0,0) {$\;\rootnode\;$};
\node[main node] (u1)  at (-1,-1)  {$u_1'$};
\node[main node] (u2)  at (1,-1)  {$u_2'$};
\node[main node] (u3)  at (-1,-2) {$u_3'$};
\node[main node] (u4)  at (0,-2) {$u_4'$};
\node[main node] (u5)  at (1,-2) {$u_5'$};
\node[main node] (u6)  at (0,-3) {$u_6'$};
\node[main node] (t) at (0,-4) {$\;\terminalnode\;$};

\node (aux1) at (1.3, 0) {};
\node (aux2) at (2.5, 0) {\scriptsize$\arcLabel_a =0$};
\node (aux3) at (1.3, -0.4) {};
\node (aux4) at (2.5, -0.4) {\scriptsize$\arcLabel_a =1$};

\path[every node/.style={font=\sffamily\small}]
(r) 
edge[zero arc] node [left] {} (u1)
edge[one arc] node [left] {} (u2)
(u1) 
edge[zero arc] node [right] {} (u3)
edge[one arc] node [right] {} (u4)
(u2)
edge[zero arc] node [left] {} (u5)
(u3)
edge[zero arc, bend right=20] node [right] {} (u6)
edge[one arc, bend left=20] node [right] {} (u6)
(u4)
edge[zero arc] node [left] {} (u6)
(u5)
edge[zero arc] node [left] {} (u6)
(u6)
edge[zero arc,  bend right=45] node [left] {} (t)
edge[one arc,  bend left=45] node [left] {} (t)

(aux1) edge[zero arc] node [left] {} (aux2)
(aux3) edge[one arc] node [left] {} (aux4);

\end{tikzpicture}
	\caption{Two BDDs $\bdd_1$ (left-hand side) and $\bdd_2$ (right-hand side) with $\sol_{\bdd_1}=\sol_{\bdd_2}=\{ \bx\in \B^4: 7x_1 + 5x_2 + 4x_3 +x_4 \leq 8 \}$. $\bdd_1$ is reduced.}  
	\label{fig:bdd-example} 
\end{figure}

A BDD $\bdd$ is \textit{reduced} if it is the smallest network (with respect to number of nodes) that represents the set $\sol_\bdd$. 
There exists a unique reduced BDD for a given ordering of the indices $\varIndex$. Furthermore, given any $\bdd$ and an ordering, we can obtain its associated reduced BDD in polynomial time in the size of $\bdd$ \cite{bryant1986graph}. For instance, $\bdd_1$ in Figure \ref{fig:bdd-example} is reduced and can be obtained by merging nodes $u_4'$ and $u_5'$ from $\bdd_2$ and adjusting their emanating arcs appropriately.

Several exact and relaxed BDD construction mechanisms are available for general and specialized discrete optimization problems \cite{bergman2016discrete,bergman2018nonlinear,tjandraatmadja2019target}. These techniques either reformulate the problem as a dynamic program, where $\bdd$ represents an underlying state-transition graph, or separate infeasible paths of a relaxed BDD. It is often the case that 
BDDs can be exponentially smaller than enumerating $\sol_\bdd$ explicitly \cite{bergman2016discrete}. 
If exact BDDs are too large, relaxed BDDs can be built for $X$ by either imposing a limit on the number of nodes 
or by considering a subset of the constraints. We discuss the construction and relaxation techniques used for our case study in \S \ref{sec:soc}.

\section{Related Work} 
\label{sec:relatedWork}

Recent research has shown the versatility of BDDs for modeling linear and non-linear inequalities \cite{andersen2007constraint,hoda2010systematic,bergman2018quadratic,bergman2018nonlinear} and there is a growing literature on BDD encodings for vehicle routing \cite{castro2019mdd,raghunathan2018seamless,kinable2017hybrid}, scheduling \cite{cire2013multivalued,van2018multi,van2019lower,hooker2017job}, and other combinatorial optimization problems \cite{bergman2019binary,bergman2012variable,castro2019relaxedbdds,bergman2016discrete}.
Within the context of this work, Becker et al. (2005) \cite{becker2005bdds} presented the first BDD cut generation procedure based on an iterative subgradient algorithm that relies on a longest-path problem over the BDD. Behle (2007) \cite{behle2007binary} formalized this procedure and proposed a branch-and-cut algorithm that employs BDDs to generate exclusion and implication cuts. The author also introduced the network flow model employed by most BDD cutting-plane procedures \cite{davarnia2020outer,lozano2018binary,tjandraatmadja2019target}. 

{\changes
More recently, two BDD-based cutting plane techniques have been proposed with theoretical and computational considerations. Tjandraatmadja and van Hoeve (2019) \cite{tjandraatmadja2019target} generate target cuts from polar sets using relaxed BDDs to derive a more tractable methodology that in Becker et al. (2005) \cite{becker2005bdds}. Davarnia and van Hoeve (2020) \cite{davarnia2020outer} develop an iterative method to generate outer-approximations for non-linear inequalities, relying on a subgradient algorithm to avoid solving linear programs. We compare these two models conceptually to our approach in \S\ref{sec:bddcut-literature} and evaluate them numerically in \S\ref{sec:experiments}. 

Our cutting plane methods are related to the method by Lozano and Smith (2018) \cite{lozano2018binary} for a class of two-stage stochastic programming problems. The authors propose a Benders decomposition approach using BDDs to encode second-stage decisions, where arcs are can be activated or deactivated based on first-stage decisions. Benders cuts are obtained by solving a network-flow model over the BDD where arcs are bounded by the first-stage variables. Similarly, our CGLP model proposed in \S\ref{sec:bddcut} is also based on a capacitated network-flow model. However, our model is more structured in that it incorporates one variable per layer for all arc types, leading to more specialized inequalities and structural results for separation purposes (e.g., Lemma \ref{lem:separation_bdd} and Theorems \ref{theo:bddseparation} and \ref{theo:reduced_bdd}). We also leverage this model to develop our combinatorial max-flow cuts that do not depend on linear programming (LP) solutions.

Our numerical case study is focused on SOCPs. Recent work in the field relies on ideas from split cuts \cite{modaresi2015split}, disjunctions \cite{kilincc2015two,lodi2019disjunctive}, or are more specialized \cite{kilincc2016minimal,santana2017some}. Techniques based on mixed-integer rounding \cite{atamturk2010conic} and lift-and-project \cite{stubbs1999branch} have also shown to be suitable in practice, and are currently implemented in commercial solvers \cite{CPLEXManual}. Several recent works also exploit SOCPs with special structure, such as binary SOC knapsack inequalities \cite{atamturk2009submodular,bhardwaj2015binary,atamturk2013separation,joung2017lifting}. 


While the literature on BDD cutting-plane procedures has grown recently, to the best of our knowledge, this is the first work that leverages BDDs to lift general form linear inequalities. Behle (2007) \cite{behle2007binary} proposes lifting cover inequalities using classic techniques that compute new coefficients one at a time \cite{wolsey1999integer} and where each sub-problem is solved using a BDD.  Becker et al. (2005) \cite{becker2005bdds} also present a mechanism that uses 0-1 disjunctions over a BDD to obtain new inequalities. Their technique differs from ours with respect to both the procedure to obtain the new inequality and its theoretical guarantees. In particular, their lifted inequality might not separate fractional points that the original inequality does nor induce a face with higher dimension.

Our combinatorial lifting relates to sequential lifting algorithms based on 0-1 disjunctions \cite{balas2018disjunctive}, including specialized procedures for the knapsack polytope \cite{balas1975facets,padberg1975note,padberg1973facial} and submodular inequalities \cite{hammer1975facet,atamturk2009submodular}. These procedures successfully address special cases of the general lifting problem we investigate (see \S\ref{sec:lifting}), focusing on given problem structures (e.g., monotone sets). In particular, lifting cover inequalities is a well-studied area \cite{gu1998liftedCom,gu1999liftedCplx}, often using the classical knapsack dynamic program to efficiently lift coefficients \cite{zemel1989easily} when such constraints are present. In contrast, our approach is general in that it can be applied to any type of valid linear inequality (i.e., not restricted to cover inequalities) or feasibility set, including those defined by non-linear constraints. We also note that BDD sizes can be parameterized for large-scale problems.

Lastly, our methodology is closely related to the $n$-step lifting procedure by Perregaard and Balas (2001) \cite{perregaard2001generating}, which generalizes the special cases mentioned above (e.g., lifting cover inequalities). We briefly introduce this procedure below and relate it to our combinatorial lifting algorithm in \S \ref{sec:dp_relationship}.
}

\medskip
\noindent \textit{An Iterative Lifting Procedure based on Disjunctive Programming.} 
Given a mixed-integer linear programming (MILP) problem of the form
$ \max_{\bx} \{ \bm{c}^\top \bx \colon$  $A \bx \leq \bm{b}, x_i \in \mathbb{Z}, \;\; \forall i \in \varIndex' \subseteq \varIndex \}$, the authors \cite{perregaard2001generating} propose the relaxation
\begin{align*}	
\max_{\bx} \bigg\{\bm{c}^\top \bx:\; 
A\bx \leq \bm{b}, \;
\bigvee_{k \in K} D^k\bx \leq \bd^k, x_i \in \mathbb{Z} \;\;\; \forall i \in \varIndex'' \subset \varIndex' \bigg\}, \tag{DP} \label{eq:dp_model}	
\end{align*}
where fewer variables are constrained to be integral. The set $K$ that defines the disjunctive constraints is typically derived by considering the 0-1 integrality constraints of individual variables (e.g., $x_i \le 0 \vee x_i \ge 1$).

Let $P_{DP}$ be the set of solutions of \ref{eq:dp_model}. The $n$-step procedure considers two inputs: (a) an inequality $\bpi^\top\bx \leq \pi_0$ that supports $\conv(P_{DP})$; and (b) an arbitrary target inequality $\widetilde{\bpi}^\top \bx \leq \widetilde{\pi}_0$ that is tight for all integer points in $\face(\bpi)$. The procedure uses a parameter $\gamma$ to rotate the supporting inequality towards the target inequality, generating a new lifted inequality $(\bpi + \gamma\widetilde{\bpi})^\top\bx \leq \pi_0 + \gamma \widetilde{\pi}_0$ that is valid for $\conv(P_{DP})$. In particular, if $\widetilde{\bpi}^\top \bx \leq \widetilde{\pi}_0$ is \textit{not} valid for $P_{DP}$, it can be shown that there is a finite maximal $\gamma$  given by the disjunctive program
\begin{align*}
	\gamma^*= \min_{\bx, x_0} &\bigg\{ \pi_0x_0 - \bpi^\top \bx: \;\;  A\bx - \bm{b}x_0 \leq 0, \;\; \bigvee_{k \in K} D^k\bx -  \bd^k x_0 \leq 0 ,\\
	& \hspace{16ex} \widetilde{\pi}_0 x_0 - \widetilde{\bpi}^\top \bx = -1, \;\;
	x_0 \geq 0, \;\; x_i \in \mathbb{Z} \;\; \forall i \in \varIndex'' \subset \varIndex' \Big\}.
\end{align*}
Under the same assumptions, the lifted inequality becomes a facet of $\conv(P_{DP})$ if the procedure is repeated $n$ times, using the rotated inequality and an appropriate target inequality. 

Similarly, our approach is a sequential procedure that relies on disjunctions. It differs from the above method in that we exploit the combinatorial structure encoded by a BDD as opposed to a disjunctive program relaxation. Such a BDD may encode, e.g., complex non-linear constraints that are not necessarily convex \cite{bergman2018nonlinear}. Furthermore, we also exploit the network to derive a tractable and efficient way to compute several disjunctions simultaneously, while previous algorithms are typically restricted to a small number of disjunctions \cite{perregaard2001generating}.

\section{Combinatorial Lifting} \label{sec:lifting}

We now present our combinatorial lifting procedure and develop its structural properties. 
{\changes 
We begin by introducing our basic methodology in \S\ref{sec:dslacks}, which is defined in general terms and does not depend on an BDD $\bdd$ encoding. Next, in \S\ref{sec:lift-bdd} we present a methodology that exploits network structure to perform the proposed lifting in polynomial time in the size of $\bdd$ (i.e., in the number of nodes and arcs). Next, \S \ref{sec:sequential} incorporates the technique in a sequential procedure and investigate the dimension of the resulting face. Finally, we depict the relationship with previous disjunctive methodologies in \S \ref{sec:dp_relationship}.
}

Throughout this section, we assume that, for a given $X \subseteq \{0,1\}^n$,
\begin{enumerate*}[label=(\alph*)]
\item  inequality $\bpi^\top \bx\leq \pi_0$ is  valid and supports $\conv(X)$; \label{cond:support}
\item $\bdd$ is an exact BDD for $X$, i.e., $X_\bdd = X$; and \label{cond:exact}
\item for any $i \in \varIndex$, there exists $\bx, \bx' \in X$ such that $x_i = 0$ and $x'_i = 1$. \label{cond:dimension}
\end{enumerate*}
Assumption \ref{cond:support} is a common lifting condition that is satisfied by setting $\pi_0 := \max_{\bx \in X} \left\{ \bpi^\top \bx \right\}$. This, in turn, can be enforced in linear time in the size of $\bdd$ (see \S\ref{sec:lift-bdd}). Assumption \ref{cond:exact} is needed for our theoretical results but it can be relaxed in practice (see \S \ref{sec:experiments}). For \ref{cond:dimension}, we can soundly remove any $i$-th component not satisfying the assumption, adjusting $n$ accordingly. 

Our goal is to lift $\bpi^\top \bx\leq \pi_0$ and better represent $\conv(X)$ by exploiting the network structure of $\bdd$. The resulting cuts are valid for any subset $X' \subseteq X$; e.g., when $\bdd$ (and hence $X$) is a relaxation of some feasible set.

\subsection{Disjunctive Slack Lifting}
\label{sec:dslacks}

The core element of our lifting procedure is what we denote by \textit{disjunctive slack vector} (or \textit{d-slack} in short). The $i$-th component of the d-slack indicates the change in the maximum values of the left-hand side of $\bpi^\top \bx\leq \pi_0$ when varying $x_i$. This is formalized in Definition \ref{def:slack}.
\begin{definition} 
	\label{def:slack}
	The disjunctive slack vector $\blambda(\bpi)$ with respect to $\bpi$ is given by
	\[ 
		\ds_i(\bpi) := \dszero_i(\bpi) - \dsone_i(\bpi),  \;\;\;\; \forall i \in \varIndex,
	\]
	with $\dszero_i(\bpi) := \max_{\bx \in X}\{\bpi^\top \bx:x_i =0 \}$ and  $\dsone_i(\bpi) : = \max_{\bx \in X}\{\bpi^\top \bx: x_i =1 \}$.
\end{definition}

For notational convenience, we let $\slacks^{-}(\bpi) := \{i \in \varIndex: \ds_i(\bpi) < 0\}$, $\slacks^{0}(\bpi) := \{i \in \varIndex: \ds_i(\bpi) = 0\}$, and $\slacks^{+}(\bpi) := \{i \in \varIndex: \ds_i(\bpi) > 0\}$ be a partition of $\varIndex$ with respect to negative, zero, and positive d-slacks, respectively. Lemma \ref{lem:properties_ds} presents key properties of d-slacks used for our main results. 
\begin{lemma} \label{lem:properties_ds}
	For any $\blambda(\bpi)$ and index $i \in \varIndex$,
	\begin{enumerate}[label=(\arabic*)] 
	\item $i \in \slacks^{-}(\bpi)$ if and only if $x_i = 1$ for all $\bx \in \face(\bpi)$.
	\item $i \in \slacks^{+}(\bpi)$ if and only if $x_i = 0$ for all $\bx \in \face(\bpi)$.
	\item $i \in \slacks^{0}(\bpi)$ if and only if there exists $ \bx, \bx' \in \face(\bpi)$ with $x_i=0$ and $x'_i=1$.
	\end{enumerate}
\end{lemma}
\begin{proof}
	For the necessary conditions, consider first $x_i = 1$ for all $\bx \in \face(\bpi)$. Since the solutions when optimizing over $\bpi^\top \bx$ must belong to the face $\face(\bpi)$, we must necessarily have $\dsone_i(\bpi) > \dszero_i(\bpi)$ and so the d-slack $\ds_i(\bpi)$ is negative. An analogous reasoning holds for the other two cases.	
	
	For the sufficient conditions, consider first $i \in \slacks^{-}(\bpi)$. Then $\dsone_i(\bpi) = \pi_0$ and $\dszero_i(\bpi) < \pi_0$, i.e., all $\bx\in \face(\bpi)$ are such that $x_i=1$. The same argument can be applied to the case $i \in \slacks^{+}(\bpi)$. Lastly, if $i \in \slacks^{0}(\bpi)$, $\dszero_i(\bpi) = \dsone_i(\bpi)=\pi_0$. Thus, there exists $\bx\in \face(\bpi)$ that maximizes $\dsone_i(\bpi)$ (i.e., $x_i = 1$) and $\bx'\in \face(\bpi)$ that maximizes $\dszero_i(\bpi)$ (i.e., $x'_i = 0$). \hfill $\blacksquare$
\end{proof}

We now show in Theorem \ref{theo:LiftingDim} how to apply the d-slacks to lift $\bpi^\top \bx\leq \pi_0$. In particular, the resulting inequality is valid for $X$ (and thereby $\conv(X)$), the dimension of the face necessarily increases, and points separated by the original inequality are still separated after lifting. This last characteristic is important, e.g., if the input inequality 
$\bpi^\top \bx \leq \pi_0$ was derived to separate a fractional point. Note that we require a d-slack with a non-zero component to rotate the inequality, as we later illustrate in Example \ref{exa:Facet}. 

\begin{theorem} 
	\label{theo:LiftingDim}
	
	Suppose $\lambda_i(\bpi) \neq 0$ for some $i \in \varIndex$. Let $\langle\bpi', \pi'_0\rangle$ be such that 	
	\begin{align*}
		&\pi'_j 
		:=  
		\begin{cases}
			\pi_j 						& \textnormal{if $j \neq i$}, \\
			\pi_j + \lambda_j(\bpi) 	& \textnormal{otherwise,}
		\end{cases}
		&\textnormal{$\forall j \in \varIndex$}, 
		\;\quad
		&\pi_0' 
		:=  
		\begin{cases}
			\pi_0  				& \textnormal{if} \; i \in \slacks^{+}(\bpi), \\
			\pi_0 + \ds_i(\bpi) & \textnormal{otherwise.}
		\end{cases}	
	\end{align*}
	The following properties hold:
	\begin{enumerate}[label=(\arabic*)] 
		\item $\bpi'^\top\bx  \leq  \pi'_0$ is valid for $X$. \label{cond:validity}
		\item $\face(\bpi) \subset \face(\bpi')$ and $\dim(\face(\bpi') )\geq \dim(\face(\bpi)) +1$. \label{cond:lifting}
		\item For any $\bar{\bx}\in [0,1]^n$ with $\bpi^\top\bar{\bx} > \pi_0$, we have  that $\bpi'^\top \bar{\bx} > \pi'_0$. \label{cond:fractional}
	\end{enumerate}
\end{theorem}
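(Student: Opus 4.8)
Since $\ds_i(\bpi)\neq 0$ we are in exactly one of the cases $i\in\slacks^{-}(\bpi)$ or $i\in\slacks^{+}(\bpi)$, and the whole argument is driven by the single observation that in \emph{both} cases $\pi_0'=\dszero_i(\bpi)$. To see this, note that assumption \ref{cond:support} gives $\pi_0=\max_{\bx\in X}\bpi^\top\bx=\max\{\dszero_i(\bpi),\dsone_i(\bpi)\}$ (as already used in the proof of Lemma \ref{lem:properties_ds}); hence if $i\in\slacks^{+}(\bpi)$ then $\pi_0=\dszero_i(\bpi)=\pi_0'$, and if $i\in\slacks^{-}(\bpi)$ then $\pi_0=\dsone_i(\bpi)$ and $\pi_0'=\pi_0+\ds_i(\bpi)=\dsone_i(\bpi)+\dszero_i(\bpi)-\dsone_i(\bpi)=\dszero_i(\bpi)$. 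Throughout I also use that the maxima defining $\dszero_i(\bpi)$ and $\dsone_i(\bpi)$ are attained by some points of $X$, which holds since $X\subseteq\B^n$ is finite and assumption \ref{cond:dimension} guarantees the two slices $\{\bx\in X:x_i=0\}$ and $\{\bx\in X:x_i=1\}$ are nonempty.

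For \ref{cond:validity}, I take an arbitrary $\bx\in X$ and case on $x_i$. If $x_i=0$ then $\bpi'^\top\bx=\bpi^\top\bx\le\dszero_i(\bpi)=\pi_0'$; if $x_i=1$ then $\bpi'^\top\bx=\bpi^\top\bx+\ds_i(\bpi)\le\dsone_i(\bpi)+\bigl(\dszero_i(\bpi)-\dsone_i(\bpi)\bigr)=\dszero_i(\bpi)=\pi_0'$, which gives validity for $X$ and hence for $\conv(X)$.

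For \ref{cond:lifting}, I first check $\face(\bpi)\subseteq\face(\bpi')$: by Lemma \ref{lem:properties_ds} every $\bx\in\face(\bpi)$ has $x_i=1$ if $i\in\slacks^{-}(\bpi)$ and $x_i=0$ if $i\in\slacks^{+}(\bpi)$, and substituting into $\bpi'^\top\bx=\bpi^\top\bx+\ds_i(\bpi)x_i=\pi_0+\ds_i(\bpi)x_i$ yields $\pi_0'$ in each case. For the strict inclusion and the dimension bump, I produce one witness $\bx^\star\in\face(\bpi')\setminus\face(\bpi)$ that is affinely independent from $\face(\bpi)$: if $i\in\slacks^{-}(\bpi)$ I pick $\bx^\star\in X$ attaining $\dszero_i(\bpi)$ (so $x_i^\star=0$), and if $i\in\slacks^{+}(\bpi)$ I pick $\bx^\star\in X$ attaining $\dsone_i(\bpi)$ (so $x_i^\star=1$); the same computation as in \ref{cond:validity} gives $\bpi'^\top\bx^\star=\dszero_i(\bpi)=\pi_0'$, so $\bx^\star\in\face(\bpi')$. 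Since Lemma \ref{lem:properties_ds} confines the affine hull of $\face(\bpi)$ to the hyperplane on which the $i$-th coordinate is constant ($1$, resp.\ $0$), and $\bx^\star$ takes the opposite value there, $\bx^\star$ cannot lie in that affine hull; hence $\dim(\face(\bpi'))\ge\dim(\face(\bpi))+1$ and the inclusion is strict.

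For \ref{cond:fractional}, given $\bar\bx\in[0,1]^n$ with $\bpi^\top\bar\bx>\pi_0$, I rewrite the target quantity as $\bpi'^\top\bar\bx-\pi_0'=(\bpi^\top\bar\bx-\pi_0)+\ds_i(\bpi)\,\bar x_i$ when $i\in\slacks^{+}(\bpi)$ (since $\pi_0'=\pi_0$) and as $\bpi'^\top\bar\bx-\pi_0'=(\bpi^\top\bar\bx-\pi_0)+\ds_i(\bpi)(\bar x_i-1)$ when $i\in\slacks^{-}(\bpi)$ (since $\pi_0'=\pi_0+\ds_i(\bpi)$). In both cases the first summand is strictly positive and the second is nonnegative — $\ds_i(\bpi)>0$ with $\bar x_i\ge 0$, resp.\ $\ds_i(\bpi)<0$ with $\bar x_i-1\le 0$ — so $\bpi'^\top\bar\bx>\pi_0'$. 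The step I expect to be most delicate is the dimension claim in \ref{cond:lifting}: the cleanest route is exactly the reduction above, namely that Lemma \ref{lem:properties_ds} traps $\face(\bpi)$ in a coordinate hyperplane so that any feasible point of $\face(\bpi')$ with the opposite coordinate value is automatically affinely independent from $\face(\bpi)$, with finiteness of $X$ and assumption \ref{cond:dimension} ensuring such a point exists.
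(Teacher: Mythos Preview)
Your proof is correct and follows essentially the same route as the paper's: case on the sign of $\ds_i(\bpi)$ and on $x_i\in\{0,1\}$ for validity, invoke Lemma~\ref{lem:properties_ds} to see that $\face(\bpi)$ lives in a coordinate hyperplane, and exhibit the maximizer of $\dsone_i(\bpi)$ (resp.\ $\dszero_i(\bpi)$) as the affinely independent witness for the dimension jump. Your unifying identity $\pi_0'=\dszero_i(\bpi)$ in both sign cases is a tidy streamlining that lets you run validity and the witness computation once rather than twice, but the underlying argument is the same as the paper's.
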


\begin{proof}
	Let $\bx \in X$. We begin by showing \ref{cond:validity} and \ref{cond:lifting}. Assume first that $i \in S^{+}(\bpi)$. By construction,
	$
		\bpi'^\top\bx  \le \pi'_0 \Longleftrightarrow \bpi^\top \bx + \ds_i(\bpi)x_i \le \pi_0.
	$
	
	If $x_i = 0$, the lifted inequality is equivalent to the original and therefore valid. Otherwise, if $x_i = 1$, $i \in \slacks^{+}(\bpi)$ implies that $\ds_i(\bpi) = \pi_0 - \dsone_i(\bpi)$. Thus,
	\begin{align*}
		\bpi'^\top\bx  \le \pi'_0 \Longleftrightarrow \bpi^\top \bx + \pi_0 - \dsone_i(\bpi) \le \pi_0 \Longleftrightarrow \bpi^\top \bx \le \dsone_i(\bpi).
	\end{align*}
	The last inequality above holds because we are restricting to the case $x_i = 1$ and, by definition,  $\dsone_i(\bpi) = \max_{\bx' \in X}\{\bpi^\top \bx': x'_i = 1 \}$. Since $x'_i = 0$ for all $\bx' \in \face(\bpi)$ (Lemma \ref{lem:properties_ds}), the lifted inequality is tight for all $\bx' \in \face(\bpi)$, i.e.,  $\face(\bpi) \subset \face(\bpi')$. Notice also that this inequality is tight for $\bx^* = \argmax_{\bx' \in X}\{\bpi^\top \bx': x'_i = 1 \}$, i.e.,
	$\bx^* \in \face(\bpi')$. Then, $\bx^*$ is affinely independent to all points of $\face(\bpi)$ and therefore  $\dim(\face(\bpi') )\geq \dim(\face(\bpi)) + 1$.

	Assume now that $i \in S^{-}(\bpi)$. Once again by construction,
	$\bpi'^\top\bx  \le \pi'_0$ if and only if $\bpi^\top \bx + \ds_i(\bpi)x_i \le \pi_0 + \lambda_i(\bpi)$. If $x_i = 1$, the lifted inequality is equivalent to the original and therefore valid. Otherwise, if $x_i = 0$, $i \in \slacks^{-}(\bpi)$ implies that $\ds_i(\bpi) = \dszero_i(\bpi) - \pi_0$. Thus,
	\begin{align*}
		\bpi'^\top\bx  \le \pi'_0 \Longleftrightarrow \bpi^\top \bx \le \pi_0 + \dszero_i(\bpi) - \pi_0 \Longleftrightarrow \bpi^\top \bx \le \dszero_i(\bpi).
	\end{align*}
	The last inequality above holds because $x_i = 0$ and, by definition,  $\dszero_i(\bpi) = \max_{\bx' \in X} \{\bpi^\top \bx': x'_i = 0 \}$. As before, notice that this inequality is tight for $\bx^* = \argmax_{\bx' \in X}\{\bpi^\top \bx': x'_i = 0 \}$, i.e., $\bx^* \in \face(\bpi')$. Since $x'_i = 1$ for all $\bx' \in \face(\bpi)$ (Lemma \ref{lem:properties_ds}), the inequality is tight for all $\bx'\in \face(\bpi)$,
	$\bx^*$ is affinely independent to all points of $\face(\bpi)$, and therefore $\dim(\face(\bpi') )\geq \dim(\face(\bpi)) + 1$.
	
	Lastly, we demonstrate \ref{cond:fractional}. We restrict to the case $i \in \slacks^{+}(\bpi)$; the other case is analogous. Given a fractional point $\bar{\bx}\in [0,1]^n$ as defined above, we have $\bpi'^\top \bar{\bx} = \bpi^\top \bar{\bx} + \ds_i(\bpi)\bar{x}_i > \pi_0 +  \ds_i(\bpi)\bar{x}_i \geq  \pi_0=\pi_0'$.  \hfill $\blacksquare$
	
\end{proof}

\begin{example}\label{exa:Facet}
	Let $X=\{ \bx\in \B^4: 7x_1 + 5x_2 + 4x_3 +x_4 \leq 8 \}$ and consider an inequality $x_1 + x_2  \leq 1$ supporting $\conv(X)$. The d-slack is $\blambda(\bpi) = (0,0,1, 0)^\top$ and the lifted inequality with respect to $\ds_3(\bpi) = 1$ is $\bpi'^\top\bx =x_1 + x_2 + x_3 \leq 1$. Note that $\bpi'^\top\bx\leq 1$ is facet-defining for $\conv(X)$ and $\blambda(\bpi')= \bm{0}$. \hfill $\square$
\end{example}

\subsection{Extracting Disjunctive Slacks from a BDD} 
\label{sec:lift-bdd}

Identifying d-slacks $\blambda(\bpi)$ is a non-trivial task since we are required to solve $2n$ binary optimization problems, i.e., one for each component $i \in \varIndex$ and values 0 and 1. 
{\changes In this section, we leverage the network representation of a BDD $\bdd=(\nodes, \arcs)$ for $X$ to generate all d-slacks simultaneously, which is key to the computational complexity of the approach. We also show that the procedure complexity is linear in the number of arcs $|\arcs|$ of $\bdd$.}

We associate a \textit{length} of $\pi_i \cdot \arcLabel_a$ to each arc $a \in \arcs$ with value $\arcLabel_a \in \{0,1\}$ and source $\source(a) \in \nodes_i$ for some $i \in \varIndex$. 
The longest \rt\ path of $\bdd$ with respect to such lengths maximizes $\bpi^\top \bx$ over $X$.
Given the \rt\ paths $\paths$ of $\bdd$, let
\[
	\arclength_a := \max \left\{ \sum_{k=1}^n \pi_k \cdot \arcLabel_{a_k} \colon \textnormal{$\pathbdd = (a_1,\dots,a_n) \in \paths,  a_i=a$} \right\} 
\]
be the longest-path value conditioned on all paths that include arc $a$. Because each variable is uniquely associated with a layer, it follows that
\begin{align*}
	\ds^j_i(\bpi) = \max_{a \in \arcs} \left\{ \arclength_a \colon \source(a) \in \nodes_i, \; \arcLabel_a = j  \right\}, \quad \textnormal{$\forall i \in \varIndex$, $\forall j \in \B$},
\end{align*}
and the final d-slacks are obtained by the differences $\dszero_i(\bpi) - \dsone_i(\bpi)$ for all $i$. 


The lengths $\arclength_a$ are derived by performing two longest-path computations over $\bdd$. Specifically, let 
$\incoming(u)$ and $\outgoing(u)$ be the set of incoming and outgoing arcs of a node $u \in \nodes$, respectively. The solution of the recursion $\costTop(\bpi, \rootnode) = 0$,
\begin{align*}
	\costTop(\bpi, u) 				&= \max_{a \in \incoming(u)} \{ \costTop(\bpi,\source(a) ) + \pi_{i-1} \cdot \arcLabel_a \},  &\textnormal{$\forall u \in \nodes_i, \forall i \in \{2,\dots,n+1\}$}
\end{align*}
provides the longest-path value $\costTop(\bpi, u)$ from $\rootnode$ to $u$, while  $\costBottom(\bpi, \terminalnode) = 0$,
\begin{align*}
	\costBottom(\bpi, u) 				&= \max_{a \in \outgoing(u)} \{ \costBottom(\bpi,\target(a) ) + \pi_{i} \cdot \arcLabel_a \},  &\textnormal{$\forall u \in \nodes_i, \; \forall i \in \{1,\dots,n\}$,}
\end{align*}
provides the longest-path value $\costBottom(\bpi, u)$ from $u$ to $\terminalnode$. The values $\costTop(\bpi, u)$ can be calculated via a top-down pass on $\bdd$, i.e., starting from $\rootnode$ and considering one layer $\nodes_2, \dots, \nodes_{n+1}$ at a time. Analogously, the values 
$\costBottom(\bpi, u)$ are obtained via a bottom-up pass on $\bdd$, i.e., starting from $\terminalnode$ and considering one layer $\nodes_{n}, \nodes_{n-1}, \dots, \nodes_1$ at a time. For any arc $a=(\source(a),\target(a))$ such that $\source(a) \in \nodes_i$, its length is given by
$ 
	\arclength_a = \costTop(\bpi, \source(a)) + \costBottom(\bpi, \target(a)) + \pi_{i}\cdot\arcLabel_a.
$ 
Since each arc is traversed twice via the top-down and bottom-up passes, the complexity of the procedure is $\mathcal{O}(|A|)$.

{\changes
We remark that the algorithm above is similar in spirit to the lifting procedures for cover inequalities based on dynamic programming \cite{zemel1989easily,gu1998liftedCom,wolsey1999integer}, in particular also solving a recursive model to lift coefficients. The existing techniques, however, are applicable only for the knapsack polytope, solve a new dynamic program for each inequality to be lifted, and specialize on cover inequalities. In contrast, our approach is suitable for any set $X$ and valid linear inequality, and utilizes the same BDD to lift any given inequality (i.e., the BDD needs to be constructed only once).
}

\subsection{Sequential Lifting and Dimension Implications}
\label{sec:sequential}

The lifting procedure detailed in Theorem \ref{theo:LiftingDim} can be applied sequentially to strengthen an inequality. 
Specifically, we start with $\langle \bpi, \pi_0 \rangle$ satisfying our main assumptions \ref{cond:support}-\ref{cond:dimension}. Next, we calculate the 
d-slacks, choose $i \in \varIndex$ such that $\ds_i(\bpi) \neq 0$, and apply Theorem \ref{theo:LiftingDim} to obtain the tuple $\langle \bpi', \pi'_0 \rangle$ defining the lifted inequality. We re-apply this operation with the new $\langle \bpi', \pi'_0 \rangle$, and repeat until all d-slacks are equal to zero. The procedure stops in a finite number of iterations since the face dimension increases after each rotation; see property \ref{cond:lifting} of Theorem \ref{theo:LiftingDim}. We summarize the procedure in Algorithm \ref{alg:Lifting}.

\begin{algorithm}[tb] 
	\caption{Sequential Combinatorial Lifting Procedure} \label{alg:Lifting}
	\begin{algorithmic}[1]
		\Procedure{CombinatorialLifting}{$\langle \bpi, \pi_0\rangle $, $\bdd$}
		\State Calculate the disjunctive slacks $\blambda(\bpi)$ using $\bdd$ as explained in \S \ref{sec:lift-bdd}
		\While{$\blambda(\bpi)\neq \bm{0}$ }
		\State Choose $i \in \varIndex$ such that $\ds_i(\bpi) \neq 0$ \label{alg:varChoice}
		\State Apply Theorem \ref{theo:LiftingDim} to calculate $\langle \bpi', \pi_0' \rangle$
		\State Set $\langle \bpi, \pi_0\rangle$ = $\langle \bpi', \pi_0'\rangle$
		\State Recalculate $\blambda(\bpi)$
		\EndWhile
		\State \textbf{return} $\langle \bpi, \pi_0\rangle $ 
		\EndProcedure
	\end{algorithmic}
\end{algorithm}

The choice of $i$ in step \ref{alg:varChoice} of Algorithm \ref{alg:Lifting} is critical to the dimension of the resulting face, 
as illustrated in Example \ref{exa:noFacet}. 

\begin{example}\label{exa:noFacet}
	Consider the set $X = \{x\in \B^3: 5x_1+2x_2+3x_3 \leq 6 \}$ and inequality $\bpi^\top\bx = x_1 + x_2 + x_3 \leq 2$ that supports $\conv(X)$. We have $\blambda(\bpi) = (1,-1,-1)^\top$ and the lifted inequality with respect to $\ds_1(\bpi) = 1$ is $\bpi'^\top\bx =2x_1 + x_2 + x_3 \leq 2$ and has $\blambda(\bpi') = \bm{0}$. The lifted inequality is not facet-defining since $\dim(\conv(X)) = 3$ and $\dim(\face(\bpi')) = 1$.
	
	If we instead lift $x_1 + x_2 + x_3 \leq 2$ with respect to $\ds_2(\bpi) =-1$ the lifted inequality is $\bpi'^\top\bx =x_1 + x_3 \leq 1$ and $\blambda(\bpi') = \bm{0}$. In this case, the lifted inequality is facet-defining since $\dim(\face(\bpi')) = 2$.
	 \hfill $\square$
\end{example}

In order to understand the impact of the index choice, we first show in Lemma \ref{lem:dimFace} a relationship between d-slacks and the dimension of the face. Specifically, the cardinality of $\slacks^0(\bpi)$ bounds $\dim(\face(\bpi))$. We later use this result to gauge when the sequential procedure leads to a facet-defining inequality.

\begin{lemma} \label{lem:dimFace}
	The dimension of a face $\face(\bpi)$ satisfies $\dim(\face(\bpi) ) \leq |\slacks^{0}(\bpi)|$.  Moreover, $|\slacks^{0}(\bpi)| = 0$ if $\dim(\face(\bpi) ) = 0$.
\end{lemma}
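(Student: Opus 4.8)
The plan is to bound the dimension of $\face(\bpi)$ by constructing an explicit system of linearly independent equations valid on $\face(\bpi)$, one for each index in $\slacks^{-}(\bpi) \cup \slacks^{+}(\bpi)$. First I would invoke Lemma \ref{lem:properties_ds}: every $i \in \slacks^{-}(\bpi)$ yields the equation $x_i = 1$ valid for all $\bx \in \face(\bpi)$, and every $i \in \slacks^{+}(\bpi)$ yields $x_i = 0$ valid for all $\bx \in \face(\bpi)$. These $|\slacks^{-}(\bpi)| + |\slacks^{+}(\bpi)| = n - |\slacks^{0}(\bpi)|$ equations are trivially linearly independent, since each involves a distinct coordinate $x_i$ with unit coefficient and the index sets are disjoint. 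Hence $\face(\bpi)$ lies in an affine subspace of dimension at most $n - (n - |\slacks^{0}(\bpi)|) = |\slacks^{0}(\bpi)|$, giving $\dim(\face(\bpi)) \leq |\slacks^{0}(\bpi)|$.

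For the second statement, I would argue the contrapositive: suppose $|\slacks^{0}(\bpi)| \geq 1$, say $i \in \slacks^{0}(\bpi)$. By part (3) of Lemma \ref{lem:properties_ds}, there exist $\bx, \bx' \in \face(\bpi)$ with $x_i = 0$ and $x'_i = 1$; in particular $\bx \neq \bx'$, so $\face(\bpi)$ contains at least two distinct points and $\dim(\face(\bpi)) \geq 1$. Taking the contrapositive, $\dim(\face(\bpi)) = 0$ forces $|\slacks^{0}(\bpi)| = 0$. (Note $\face(\bpi)$ is nonempty by assumption \ref{cond:support}, so $\dim(\face(\bpi)) = 0$ means it is a single point.)

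I do not anticipate a serious obstacle here: the lemma is essentially a bookkeeping consequence of Lemma \ref{lem:properties_ds}, and the only subtlety is making sure the linear independence of the fixed-coordinate equations is stated cleanly — this is immediate because the equations are $x_i = c_i$ over a partition of the indices. One could optionally sharpen the first bound by also noting assumption \ref{cond:dimension} is not needed for the upper bound, but it is worth remarking that no further equations are implied in general, which is why the inequality can be strict (as Example \ref{exa:noFacet} shows, where $|\slacks^{0}| = 0$ after lifting while $\dim = 1$ is impossible, but before lifting one can have $|\slacks^0|$ strictly exceeding the dimension).
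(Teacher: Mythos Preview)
Your proof is correct and takes essentially the same approach as the paper: both invoke Lemma~\ref{lem:properties_ds} to fix the coordinates indexed by $\slacks^{-}(\bpi)\cup\slacks^{+}(\bpi)$, then count (you via independent equations, the paper via affinely independent points) to bound the dimension, and both argue the second claim by the contrapositive using part~(3) of Lemma~\ref{lem:properties_ds}. One small slip in your closing parenthetical: in Example~\ref{exa:noFacet} the lifted inequality has $\blambda(\bpi')=\bm{0}$, which means $|\slacks^{0}(\bpi')|=3$, not $0$; that is precisely the case where $|\slacks^{0}|$ strictly exceeds the face dimension.
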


\begin{proof}
	For any $i \in \slacks^{-}(\bpi) \cup \slacks^{+}(\bpi)$, the value of $x_i$ is fixed at either 0 or 1 for all $\bx \in \face(\bpi)$ according to Lemma \ref{lem:properties_ds}. Thus, the dimension of $\dim(\face(\bpi))$ is bounded by $|\slacks^{0}(\bpi)|$, since at most $|\slacks^{0}(\bpi)| + 1$ affinely independent points can be obtained from $\face(\bpi)$. 
	Now, assume $\slacks^{0}(\bpi) \neq \emptyset$ and $\dim(\face(\bpi) ) \geq 0$. There exist $\bx, \bx' \in \face(\bpi)$ such that $x_i \neq x'_i$ for 
	$i \in \slacks^{0}(\bpi)$. These two points are affinely independent and therefore $\dim(\face(\bpi)) \ge 1$. Thus, 
	$\slacks^{0}(\bpi) = \emptyset$ if $\dim(\face(\bpi)) = 0$. \hfill $\blacksquare$
\end{proof}

Example \ref{exa:noFacet} depicts a case where $|\slacks^{0}(\bpi)|$ increases faster than the number of affinely independent points in 
$\face(\bpi)$. In view of Lemma \ref{lem:dimFace}, we would like to choose $i$ so that $|\slacks^{0}(\bpi)|$ increases at a slower rate, since each lifting operation increases $\dim(\face(\bpi))$ by at least one according to Theorem \ref{theo:LiftingDim}-\ref{cond:lifting}. We show in Theorem \ref{theo:LiftingOrder} that the slow increase of  $|\slacks^{0}(\bpi)|$ occurs when there exists a unique slack with minimum non-zero absolute value.

\begin{theorem}\label{theo:LiftingOrder}
	Suppose there exists $i \not \in \slacks^{0}(\bpi)$ such that $|\ds_i(\bpi)| < |\ds_{i'}(\bpi)|$ for all 
	$i' \not \in \slacks^{0}(\bpi)$ ($i'\neq  i$). Then, for $\langle \bpi', \pi_0' \rangle$ obtained when lifting $\langle \bpi, \pi_0 \rangle$ with respect to $\ds_i(\bpi)$, $\dim(\face(\bpi')) = \dim(\face(\bpi)) + 1$ and $|\slacks^0(\bpi')| = |\slacks^0(\bpi)| + 1$.
\end{theorem}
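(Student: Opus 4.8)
The plan is to prove the two conclusions in turn. The identity $|\slacks^{0}(\bpi')| = |\slacks^{0}(\bpi)| + 1$ is where the uniqueness hypothesis is essential; the dimension equality will then follow by combining this count with Lemma \ref{lem:dimFace} and the lower bound already supplied by Theorem \ref{theo:LiftingDim}-\ref{cond:lifting}.

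\emph{An explicit description of $\face(\bpi')$.} I would first assume $i \in \slacks^{+}(\bpi)$; the case $i \in \slacks^{-}(\bpi)$ is symmetric, exchanging the roles of $0$ and $1$ in coordinate $i$. By Theorem \ref{theo:LiftingDim} we have $\pi'_0 = \pi_0$, $\pi'_i = \pi_i + \ds_i(\bpi)$, $\pi'_j = \pi_j$ for $j \neq i$, and $\ds_i(\bpi) = \pi_0 - \dsone_i(\bpi)$. Substituting these into $\bpi'^\top\bx = \pi'_0$ shows that, for $\bx \in X$, this holds iff either $x_i = 0$ and $\bpi^\top\bx = \pi_0$, i.e. $\bx \in \face(\bpi)$, or $x_i = 1$ and $\bpi^\top\bx = \dsone_i(\bpi)$, i.e. $\bx \in G := \argmax\{\bpi^\top\bx : \bx \in X,\, x_i = 1\}$. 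Hence $\face(\bpi') \cap X = (\face(\bpi) \cap X) \sqcup G$, with both parts nonempty ($G$ by assumption \ref{cond:dimension}); since faces of $\conv(X)$ are convex hulls of the $X$-points they contain, $\face(\bpi') = \conv\big((\face(\bpi) \cap X) \cup G\big)$.

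\emph{The $\slacks^{0}$ count.} The inclusion $\slacks^{0}(\bpi) \cup \{i\} \subseteq \slacks^{0}(\bpi')$ is immediate from Lemma \ref{lem:properties_ds}: each $j \in \slacks^{0}(\bpi)$ admits points of $\face(\bpi) \subseteq \face(\bpi')$ with $x_j = 0$ and with $x_j = 1$, while $\face(\bpi)$ has $x_i = 0$ and $G$ has $x_i = 1$. For the reverse inclusion, fix $j \notin \slacks^{0}(\bpi) \cup \{i\}$; then $x_j$ is constant on $\face(\bpi)$, equal to some $v_j$. I would show $x_j = v_j$ also for every $\bx \in G$, which forces $x_j \equiv v_j$ on $\face(\bpi')$ and hence $j \notin \slacks^{0}(\bpi')$. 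Indeed, if $j \in \slacks^{+}(\bpi)$ (so $v_j = 0$) and some $\bx \in G$ had $x_j = 1$, then $\bpi^\top\bx \leq \dsone_j(\bpi) = \pi_0 - \ds_j(\bpi)$, whereas $\bx \in G$ gives $\bpi^\top\bx = \dsone_i(\bpi) = \pi_0 - \ds_i(\bpi)$; subtracting forces $\ds_j(\bpi) \leq \ds_i(\bpi)$, hence $|\ds_j(\bpi)| \leq |\ds_i(\bpi)|$, contradicting the hypothesis. The case $j \in \slacks^{-}(\bpi)$ is the mirror image, using $\dszero_j(\bpi) = \pi_0 + \ds_j(\bpi)$. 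This yields $\slacks^{0}(\bpi') = \slacks^{0}(\bpi) \sqcup \{i\}$, so $|\slacks^{0}(\bpi')| = |\slacks^{0}(\bpi)| + 1$.

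\emph{The dimension, and the main obstacle.} Theorem \ref{theo:LiftingDim}-\ref{cond:lifting} already gives $\dim(\face(\bpi')) \geq \dim(\face(\bpi)) + 1$, so the hard part will be the matching upper bound $\dim(\face(\bpi')) \leq \dim(\face(\bpi)) + 1$; this is the step I expect to be the main obstacle. The cleanest route is Lemma \ref{lem:dimFace} applied to $\bpi'$, which together with the count above gives $\dim(\face(\bpi')) \leq |\slacks^{0}(\bpi')| = |\slacks^{0}(\bpi)| + 1$; this equals $\dim(\face(\bpi)) + 1$ precisely when $\dim(\face(\bpi)) = |\slacks^{0}(\bpi)|$, i.e. when $\face(\bpi)$ attains the maximal dimension permitted by Lemma \ref{lem:dimFace} --- a property I expect to need on the input inequality and which is then preserved by the rotation (both $\dim(\face)$ and $|\slacks^{0}|$ increase by one). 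Alternatively one argues directly from the decomposition of Step 1: the coordinates in $(\slacks^{-}(\bpi) \cup \slacks^{+}(\bpi)) \setminus \{i\}$ are constant on $\face(\bpi')$ and $x_i$ is $0$ on $\face(\bpi)$ and $1$ on $G$, whence $\dim(\face(\bpi')) = \dim\big(\mathrm{lin}(\face(\bpi) - \face(\bpi)) + \mathrm{lin}(G - G)\big) + 1$; it then remains to show $\mathrm{lin}(G - G) \subseteq \mathrm{lin}(\face(\bpi) - \face(\bpi))$, and since both spaces lie in $\bpi^{\perp}$ restricted to the $\slacks^{0}(\bpi)$-coordinates, this once more reduces to the maximal-dimension property of $\face(\bpi)$. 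Granting the upper bound, $\dim(\face(\bpi')) = \dim(\face(\bpi)) + 1$ follows.
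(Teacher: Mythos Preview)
Your argument for $|\slacks^0(\bpi')| = |\slacks^0(\bpi)| + 1$ is the paper's argument: for each $j \in (\slacks^{-}(\bpi) \cup \slacks^{+}(\bpi)) \setminus \{i\}$, assume a point of $\face(\bpi')$ takes the ``wrong'' value in coordinate $j$ and derive $|\ds_j(\bpi)| \leq |\ds_i(\bpi)|$ from the resulting bound on $\bpi^\top\bx$, contradicting strict minimality. You route this through the decomposition $\face(\bpi') \cap X = (\face(\bpi) \cap X) \cup G$, whereas the paper works directly with an arbitrary $\bx \in \face(\bpi')$ via the inequality $\bpi^\top\bx \geq \pi_0 - \ds_i(\bpi)$; the computation is identical. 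The paper then concludes with precisely the two citations you propose: Theorem \ref{theo:LiftingDim}-\ref{cond:lifting} for the lower bound on $\dim(\face(\bpi'))$ and Lemma \ref{lem:dimFace} for the upper.

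The obstacle you isolate is real and is shared by the paper's proof. Lemma \ref{lem:dimFace} yields only $\dim(\face(\bpi')) \leq |\slacks^0(\bpi')| = |\slacks^0(\bpi)| + 1$, which matches $\dim(\face(\bpi)) + 1$ exactly when $\dim(\face(\bpi)) = |\slacks^0(\bpi)|$; the paper's one-line appeal to these two results tacitly uses that equality. It does not follow from the stated hypotheses: with $n = 4$, $X = \{(0,0,0,0),\,(1,1,0,0),\,(0,0,1,0),\,(1,1,1,0),\,(0,1,1,0),\,(1,0,1,0),\,(0,0,0,1)\}$, $\bpi = (0,0,-1,-2)$ and $\pi_0 = 0$, one has $\face(\bpi) \cap X = \{(0,0,0,0),(1,1,0,0)\}$ of dimension $1$, $\slacks^0(\bpi) = \{1,2\}$, and $i = 3$ is the unique minimum nonzero slack; yet after lifting, $\bpi' = (0,0,0,-2)$ and $\face(\bpi') \cap X$ consists of all six points with $x_4 = 0$, which has dimension $3$, not $2$. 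So you have correctly identified both the content and the limitation of the argument: the $|\slacks^0|$ conclusion is fully proved (and your proof matches the paper's), while the dimension equality goes through only under the additional hypothesis $\dim(\face(\bpi)) = |\slacks^0(\bpi)|$, which is precisely what Corollary \ref{cor:facetDefining} assumes.
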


\begin{proof}
	From Lemma \ref{lem:properties_ds}, it suffices to show that, for any $\bx \in \face(\bpi')$ and $i' \not\in \slacks^{0}(\bpi)$ such that $i' \neq i$, we have:
	\begin{enumerate*}[label=\arabic*)]
	\item $i' \in \slacks^{+}(\bpi)$ implies that $x_{i'}=0$; and 
	\item $i' \in\slacks^{-}(\bpi)$ implies that $x_{i'}=1$.
	\end{enumerate*}
	In such cases, an index $i'$ that was originally in $\slacks^{-}(\bpi)$ or $\slacks^{+}(\bpi)$ will remain in its original partition $\slacks^{-}(\bpi')$ or $\slacks^{+}(\bpi')$ for the lifted $\bpi'$. The statement then follows due to Theorem \ref{theo:LiftingDim}-\ref{cond:lifting} and Lemma \ref{lem:dimFace}. 
	
	We will focus our attention to the case $\ds_i(\bpi) > 0$ (the others are analogous). For any $\bx \in \face(\bpi')$, we have by construction that 
	$\bpi^\top\bx= \pi_0' - \ds_i(\bpi)x_i \geq \pi_0 - \ds_i(\bpi).$
	Assume, for the purpose of a contradiction, that $x_{i'} =1$ and that $\ds_{i'}(\bpi) > 0$. Thus, $\dsone_{i'}(\bpi)\geq \bpi^\top\bx\geq  \pi_0 -\ds_i(\bpi)$. Moreover, $\dszero_{i'}(\bpi)= \pi_0$. This implies that 
	$\ds_{i'}(\bpi) = \dszero_{i'}(\bpi) - \dsone_{i'}(\bpi) \leq  \pi_0 -\pi_0 +\ds_i(\bpi) \leq \ds_i(\bpi)$ and hence $0 < \ds_{i'}(\bpi) \leq \ds_i(\bpi)$. This cannot hold since $|\ds_i(\bpi)| < |\ds_{i'}(\bpi)|$.
	
	Similarly, assume that  $\ds_{i'}(\bpi)<0$ and $x_{i'}=0$. Then, $\dszero_{i'}(\bpi)\geq \pi_0 -\ds_i(\bpi)$ and $\dsone_{i'}(\bpi)=  \pi_0$. 
	This implies that $\ds_{i'}(\bpi) = \dszero_{i'}(\bpi) - \dsone_{i'}(\bpi) \geq  \pi_0 - \ds_i(\bpi) - \pi_0 = - \ds_i(\bpi)$. Thus, $0 > \ds_{i'}(\bpi) \geq -\ds_i(\bpi)$. This contradicts $|\ds_i(\bpi)| < |\ds_{i'}(\bpi)|$.	
	\hfill $\blacksquare$
\end{proof}

Theorem \ref{theo:LiftingOrder} provides a simple choice rule based on picking $i$ with the minimum absolute d-slack. It also indicates when this rule will converge to a facet-defining inequality. We formalize it in Corollary \ref{cor:facetDefining} below, which can be derived as a direct consequence of Theorem \ref{theo:LiftingOrder}.
\begin{corollary}
	\label{cor:facetDefining}
	If $\dim(\face(\bpi)) = |\slacks^0(\bpi)|$, the sequential lifting procedure (Algorithm \ref{alg:Lifting}) with the minimum slack absolute rule produces a facet-defining inequality if, at each lifting iteration except the last, the chosen $i \in \varIndex$ is such that $|\ds_i(\bpi)| < |\ds_{i'}(\bpi)|$ for all 
	$i' \not \in \slacks^{0}(\bpi)$ ($i'\neq  i$).
\end{corollary}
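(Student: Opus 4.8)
The result is essentially an iterated application of Theorem \ref{theo:LiftingOrder}. The plan is to show, by induction on the iteration counter of Algorithm \ref{alg:Lifting}, that the equality $\dim(\face(\bpi)) = |\slacks^0(\bpi)|$ is preserved throughout the while-loop, and then to read off the facet at the step where $|\slacks^0(\bpi)|$ reaches $\dim(\conv(X)) - 1$. Assumption \ref{cond:dimension} and Lemma \ref{lem:properties_ds} will be used to certify that the face stays proper until that point, and Lemma \ref{lem:dimFace} supplies the matching upper bound $\dim(\face(\bpi)) \le |\slacks^0(\bpi)|$.

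For the induction, the base case is precisely the hypothesis $\dim(\face(\bpi)) = |\slacks^0(\bpi)|$. For the inductive step, suppose the equality holds at the current iterate $\langle \bpi, \pi_0 \rangle$ and the loop has not terminated, so $\blambda(\bpi) \neq \bm 0$. If the iteration is not the last, the corollary's condition furnishes exactly the index $i \notin \slacks^0(\bpi)$ of strictly smallest absolute slack, which is the one the minimum-absolute-slack rule selects and which is exactly the hypothesis of Theorem \ref{theo:LiftingOrder}; applying it gives $\dim(\face(\bpi')) = \dim(\face(\bpi)) + 1$ and $|\slacks^0(\bpi')| = |\slacks^0(\bpi)| + 1$, so the equality propagates and, moreover, $|\slacks^0|$ increases by exactly one at each step. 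For the last iteration, since $|\slacks^0|$ grows by exactly one at every preceding step and the loop runs while $\blambda(\bpi) \neq \bm 0$, at the start of that iteration $|\slacks^0(\bpi)| = n-1$, i.e., there is a unique coordinate $i$ with $\ds_i(\bpi) \neq 0$; then the ``unique strict minimum'' requirement holds vacuously, Theorem \ref{theo:LiftingOrder} applies again, and the invariant survives to every iterate.

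With the invariant in hand, $\dim(\face(\bpi))$ ascends by one per step, starting from the initial value $|\slacks^0(\bpi)|$ and ending (when $\blambda$ vanishes, so that $\slacks^0(\bpi) = \varIndex$) at $n$; in particular $\dim(\conv(X)) = n$ emerges, since the final face is contained in $\conv(X) \subseteq [0,1]^n$. Hence some iterate attains $\dim(\face(\bpi)) = \dim(\conv(X)) - 1$; at that iterate $|\slacks^0(\bpi)| = \dim(\conv(X)) - 1$ by the invariant, so exactly one coordinate is fixed on $\face(\bpi)$ (Lemma \ref{lem:properties_ds}), and by assumption \ref{cond:dimension} $\face(\bpi)$ misses a point of $X$, hence is a proper face of dimension $\dim(\conv(X)) - 1$ --- i.e., $\bpi^\top \bx \le \pi_0$ is facet-defining.

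The delicate parts, and where I expect the real work to lie, are twofold. First, verifying that the hypothesis of Theorem \ref{theo:LiftingOrder} genuinely holds at \emph{every} iteration: at the last iteration this rests on the ``exact unit growth'' of $|\slacks^0(\bpi)|$ (so that only one coordinate has non-zero slack there and the uniqueness requirement becomes vacuous), which in turn leans on the bound $\dim(\face(\bpi)) \le |\slacks^0(\bpi)|$ of Lemma \ref{lem:dimFace}. Second, pinning down that the rotation reaches codimension exactly one rather than over-rotating, and identifying precisely which iterate in the sequence is the facet-defining inequality the procedure is to report (the iterate just before $\blambda$ vanishes); the edge case in which the input is already facet-defining --- so the loop performs a single lifting iteration --- should be checked separately.
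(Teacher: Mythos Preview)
Your plan---iterate Theorem \ref{theo:LiftingOrder} and track the invariant $\dim(\face(\bpi))=|\slacks^0(\bpi)|$---is precisely what the paper has in mind (it offers nothing beyond ``direct consequence of Theorem \ref{theo:LiftingOrder}''). But your handling of the last iteration is circular. You assert that at the start of the last iteration $|\slacks^0(\bpi)|=n-1$ because ``$|\slacks^0|$ grows by exactly one at every preceding step and the loop runs while $\blambda(\bpi)\neq\bm{0}$.'' Growth by one at each \emph{preceding} step only yields $|\slacks^0|=k_0+(T-1)$ at the start of step $T$; it does not determine $T$. To conclude $T=n-k_0$ you would need the last step also to increment $|\slacks^0|$ by exactly one---which is what you are trying to show. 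Example \ref{exa:noFacet} already exhibits the failure: there $k_0=0$, $n=3$, the loop runs once, and at the start of that (last and only) iteration $|\slacks^0|=0\neq n-1$; the corollary's hypothesis on non-last iterations is vacuous, yet all three slacks are nonzero with equal absolute value, so no strict minimum exists and Theorem \ref{theo:LiftingOrder} cannot be invoked ``vacuously.''

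The over-rotation you flag is not an edge case but the crux. If your invariant really survived the final lift, termination would give $\dim(\face(\bpi'))=n$, hence $\face(\bpi')=\conv(X)$, and the inequality Algorithm \ref{alg:Lifting} actually returns would be trivial, contradicting the conclusion. Declaring the penultimate iterate to be the output is not what the algorithm does. What must happen for the corollary to hold is that the invariant \emph{breaks} at the last step: $|\slacks^0|$ jumps to $n$ while $\dim(\face)$ lands at $\dim(\conv(X))-1$. That is the statement one actually needs to argue, and Theorem \ref{theo:LiftingOrder} does not supply it for the final step (Example \ref{exa:noFacet} with the choice $i=1$ shows the last lift can miss $n-1$ altogether). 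The paper's one-line justification leaves this final-step analysis equally unaddressed.
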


Finally, we note that, in general, it may not be possible to achieve a facet-defining inequality. For example, all non-zero d-slacks can have the same absolute value and the cardinality of $|\slacks^0(\bpi)|$ might increase by more than one while the dimension of $\face(\bpi)$ does not  (see Example \ref{exa:noFacet}).

\subsection{Relationship with Lifting based on Disjunctive Programming}\label{sec:dp_relationship}

We now formalize the connection between our lifting methodology 
and the $n$-step lifting procedure by Perregaard and Balas \cite{perregaard2001generating} mentioned in \S\ref{sec:relatedWork}.
Assume that $X = \{A\bx \leq \bm{b}, \; \bx \in \B^n \}$ for a matrix $A$ and vector $\bm{b}$ of appropriate dimensions. We consider a relaxation of the form \ref{model:dpl} that includes one disjunctive term for each index $i \in \varIndex$ and removes all integrality constraints:
\begin{align*}	
\max_{\bx} \left\{\bm{c}^\top \bx: \; A\bx \leq \bm{b}, \; \bigvee_{i' \in \varIndex} (x_{i'} \leq 0) \vee (x_{i'} \geq 1), \; \bx \in [0,1]^n \right\}. \tag{DP-L} \label{model:dpl}
\end{align*}

Proposition \ref{prop:disjuctiveSlack} below shows that, for \ref{model:dpl}, the optimal rotation parameter in the $n$-step lifting is such that $\gamma^*= |\ds_i(\bpi)|$ when using the individual binary disjunctions as target inequalities. 

\begin{proposition} \label{prop:disjuctiveSlack}
	Suppose $\ds_i(\bpi) \neq 0$ for some $i \in \varIndex$. Then, $\gamma^* = |\ds_i(\bpi)|$  if we employ either $x_i \leq 0 $ or $x_i \geq 1$ as a target inequality in the $n$-step procedure. 
\end{proposition}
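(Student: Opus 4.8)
The plan is to reduce the disjunctive program defining $\gamma^*$ to a finite maximization over $X$, using the characterization of the $n$-step procedure recalled in \S\ref{sec:relatedWork}, and then to evaluate it by a two-case split on the value of $x_i$.

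First I would set up the reduction. Since \ref{model:dpl} imposes the $0$--$1$ disjunction for \emph{every} index $i'\in\varIndex$ together with $\bx\in[0,1]^n$, its feasible set $P_{DP}$ coincides with $X$, so $\conv(P_{DP})=\conv(X)$; in particular $\bpi^\top\bx\le\pi_0$ supports $\conv(P_{DP})$ by Assumption \ref{cond:support}. Next I would identify the admissible target. By Lemma \ref{lem:properties_ds}, if $\ds_i(\bpi)>0$ (i.e.\ $i\in\slacks^+(\bpi)$) then $x_i=0$ for all $\bx\in\face(\bpi)$, so the target that is tight on $\face(\bpi)$ is $x_i\le 0$; if $\ds_i(\bpi)<0$ (i.e.\ $i\in\slacks^-(\bpi)$) then $x_i=1$ on $\face(\bpi)$ and the tight target is $x_i\ge 1$. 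In either case Assumption \ref{cond:dimension} guarantees this target is not valid for $P_{DP}=X$, so the hypotheses of the $n$-step procedure are met and $\gamma^*$ equals the largest $\gamma$ for which the rotated inequality $(\bpi+\gamma\widetilde\bpi)^\top\bx\le\pi_0+\gamma\widetilde\pi_0$ is valid for $X$.

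It then remains to compute this largest $\gamma$. Take $i\in\slacks^+(\bpi)$ with target $x_i\le 0$, i.e.\ $\widetilde\bpi=\bm{e}_i$ and $\widetilde\pi_0=0$, so that the rotated inequality is $\bpi^\top\bx+\gamma x_i\le\pi_0$. Splitting $X$ by the value of $x_i$: on $\{\bx\in X:x_i=0\}$ it reduces to $\bpi^\top\bx\le\pi_0$, which holds by Assumption \ref{cond:support} and puts no restriction on $\gamma$; on $\{\bx\in X:x_i=1\}$, nonempty by Assumption \ref{cond:dimension}, it reads $\gamma\le\pi_0-\bpi^\top\bx$, and the tightest such constraint is $\gamma\le\pi_0-\max_{\bx\in X:\,x_i=1}\bpi^\top\bx=\pi_0-\dsone_i(\bpi)$. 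Hence $\gamma^*=\pi_0-\dsone_i(\bpi)$. Finally, since $i\in\slacks^+(\bpi)$, Lemma \ref{lem:properties_ds} gives $x_i=0$ on all of $\face(\bpi)$, so $\dszero_i(\bpi)=\pi_0$; therefore $\gamma^*=\dszero_i(\bpi)-\dsone_i(\bpi)=\ds_i(\bpi)=|\ds_i(\bpi)|$, using $\ds_i(\bpi)>0$. The case $i\in\slacks^-(\bpi)$ is symmetric: with $\widetilde\bpi=-\bm{e}_i$, $\widetilde\pi_0=-1$ the rotated inequality $\bpi^\top\bx-\gamma x_i\le\pi_0-\gamma$ is vacuous on $\{\bx\in X:x_i=1\}$ and forces $\gamma\le\pi_0-\dszero_i(\bpi)$ on $\{\bx\in X:x_i=0\}$, so $\gamma^*=\pi_0-\dszero_i(\bpi)$, and $\dsone_i(\bpi)=\pi_0$ (Lemma \ref{lem:properties_ds}) yields $\gamma^*=-\ds_i(\bpi)=|\ds_i(\bpi)|$.

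The step I expect to be the main obstacle is the reduction in the second paragraph: verifying that the $n$-step procedure genuinely applies, which requires (a) choosing, for the index $i$ at hand, the \emph{correct} of the two $0$--$1$ disjunctions as the target --- namely the one tight on $\face(\bpi)$, selected through the sign of $\ds_i(\bpi)$ via Lemma \ref{lem:properties_ds} --- and (b) checking that this target is not valid for $P_{DP}$, which uses Assumption \ref{cond:dimension}. Once the problem is recast as ``largest $\gamma$ valid over $X$,'' the computation is the short case split above and is essentially a direct reading of Definition \ref{def:slack}. A secondary point to pin down is the identification $\conv(P_{DP})=\conv(X)$ for \ref{model:dpl}; alternatively one can bypass it by working with the explicit min-formulation of $\gamma^*$ from \S\ref{sec:relatedWork}, substituting the normalization $\widetilde\pi_0 x_0-\widetilde\bpi^\top\bx=-1$, rescaling by $x_0>0$, and using the disjunction to force the rescaled point to lie in $X$.
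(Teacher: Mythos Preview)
Your proof is correct and reaches the same identity $\gamma^*=\pi_0-\dsone_i(\bpi)$ (resp.\ $\pi_0-\dszero_i(\bpi)$) as the paper, but by a slightly different route. The paper works directly with the explicit disjunctive minimization defining $\gamma^*$: it substitutes the normalization $-x_i=-1$, uses the per-index disjunction $(x_i\le 0)\vee(-x_i+x_0\le 0)$ to force $x_0\le 1$, sets $x_0=1$ (so all disjunctions collapse to $x_{i'}\in\{0,1\}$), and then evaluates the resulting minimum over $X$. You instead first argue $P_{DP}=X$, invoke the characterization of $\gamma^*$ as the largest rotation preserving validity over $\conv(X)$, and compute that maximum by a two-case split on $x_i$. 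Your route is a bit more conceptual and avoids the $x_0$-bookkeeping; the paper's is a more mechanical reduction of the stated min-program. You even note the paper's route as the ``alternative'' in your last sentence, so you have both arguments in hand.

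One small clarification worth making explicit in your write-up: the notation $\bigvee_{i'\in\varIndex}(x_{i'}\le 0)\vee(x_{i'}\ge 1)$ in \ref{model:dpl} is meant as a \emph{conjunction} of per-index $0$--$1$ disjunctions (this is what the paper's own proof uses when it deduces $x_0\le 1$ from $x_i=1$), and it is only under that reading that $P_{DP}=X$. Your reduction relies on this, so stating it removes any ambiguity.
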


\begin{proof}
	Consider the case when $i \in \slacks^{+}(\bpi)$ and let $\bm{e}_i$ be the $i$-th column of an $n \times n$ identity matrix. Since all $\bx\in \face(\bpi)$ have $x_i=0$ ,  $\widetilde{\bpi}^\top\bx = \bm{e}_i^\top\bx =  x_i \leq 0$ is an invalid target inequality for $\conv(X)$ and satisfies $\widetilde{\bpi}^\top\bx= x_i = 0$ for all $\bx\in \face(\bpi)$. The system that defines $\gamma^*$ is therefore
	\begin{align}
		\gamma^*= \min_{\bx \in [0,1]^n, x_0 \ge 0}\big\{ \; & \pi_0x_0 - \bpi^\top\bx: \; A\bx \leq \bm{b}, \; - x_i =-1, \nonumber  \\
		& \bigvee_{i' \in \varIndex} ( x_{i'} \leq 0 )\vee (-x_{i'}+ x_0\leq 0) \label{disj:dis} \bigg\}.		
	\end{align}
	
	It follows from \eqref{disj:dis} and $x_i =1$ that $x_0\leq 1$. Without loss of generality, we consider $x_0 = 1$. The system reduces to:
	\begin{align*}
	\gamma^* & = \min_{\bx,x_0}\left\{\pi_0x_0 - \bpi^\top\bx : A\bx \leq \bm{b}, \; x_i = 1,\; \bx \in \B^n, \; x_0 = 1 \right\} \\
	& = \pi_0 - \max_{\bx}\{ \bpi^\top\bx : \bx\in X, \; x_i = 1 \} 
	 = \dszero_i(\bpi)  - \dsone_i(\bpi)  = \ds_i(\bpi).
	\end{align*}
	\noindent The second to last equality comes from $i \in \slacks^{+}(\bpi)$ and  $\dszero_i(\bpi) = \pi_0$. The proof for $i \in \slacks^{-}(\bpi)$ and target inequality $\widetilde{\bpi}^\top\bx = x_i \geq 1$ is analogous. \hfill $\blacksquare$
\end{proof}

Proposition \ref{prop:disjuctiveSlack} indicates when these techniques are equivalent. By taking $\widetilde{\bpi}^\top\bx = x_i \leq 0 $ as the target inequality, we obtain  $\gamma^*= \ds_i(\bpi) > 0$. The rotated inequality $(\bpi +\gamma^*\widetilde{\bpi})^\top\bx = (\bpi + \ds_i(\bpi)\bm{e}_i)^\top\bx \leq \pi_0$ is equivalent to the lifted inequality in Theorem \ref{theo:LiftingDim}. Similarly, using target inequality $\widetilde{\bpi}^\top\bx = - x_i \leq -1 $ would result in  $\gamma^*= -\ds_i(\bpi) > 0$. Then, the rotated and lifted inequalities are equivalent, i.e.,  $\bpi +\gamma^*\widetilde{\bpi} = \bpi + \ds_i(\bpi)\bm{e}_i $ and $\pi_0 + \gamma^*\widetilde{\pi}_0=  \pi_0 + \ds_i(\bpi)$.

While the techniques are equivalent in this restricted case, our approach is valid for any binary set $X$ and, thus, can handle models where a BDD (or BDD relaxation) is a more advantageous representation in comparison to a linear description of $X$ \cite{bergman2018nonlinear}. We also note that the BDD network structure allows us to efficiently compute the disjunctive terms in a combinatorial fashion.

\section{Combinatorial Cutting-Plane Algorithm}
\label{sec:bddcut}

While the BDD-based lifting procedure developed in \S\ref{sec:lifting} can enhance inequalities from any cutting-plane methodology, 
we now exploit similar concepts to derive new valid inequalities for $X$ based on the network structure of $\bdd$. In particular, we design inequalities that separate points from $X$ by only relying on the combinatorial structure encoded by $\bdd$. Thus, no other specific structure  (e.g., linearity, submodularity, or gradient information) is required.

We assume, as before, that we are given an exact BDD $\bdd$ for $X$. Our cutting-plane method is based on an alternative linear description 
of $\bdd$ as an extended capacitated flow problem. We present this formulation in \S\ref{sec:bddPolytope} and our BDD-based cut generation linear program in \S\ref{sec:bddcut-general}. For cases where solving such model is not computationally practical, in \S \ref{sec:bddcut-simple} we develop a weaker but more efficient combinatorial cutting-plane method based on a max-flow/min-cut problem over $\bdd$. Finally, we show in \S\ref{sec:bddcut-literature} the relationship between our approach and existing BDD cutting-plane techniques \cite{davarnia2020outer,tjandraatmadja2019target}.

\subsection{BDD Polytope}
\label{sec:bddPolytope}

Existing BDD-based cut generation procedures \cite{davarnia2020outer,lozano2018binary,tjandraatmadja2019target} rely on the network-flow formulation $\flow(\bdd)$ introduced by Behle \cite{behle2007binary}, described as follows:
\begin{subequations}
\begin{align}
\flow(\bdd):= \big\{  (\bx; \by) &\in [0,1]^n\times \R_+^{|\arcs|} :  \nonumber \\
& \sum_{a\in \outgoing(u) } y_a - \sum_{a\in \incoming(u)} y_a = 0, 		&& \; \forall u \in \nodes\setminus\{\rootnode, \terminalnode\}, \label{eq:flow_all} \\ 
& \sum_{a\in \outgoing(\rootnode) } y_a = \sum_{a\in \incoming(\terminalnode)} y_a = 1, \label{eq:flow_rt}\\
& \sum_{a \in \arcs: \source(a) \in \nodes_i, \arcLabel_a = 1} y_a = x_i,  	&& \; \forall i \in \varIndex \label{eq:flow_conv} \Big \}. 
\end{align}
\end{subequations}
	
Equalities \eqref{eq:flow_all} and \eqref{eq:flow_rt} are balance-of-flow constraints over $\bdd$. Constraint \eqref{eq:flow_conv} links the arcs of $\bdd$ with solutions $\bx$. 
In particular, the polytope $\flow(\bdd)$ projected over the $\bx$ variables is equivalent to the convex hull of all solutions represented by $\bdd$, i.e., $\projx(\flow(\bdd)) = \conv(\sol)$.

{\changes
One drawback of $\flow(\bdd)$ is that constraints \eqref{eq:flow_conv} only consider flow variables associated with arc labels equal to one (i.e., $\arcLabel_a = 1$). Thus, there is no constraint explicitly limiting the flow passing through zero-value arcs. CGLPs  based on $\flow(\bdd)$ are potentially unbounded, which has been a fundamental challenge in existing works \cite{tjandraatmadja2019target,davarnia2020outer}.}

We propose an alternative formulation of $\flow(\bdd)$ that addresses its main limitations and use the reformulation to define our cutting-plane algorithms. The new formulation, here denoted by $\flowjoint(\bdd)$, corresponds to a joint capacitated network-flow polytope. The new model maintains the flow conservation constraints, \eqref{eq:flow_all} and \eqref{eq:flow_rt}, and replaces \eqref{eq:flow_conv} with \eqref{eq:flowcap_1} and \eqref{eq:flowcap_0} below. Both inequalities enforce a common capacity for arcs in a layer with the same value. Proposition \ref{prop:flow_equal} shows that the two formulations are equivalent and, thus, $\projx(\flowjoint(\bdd)) = \conv(\sol)$.
\begin{subequations}
	\begin{align}
	\flowjoint(\bdd):= \big\{  (\bx; \by) &\in [0,1]^n\times \R_+^{|\arcs|} :  \eqref{eq:flow_all}-\eqref{eq:flow_rt},  \nonumber \\ 
	& \sum_{a \in \arcs: \source(a) \in \nodes_i, \arcLabel_a =1 } y_a \leq  x_i,  		&&\forall i \in \varIndex, \label{eq:flowcap_1} \\
	& \sum_{a \in \arcs: \source(a) \in \nodes_i, \arcLabel_a =0 } y_a \leq  1 - x_i,  	&&\forall i \in \varIndex \label{eq:flowcap_0} \; \Big\}. 
	\end{align}
\end{subequations}

{\changes
Cut generation methods based on capacitated network flows over BDDs have been previously studied in the context of two-stage stochastic programs \cite{lozano2018binary}. In our model, the proposed polytope $\flowjoint(\bdd)$ is specially structured given, e.g., the use a single variable $x_i$ per layer $i$ to limit the capacity of the zero and one-value arcs. This structural property give us desired properties for separation in \S\ref{sec:bddcut-general}, and is key to when developing combinatorial cuts that do not depend on linear programs in \S\ref{sec:bddcut-simple}.
}

\begin{proposition}\label{prop:flow_equal}
	$\flowjoint(\bdd) = \flow(\bdd)$.
\end{proposition}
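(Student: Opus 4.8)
The plan is to prove set equality by double inclusion, showing that a point $(\bx; \by)$ satisfies the defining constraints of $\flow(\bdd)$ if and only if it satisfies those of $\flowjoint(\bdd)$. Since both polytopes share the flow-conservation constraints \eqref{eq:flow_all}--\eqref{eq:flow_rt}, the only work is to show that, under these conservation constraints, the equality \eqref{eq:flow_conv} is equivalent to the pair of inequalities \eqref{eq:flowcap_1}--\eqref{eq:flowcap_0}.

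First I would prove the inclusion $\flow(\bdd) \subseteq \flowjoint(\bdd)$. Take $(\bx;\by) \in \flow(\bdd)$. Inequality \eqref{eq:flowcap_1} is immediate since it is just the $\le$ relaxation of \eqref{eq:flow_conv}. For \eqref{eq:flowcap_0}, I would use the fact that every arc leaving a node in layer $\nodes_i$ has value either $0$ or $1$, so for each $i \in \varIndex$ the total outflow from layer $i$ splits as $\sum_{a : \source(a) \in \nodes_i, \arcLabel_a = 0} y_a + \sum_{a : \source(a) \in \nodes_i, \arcLabel_a = 1} y_a$. The key observation is that the total outflow from layer $i$ equals $1$: summing the conservation constraints \eqref{eq:flow_all} over all nodes in layers $\nodes_1, \dots, \nodes_i$ together with \eqref{eq:flow_rt} (a telescoping/cut argument on the layered DAG) shows the flow crossing from layer $i$ to layer $i+1$ is exactly the unit of flow sent from $\rootnode$. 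Hence $\sum_{a : \source(a) \in \nodes_i, \arcLabel_a = 0} y_a = 1 - \sum_{a : \source(a) \in \nodes_i, \arcLabel_a = 1} y_a = 1 - x_i$ by \eqref{eq:flow_conv}, which gives \eqref{eq:flowcap_0} with equality.

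Next I would prove $\flowjoint(\bdd) \subseteq \flow(\bdd)$. Take $(\bx;\by) \in \flowjoint(\bdd)$. I again invoke the cut argument: the total outflow from layer $i$ is $1$. Adding \eqref{eq:flowcap_1} and \eqref{eq:flowcap_0} gives $1 = \sum_{a : \source(a) \in \nodes_i, \arcLabel_a = 1} y_a + \sum_{a : \source(a) \in \nodes_i, \arcLabel_a = 0} y_a \le x_i + (1 - x_i) = 1$. Since the two extreme terms are equal, both \eqref{eq:flowcap_1} and \eqref{eq:flowcap_0} must hold with equality; in particular \eqref{eq:flowcap_1} at equality is exactly \eqref{eq:flow_conv}. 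Therefore $(\bx;\by)$ satisfies all constraints of $\flow(\bdd)$.

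The only slightly non-routine ingredient is the layered-cut identity "total flow crossing from layer $i$ to layer $i+1$ equals $1$," and I expect this to be the main (though still minor) obstacle to state cleanly; it follows by summing \eqref{eq:flow_all} over $\bigcup_{k \le i} \nodes_k$, noting interior arcs cancel, and using \eqref{eq:flow_rt} at $\rootnode$. Everything else is direct substitution. I would conclude by noting that since $\flow(\bdd)$ and $\flowjoint(\bdd)$ have the same constraints after this equivalence, $\flowjoint(\bdd) = \flow(\bdd)$, and hence the earlier identity $\projx(\flow(\bdd)) = \conv(\sol)$ transfers to give $\projx(\flowjoint(\bdd)) = \conv(\sol)$. \hfill $\blacksquare$
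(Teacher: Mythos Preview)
Your proposal is correct and follows essentially the same approach as the paper: both proofs use double inclusion, invoke the fact that the total flow crossing each layer equals one (what you call the layered-cut identity), and then argue that under this identity the equality \eqref{eq:flow_conv} is equivalent to the pair \eqref{eq:flowcap_1}--\eqref{eq:flowcap_0} holding with equality. The paper states the layer-flow-equals-one fact without spelling out the telescoping sum, whereas you sketch that derivation explicitly; otherwise the arguments are the same.
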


\begin{proof}
	Consider $(\bx';\by')\in \flow(\bdd)$, so $(\bx';\by')$ satisfies \eqref{eq:flow_all} and \eqref{eq:flow_rt}. Since \eqref{eq:flow_conv} holds, $(\bx';\by')$ also satisfies \eqref{eq:flowcap_1}. From the flow conservation constraints, \eqref{eq:flow_all} and \eqref{eq:flow_rt}, the flow traversing each layer $i \in \varIndex$ in $\bdd$ is exactly one, i.e., 
	\begin{align*}
	\sum_{a \in \arcs \colon \source(a) \in \nodes_i} y'_a = 1 \quad 
	&\Rightarrow \quad \sum_{a \in \arcs: \source(a) \in \nodes_i \colon \arcLabel_a = 1} y'_a + \sum_{a \in \arcs: \source(a) \in \nodes_i \colon \arcLabel_a = 0} y'_a  = 1 \\
	&\Rightarrow \quad \sum_{a \in \arcs: \source(a) \in \nodes_i \colon \arcLabel_a = 0} y'_a  = 1 - x_i'.
	\end{align*}
	\noindent Then, $ (\bx';\by')$ satisfies \eqref{eq:flowcap_0} and $(\bx';\by')\in \flowjoint(\bdd)$.
	Consider now $(\bx';\by') \in \flowjoint(\bdd)$. Since flows traversing a layer sum to one, constraints \eqref{eq:flowcap_1} and \eqref{eq:flowcap_0} are satisfied as equalities and therefore \eqref{eq:flow_conv} holds for $(\bx';\by')$.  
	\hfill $\blacksquare$
\end{proof}

\subsection{General BDD Flow Cuts}\label{sec:bddcut-general}

Our cutting-plane procedure formulates a max-flow optimization problem over $\flowjoint(\bdd)$ to identify and separate points $\bx' \not \in \conv(\sol)$, given by \eqref{eq:maxflow_general} below:
\begin{equation}	
z(\bdd; \bx'):= \max_{\by \in  \R_+^{|\arcs|} }\left\{ \sum_{a\in \outgoing(\rootnode) } y_a : \; \eqref{eq:flow_all},\eqref{eq:flowcap_1}-\eqref{eq:flowcap_0}, \bx = \bx' \right\}. \label{eq:maxflow_general}
\end{equation}
\noindent This model omits constraint \eqref{eq:flow_rt} which enforces the flow to be equal to one. We argue in Lemma \ref{lem:separation_bdd} that $z(\bdd; \bx')=1$ is a necessary and sufficient condition to check if $\bx'$ belongs to $\conv(\sol)$.

\begin{lemma}\label{lem:separation_bdd}
	$\bx'\in \conv(\sol)$ if and only if $z(\bdd; \bx')=1$.
\end{lemma}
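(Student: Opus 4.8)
The plan is to prove both directions by connecting feasible flows of value one in the capacitated model \eqref{eq:maxflow_general} with convex combinations of $\rootnode$--$\terminalnode$ paths, i.e., with points of $\conv(\sol)$.

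\textbf{Sufficiency ($z(\bdd;\bx')=1 \Rightarrow \bx'\in\conv(\sol)$).} Suppose the max-flow value is exactly one, attained by some $\by$. Since the flow entering $\terminalnode$ also equals one (by flow conservation \eqref{eq:flow_all} applied across the layers, the flow is conserved, and nothing leaves the network except through $\terminalnode$), the pair $(\bx';\by)$ satisfies \eqref{eq:flow_all} and \eqref{eq:flow_rt}, hence lies in $\flow(\bdd)=\flowjoint(\bdd)$ by Proposition \ref{prop:flow_equal}. A standard flow-decomposition argument writes $\by$ as a convex combination $\by = \sum_{\pathbdd\in\paths}\mu_\pathbdd\,\by^\pathbdd$ of unit \rt\ path flows with $\mu_\pathbdd\ge 0$, $\sum_\pathbdd \mu_\pathbdd = 1$ (there are no cycles since $\bdd$ is a layered DAG). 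Constraint \eqref{eq:flow_conv}, which holds for $(\bx';\by)$ as shown in Proposition \ref{prop:flow_equal}, then gives $x_i' = \sum_{a:\source(a)\in\nodes_i,\arcLabel_a=1} y_a = \sum_\pathbdd \mu_\pathbdd\, x^\pathbdd_i$ for each $i$, so $\bx' = \sum_\pathbdd \mu_\pathbdd\,\pathvar^\pathbdd$ is a convex combination of points of $\sol$, i.e., $\bx'\in\conv(\sol)$.

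\textbf{Necessity ($\bx'\in\conv(\sol) \Rightarrow z(\bdd;\bx')=1$).} If $\bx'\in\conv(\sol)$, write $\bx' = \sum_\pathbdd \mu_\pathbdd \pathvar^\pathbdd$ with $\mu\ge0$, $\sum\mu_\pathbdd=1$, and set $\by := \sum_\pathbdd \mu_\pathbdd \by^\pathbdd$. Then $(\bx';\by)$ satisfies all of \eqref{eq:flow_all}, \eqref{eq:flow_rt}, \eqref{eq:flow_conv}, hence lies in $\flow(\bdd)=\flowjoint(\bdd)$ and is feasible for \eqref{eq:maxflow_general} with objective value $\sum_{a\in\outgoing(\rootnode)} y_a = 1$; thus $z(\bdd;\bx')\ge 1$. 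For the reverse bound, note every feasible $\by$ of \eqref{eq:maxflow_general} has $\sum_{a\in\outgoing(\rootnode)}y_a = \sum_{a\in\outgoing(\rootnode)\colon \arcLabel_a=1}y_a + \sum_{a\in\outgoing(\rootnode)\colon \arcLabel_a=0}y_a \le x_1' + (1-x_1') = 1$ by \eqref{eq:flowcap_1}--\eqref{eq:flowcap_0} applied at layer $i=1$ (where $\outgoing(\rootnode)$ is exactly the set of arcs with source in $\nodes_1$). Hence $z(\bdd;\bx')\le 1$, giving equality.

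\textbf{Main obstacle.} The delicate point is the sufficiency direction: from $z(\bdd;\bx')=1$ we must first argue that this flow is genuinely a unit \rt\ flow — i.e., that $\sum_{a\in\incoming(\terminalnode)}y_a = 1$ as well — before invoking flow decomposition and Proposition \ref{prop:flow_equal}. This follows because conservation \eqref{eq:flow_all} at every internal node forces the total flow across consecutive layers to be equal, so the flow leaving $\rootnode$ equals the flow entering $\terminalnode$; one should state this telescoping carefully. The rest is a routine flow-decomposition argument together with Proposition \ref{prop:flow_equal} and the layer-wise capacity bound, neither of which presents real difficulty.
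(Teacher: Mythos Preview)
Your proof is correct and follows essentially the same approach as the paper: establish the upper bound $z(\bdd;\bx')\le 1$ from the layer-wise capacity constraints, and link $z(\bdd;\bx')=1$ to membership in $\flowjoint(\bdd)=\flow(\bdd)$ (hence to $\conv(\sol)$) via Proposition~\ref{prop:flow_equal}. The only difference is cosmetic: the paper invokes directly the standing fact $\projx(\flow(\bdd))=\conv(\sol)$, whereas you unfold that fact explicitly through a path-flow decomposition; the arguments are otherwise the same.
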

\begin{proof}
	Constraints \eqref{eq:flowcap_1} and \eqref{eq:flowcap_0} enforce that the flow in each layer $i$ is at most $x'_i + 1-x'_i = 1$. Thus, 
	$z(\bdd; \bx') \leq 1$. Consider $\bx' \in \conv(\sol)$. From Proposition \ref{prop:flow_equal}, there exists $\by' \in \R_+^{|\arcs|}$ such that 
	$\sum_{ a \in \outgoing(\rootnode) } y'_a = 1$ and therefore $z(\bdd; \bx')=1$. For the converse, suppose $z(\bdd; \bx')=1$. It follows that there exists  $\by' \in \R_+^{|\arcs|}$  such that $(\bx';\by') \in  \flowjoint(\bdd) = \flow(\bdd)$, so $\bx'\in \conv(\sol)$.  \hfill $\blacksquare$
\end{proof}

Our BDD-based CGLP uses the dual of \eqref{eq:maxflow_general} to separate $\bx'\not\in \conv(\sol)$. Consider $\bomega\in \R^{|\nodes|}$ and $ \bnu,\bbeta\in \R_+^n$ as the dual variables associated with constraints \eqref{eq:flow_all}, \eqref{eq:flowcap_1}, and \eqref{eq:flowcap_0}, respectively. The resulting model is
\begin{subequations}
\begin{align}
\min_{\bomega,\bnu,\bbeta} \quad& 
\sum_{i \in \varIndex} x'_i\nu_i + \sum_{i \in \varIndex}(1 - x'_i)\eta_i \nonumber \tag{BDD-CGLP} \label{model:bddcglp} \\
\textnormal{s.t.} 							 \quad& \omega_{\target(a)} - \omega_{\source(a)} + \arcLabel_a \nu_i +(1-\arcLabel_a)\eta_i \geq 0, && \forall i \in \varIndex, a \in \arcs, \source(a) \in \nodes_i, \label{eq:flow_dual1} \\
& \omega_{\target(a)} +\arcLabel_a \nu_1 +(1-\arcLabel_a)\eta_1 \geq 1, &&\forall a \in \outgoing(\rootnode), \label{eq:flow_dual2}\\
& -\omega_{\source(a)} +\arcLabel_a\nu_n +(1-\arcLabel_a)\eta_n \geq 0, &&\forall a \in \incoming(\terminalnode), \label{eq:flow_dual3}\\
& \bomega\in \R^{|\nodes|}, \; \bnu,\bbeta \in \R_+^n.\label{eq:flow_dual4}
\end{align}
\end{subequations}

Let $w(\bdd; \bx')$ be the optimal solution value of \ref{model:bddcglp}. Strong duality and Lemma \ref{lem:separation_bdd} imply that we can identify if a point $\bx'$ belongs to $\conv(\sol)$ if $w(\bdd;\bx') =1$. Furthermore, we can use the optimal solution $(\bnu^*; \bbeta^*)$ to create a valid cut when $w(\bdd;\bx') < 1$. Specifically, the cut is given by 
\begin{align}
\label{eq:sep_general}
\sum_{i \in \varIndex} x_i \nu^*_i + \sum_{i \in \varIndex}(1 - x_i) \eta^*_i \geq 1.
\end{align}

Theorem \ref{theo:bddseparation} shows that the set of all cuts of the form \eqref{eq:sep_general} describes $\conv(\sol)$.

\begin{theorem}\label{theo:bddseparation}
	Let $\Lambda(\bdd)$ be the set of extreme points of the \ref{model:bddcglp} polyhedron defined by \eqref{eq:flow_dual1}-\eqref{eq:flow_dual4}. Furthermore, let $\cutpoly_\bdd$ be the set of points $\bx \in [0,1]^n$ that satisfy \eqref{eq:sep_general} for all $(\bnu; \bbeta) \in \proj_{\bnu, \bbeta}(\Lambda(\bdd))$. Then, $\conv(\sol) = \cutpoly_\bdd$.
\end{theorem}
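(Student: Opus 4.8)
The plan is to prove the two inclusions $\conv(\sol) \subseteq \cutpoly_\bdd$ and $\cutpoly_\bdd \subseteq \conv(\sol)$ separately, leveraging Lemma~\ref{lem:separation_bdd} and LP duality throughout.

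For the first inclusion, I would argue that every inequality of the form \eqref{eq:sep_general} with $(\bnu;\bbeta)$ extracted from a feasible dual solution is valid for $\conv(\sol)$. Indeed, given any $\bx' \in \conv(\sol)$, Lemma~\ref{lem:separation_bdd} gives $z(\bdd;\bx')=1$, and by strong duality $w(\bdd;\bx')=1$, so the optimal value of \ref{model:bddcglp} at $\bx'$ equals $1$. Hence for \emph{any} feasible (in particular any extreme) $(\bomega;\bnu;\bbeta) \in \Lambda(\bdd)$ we have $\sum_{i} x'_i\nu_i + \sum_i (1-x'_i)\eta_i \ge w(\bdd;\bx') = 1$, which is exactly \eqref{eq:sep_general} evaluated at $\bx'$. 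Since this holds for all $(\bnu;\bbeta)\in\proj_{\bnu,\bbeta}(\Lambda(\bdd))$, we get $\bx' \in \cutpoly_\bdd$. (One small technical point: I should note that the objective $\sum_i x'_i\nu_i + \sum_i(1-x'_i)\eta_i$ is bounded below over the feasible polyhedron — it is, because the value equals $w(\bdd;\bx')=1$ by strong duality with the bounded primal — so the LP attains its optimum at an extreme point, and evaluating an arbitrary extreme point gives a value $\ge 1$.)

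For the reverse inclusion $\cutpoly_\bdd \subseteq \conv(\sol)$, I would take $\bx' \notin \conv(\sol)$ and exhibit a violated cut of the form \eqref{eq:sep_general} with coefficients coming from an extreme point of $\Lambda(\bdd)$. By Lemma~\ref{lem:separation_bdd}, $z(\bdd;\bx') < 1$; note $z(\bdd;\bx')$ is finite (it is at most $1$ and at least $0$, the latter since $\by = \bm 0$ is feasible), so by strong duality $w(\bdd;\bx') = z(\bdd;\bx') < 1$ and the dual \ref{model:bddcglp} attains its optimum at an extreme point $(\bomega^*;\bnu^*;\bbeta^*) \in \Lambda(\bdd)$ with objective value $w(\bdd;\bx') < 1$. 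Then $\sum_i x'_i\nu^*_i + \sum_i(1-x'_i)\eta^*_i < 1$, so $\bx'$ violates the inequality \eqref{eq:sep_general} associated with $(\bnu^*;\bbeta^*) \in \proj_{\bnu,\bbeta}(\Lambda(\bdd))$, hence $\bx' \notin \cutpoly_\bdd$. Combining the two inclusions yields $\conv(\sol) = \cutpoly_\bdd$.

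The main obstacle — really the only subtle point — is the careful handling of LP duality: I must confirm that for every $\bx' \in [0,1]^n$ the max-flow primal \eqref{eq:maxflow_general} is feasible ($\by=\bm 0$ works) and bounded ($z \le 1$ from the capacity constraints, as in the proof of Lemma~\ref{lem:separation_bdd}), so that strong duality applies and the dual optimum is attained at a vertex of the polyhedron $\{\eqref{eq:flow_dual1}\text{–}\eqref{eq:flow_dual4}\}$, which does not depend on $\bx'$. It is precisely the $\bx'$-independence of this polyhedron (and hence of $\Lambda(\bdd)$) that lets the finitely many extreme-point cuts \eqref{eq:sep_general} simultaneously describe $\conv(\sol)$. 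I would also remark that the dual feasible region is nonempty and pointed (e.g., setting $\bnu,\bbeta$ large and $\bomega$ appropriately gives a feasible point, and the nonnegativity of $\bnu,\bbeta$ together with the arc constraints rules out lines), so $\Lambda(\bdd) \neq \emptyset$ and the set of cuts is well-defined and finite.
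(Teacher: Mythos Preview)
Your proposal is correct and follows essentially the same approach as the paper: both directions hinge on Lemma~\ref{lem:separation_bdd} together with LP strong duality between \eqref{eq:maxflow_general} and \ref{model:bddcglp}. The only cosmetic difference is that you argue the reverse inclusion by contrapositive (exhibiting a violated extreme-point cut for $\bx'\notin\conv(\sol)$), whereas the paper argues it directly (if all extreme-point cuts hold then $w(\bdd;\bx')\ge 1$, hence $=1$); your version is slightly more explicit about why strong duality applies and why the dual optimum is attained at a vertex, points the paper leaves implicit.
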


\begin{proof}
	Consider a point $\bx' \in \conv(\sol)$. Lemma \ref{lem:separation_bdd} guarantees that  $z(\bdd; \bx') = w(\bdd; \bx') = 1$, so constraint \eqref{eq:sep_general} holds for any extreme point of \ref{model:bddcglp}. Now consider a point $\bx' \in \cutpoly_\bdd$. Since $\bx'$ satisfies \eqref{eq:sep_general} for all extreme points in $\Lambda(\bdd)$, we have that $w(\bdd; \bx') \geq 1$ and, thus, $w(\bdd; \bx') = z(\bdd; \bx') =1$. Finally, using Lemma \ref{lem:separation_bdd}, we have that $\bx' \in \conv(\sol)$. \hfill $\blacksquare$
\end{proof}

Thus, we solve \ref{model:bddcglp} to separate points $\bx'\notin \conv(\sol)$. The procedure returns a cut  \eqref{eq:sep_general} where $(\bnu^*; \bbeta^*)$ is the optimal solution of \ref{model:bddcglp}. 

\subsection{Combinatorial BDD Flow Cuts} 
\label{sec:bddcut-simple}

The above cutting-plane procedure requires solving a linear program with $|\arcs|$ constraints and $|\nodes| + 2n$ variables. Obtaining  $w(\bdd;\bx')$, thus, could be computationally expensive for instances where $\bdd$ is large (see \S\ref{sec:experiments}).
We propose an alternative cut-generation procedure based on \ref{model:bddcglp} that involves a combinatorial and more efficient max-flow solution over $\bdd$.

First, we consider a reformulation of $\flowjoint(\bdd)$ where the joint capacity constraints are replaced by individual constraints for each arc, i.e., a standard capacitated network flow polytope over $\bdd$:
\begin{subequations}
\begin{align}
\flowcap(\bdd):= \{ &(\bx; \by) \in [0,1]^n\times \R_+^{|\arcs|} : \eqref{eq:flow_all}-\eqref{eq:flow_rt},  \nonumber\\
& y_a \leq x_i,  \qquad\qquad\quad\;\;\; \forall a \in \arcs, \source(a) \in \nodes_i, \arcLabel_a=1, i \in \varIndex, \label{eq:rflow_cap1} \\
& y_a \leq 1- x_i,  \qquad\qquad\; \forall a\in \arcs, \source(a) \in \nodes_i, \arcLabel_a=0,  i \in \varIndex \label{eq:rflow_cap0} \}. 
\end{align}
\end{subequations}

\begin{proposition}  
	\label{prop:relaxedflow} 
	$\flowjoint(\bdd) \subseteq \flowcap(\bdd)$. Moreover, for any  integer $\bx \notin \conv(\sol)$, we have that $\bx\notin \projx(\flowcap(\bdd))$.
\end{proposition}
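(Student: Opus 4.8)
The plan is to establish the two claims separately, both resting on the same structural fact established in the proof of Proposition \ref{prop:flow_equal}: in any feasible flow over $\bdd$ satisfying the conservation constraints \eqref{eq:flow_all}--\eqref{eq:flow_rt}, the total flow crossing each layer $i$ equals exactly one. For the containment $\flowjoint(\bdd)\subseteq\flowcap(\bdd)$, I would take an arbitrary $(\bx;\by)\in\flowjoint(\bdd)$ and show each individual arc bound \eqref{eq:rflow_cap1}--\eqref{eq:rflow_cap0} holds. Since $\by\ge\bm 0$, for any arc $a$ with $\source(a)\in\nodes_i$ and $\arcLabel_a=1$ we have $y_a\le\sum_{a'\in\arcs:\source(a')\in\nodes_i,\arcLabel_{a'}=1}y_{a'}\le x_i$ by \eqref{eq:flowcap_1}; the zero-arc case is identical using \eqref{eq:flowcap_0}. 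The conservation constraints are shared by both polytopes, so $(\bx;\by)\in\flowcap(\bdd)$, giving the inclusion.

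For the second claim, let $\bx\in\{0,1\}^n$ with $\bx\notin\conv(\sol)$, and suppose for contradiction that $\bx\in\projx(\flowcap(\bdd))$, i.e., there is $\by\in\R_+^{|\arcs|}$ with $(\bx;\by)\in\flowcap(\bdd)$. The key observation is that because $\bx$ is binary, the arc capacities \eqref{eq:rflow_cap1}--\eqref{eq:rflow_cap0} become $0$/$1$ bounds: in layer $i$, every one-arc has capacity $x_i$ and every zero-arc has capacity $1-x_i$, so exactly one of these two groups has all capacities zero. Combined with the layer-crossing-flow-equals-one fact, all the flow in layer $i$ must route through arcs of value $x_i$, and since those arcs have capacity one and the total is one, no arc carries flow exceeding one. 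I would then argue that the flow decomposes into a single unit $\rootnode$-$\terminalnode$ path using only arcs consistent with $\bx$ — formally, the support of $\by$ restricted to each layer lies within the arcs labeled $x_i$, and a standard flow-decomposition argument on this acyclic unit flow produces an $\rootnode$-$\terminalnode$ path $p$ with $\arcLabel_{a_k}=x_k$ for every $k$, whence $\bx=\bx^p\in\sol\subseteq\conv(\sol)$, contradicting the hypothesis.

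The main obstacle I anticipate is making the flow-decomposition step rigorous without over-formalizing: one must confirm that the restriction of $\by$ to the value-consistent arcs is itself a feasible unit $\rootnode$-$\terminalnode$ flow (conservation is inherited because the other arcs in each layer carry zero flow), and that an acyclic network carrying one unit from source to sink necessarily admits a supporting path — here the layered, acyclic structure of $\bdd$ makes this immediate, since following any outgoing arc with positive flow from $\rootnode$ and repeating stays within the support and reaches $\terminalnode$ in exactly $n$ steps. Everything else is routine bookkeeping with the capacity inequalities; I would keep the write-up short and lean on Proposition \ref{prop:flow_equal} for the layer-flow identity rather than re-deriving it.
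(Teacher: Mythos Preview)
Your proposal is correct and follows essentially the same approach as the paper. For the second claim you argue via flow decomposition (positive unit flow on the $\bx$-consistent arcs yields a path $p$ with $\bx^p=\bx$), whereas the paper phrases the contrapositive (since no $\rootnode$--$\terminalnode$ path matches $\bx$, every path contains a zero-capacity arc, forcing $\by=\bm 0$ and violating \eqref{eq:flow_rt}); these are two presentations of the same argument.
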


\begin{proof}
	Consider $\bx' \in \flowjoint(\bdd)$. By construction, $\bx'$ satisfies \eqref{eq:flow_all}-\eqref{eq:flow_rt}. Since $\bx'$ satisfies \eqref{eq:flowcap_1} and \eqref{eq:flowcap_0}, it follows that $\bx'$ holds for \eqref{eq:rflow_cap1} and \eqref{eq:rflow_cap0}. 
	
	Now take an integer point $\bx'\notin \conv(\sol)$ and  $\by'\in \R_+^{|\arcs|}$  such that  $(\bx'; \by')$ satisfies \eqref{eq:flow_all}, \eqref{eq:rflow_cap1}-\eqref{eq:rflow_cap0}. Notice that such a $\by'$ exists (e.g., $\by'=\bm{0}$). By construction, there is no path $\pathbdd \in \paths$ associated with $\bx'$. From constraints \eqref{eq:rflow_cap1}-\eqref{eq:rflow_cap0}, in any path $\pathbdd \in \paths$ there exists an arc $a\in \pathbdd$ with capacity zero (i.e., $y_a \leq 0$). We can then deduce that $\by'=\bm{0}$, therefore $(\bx';\by')$ violates \eqref{eq:flow_rt}. Finally, for any $\bx' \in\B^n \setminus\conv(\sol)$ there is no $\by' \in \R_+^{|\arcs|}$ such that $(\bx';\by')\in \flowcap(\bdd)$. \hfill $\blacksquare$
\end{proof}

Proposition \ref{prop:relaxedflow} shows that for any integer point $\bx'$,  $\bx'\not \in \sol$ implies $\bx' \not \in \projx(\flowcap(\bdd))$.
Example \ref{exa:mf_wrong} illustrates that, conversely, there might exist fractional points $\bx'\notin \conv(X)$ such that $\bx' \in \projx(\flowcap(\bdd))$,
and hence $\flowcap(\bdd)$ is a weaker representation.

\begin{example}\label{exa:mf_wrong}
	Consider our example $X=\{ \bx\in \B^4: 7x_1 + 5x_2 + 4x_3 +x_4 \leq 8 \}$, a fractional point $\bx'=(0.4,0.6,0.4,1)$, and the exact BDD $\bdd_1$ in Figure \ref{fig:bdd-example}. It is easy to see that $\bx' \notin \conv(X)$ since $7x'_1 + 5x'_2 + 4x'_3 +x'_4 = 8.4 \geq 8$. However, there exists a $\by' \in  \R_+^{|\arcs|}$ such that $(\bx',\by') \in \flowcap(\bdd)$ with value $y_{(\rootnode,u_1)} = 0.6$, $y_{(\rootnode,u_2)} = 0.4$, $y_{(u_1,u_4)} = 0.2$, $y_{(u_1,u_3)} = 0.4$,  $y_{(u_2,u_4)} = 0.4$, $y_{(u_3,u_4)} = 0.4$, $y_{(u_4,u_5)} = 0.6$, $y_{(u_5,\terminalnode)} = 1$, and all other arcs with flow equal to zero. \hfill $\square$
\end{example}


Similar to the general BDD flow cuts, we use the dual of the max-flow version of $\flowcap(\bdd)$ to identify points that do not belong to $\conv(\sol)$. Consider $\bomega\in \R^{|\nodes|}$ as the dual variables associated with constraints \eqref{eq:flow_all} and $\balpha$ the dual variables associated with constraints \eqref{eq:rflow_cap1}-\eqref{eq:rflow_cap0}. Then, the separation problem for the alternative BDD cuts is as follow:
\begin{subequations}
\begin{align*}
\min_{\bomega, \balpha} \quad& \sum_{i \in \varIndex} \left(  \sum_{\substack{a \in \arcs: \source(a) \in \nodes_i, \arcLabel_a=1}}  x'_i \alpha_a \;\; + \;\; \!\!\!\!\!\!\sum_{\substack{a \in \arcs: \source(a) \in \nodes_i, \arcLabel_a=0}}\!\!\!\!\!\! (1 - x'_i) \alpha_a \right) \nonumber \tag{CN-CGLP} \label{model:wbddcglp} \\
\textnormal{s.t.}		\quad& \omega_{\target(a)} - \omega_{\source(a)} + \alpha_a \geq 0, 
\qquad\qquad\forall i \in \varIndex, a \in \arcs, \source(a) \in \nodes_i, 
\\
\quad& \omega_{\target(a)} +\alpha_a \geq 1, 
\;\qquad\qquad\quad\quad\quad \forall a \in \outgoing(\rootnode), 
\\
\quad& -\omega_{\source(a)} +\alpha_a \geq 0, 
\qquad\qquad\quad\quad \forall a \in \incoming(\terminalnode), 
\\
\quad& \bm{\omega}\in \R^{|\nodes|}, \; \balpha \in \R_+^{|\arcs|}.
\end{align*}
\end{subequations}

Let $w^\mathsf{r}(\bdd;\bx')$ be the optimal solution value of \ref{model:wbddcglp}. Proposition \ref{prop:relaxedflow} implies that for any $\bx' \in \conv(\sol_\bdd)$, $w^\mathsf{r}(\bdd;\bx') = 1$. It follows that inequality \eqref{eq:sep_simple} holds for any $\bx \in \conv(\sol_\bdd)$, where $\balpha^*$ is optimal to \ref{model:wbddcglp}:
\begin{align}\label{eq:sep_simple}
\sum_{i \in \varIndex}\left(\sum_{\substack{a\in \arcs: \source(a) \in \nodes_i, \arcLabel_a=1}} x_i \alpha^*_a + \sum_{\substack{a\in \arcs: \source(a) \in \nodes_i, \arcLabel_a=0}} (1 - x_i)\alpha^*_a \right) \geq 1.
\end{align}

Of important note is that \ref{model:wbddcglp} is a classical min-cut problem, i.e., we are searching for a maximum-capacity arc cut in the network that certifies that a point does not belong to the convex hull of $\sol$. While the resulting inequalities are not as strong as the general BDD cuts from \ref{model:bddcglp}, we can leverage max-flow/min-cut combinatorial algorithms to solve it more efficiently in the size of the BDD. Several algorithms are readily available to that end \cite{Ahuja1993} and provide both primal and dual solutions to \ref{model:wbddcglp}.




Furthermore, another consequence of the design of such cuts is that their strength depends on the BDD size. That is, two BDDs $\bdd$ and $\bdd'$ encoding the same set might generate different combinatorial flow cuts because of distinct min-cut solutions. We show in Theorem \ref{theo:reduced_bdd} that the reduced BDD, which is unique, generates the tightest $\flowcap(\bdd)$ formulation and is hence critical in such a formulation. We note that a reduced BDD can be generated in polynomial time in $\bdd'$ for any $\bdd'$ representing the desired solution set \cite{bryant1986graph}.

\begin{theorem}
	\label{theo:reduced_bdd}
	Let $\bddr=(\nodesr, \arcsr)$ be the reduced version of $\bdd$, i.e., $\sol_{\bddr} = \sol_\bdd$, and for each layer $i \in \varIndex$, 
	$|\nodesr_i| \le |\nodes_i|$. Then, $\flowcap(\bddr) \subseteq \flowcap(\bdd)$.
\end{theorem}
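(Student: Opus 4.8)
The plan is to prove the statement in the space of the $\bx$ variables: since $\bdd$ and $\bddr$ have different arc sets, I read $\flowcap(\bddr)\subseteq\flowcap(\bdd)$ as $\projx(\flowcap(\bddr))\subseteq\projx(\flowcap(\bdd))$, i.e. every $\bx$ that carries a feasible arc-flow in $\bddr$ also carries one in $\bdd$. First I would set up the reduction map. Regarding $\bddr$ as the image of $\bdd$ under the node-merging that produces the reduced diagram, there is a surjective homomorphism $\phi\colon\bdd\to\bddr$ that preserves layers and arc labels (reduction here only merges equivalent nodes within a layer, it never creates layer-skipping arcs). Because in this model every node has at most one emanating arc of each value, an easy induction on layers shows that distinct \rt\ paths carry distinct label sequences; combined with $\sol_{\bddr}=\sol_\bdd$, this makes $\phi$ a bijection between the \rt\ paths of $\bdd$ and those of $\bddr$. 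For an \rt\ path $p$ of $\bddr$ write $\phi^{-1}(p)$ for its unique lift, the \rt\ path of $\bdd$ with the same label sequence, and note that $\phi$ carries the layer-$i$ arc of $\phi^{-1}(p)$ to the layer-$i$ arc of $p$.

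The core of the argument is a flow pushforward. Given $(\bx;\by^{\mathsf r})\in\flowcap(\bddr)$, since $\bddr$ is acyclic with the single source $\rootnode$, the single sink $\terminalnode$, and $\sum_{a\in\outgoing(\rootnode)}y^{\mathsf r}_a=1$ by \eqref{eq:flow_rt}, standard flow decomposition gives nonnegative weights $f_p$ ($p\in\pathsr$) with $\sum_p f_p=1$ and $\by^{\mathsf r}=\sum_p f_p\,\chi^p$, where $\chi^p$ denotes the arc-incidence vector of $p$ in $\bddr$. Define $\by:=\sum_p f_p\,\chi^{\phi^{-1}(p)}$ on $\bdd$. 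As a nonnegative convex combination of \rt\ path incidence vectors, $\by$ satisfies the conservation constraints \eqref{eq:flow_all} and the flow-value constraint \eqref{eq:flow_rt} automatically, with the same $\bx$.

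Finally I would check the individual capacities \eqref{eq:rflow_cap1}--\eqref{eq:rflow_cap0}. Fix an arc $a$ of $\bdd$ with source in layer $i$. If $a\in\phi^{-1}(p)$ then the layer-$i$ arc of $p$ is $\phi(a)$, so $p$ traverses $\phi(a)$; hence $y_a=\sum_{p\,:\,a\in\phi^{-1}(p)}f_p\le\sum_{p\,\ni\,\phi(a)}f_p=y^{\mathsf r}_{\phi(a)}$. Since $\phi$ preserves labels and source layers, $\arcLabel_a=1$ gives $y^{\mathsf r}_{\phi(a)}\le x_i$ from \eqref{eq:rflow_cap1} applied in $\bddr$, hence $y_a\le x_i$; and $\arcLabel_a=0$ gives $y^{\mathsf r}_{\phi(a)}\le 1-x_i$ from \eqref{eq:rflow_cap0}, hence $y_a\le 1-x_i$. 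Therefore $(\bx;\by)\in\flowcap(\bdd)$, which proves the inclusion. I expect the only delicate point to be the bookkeeping around the reduction map — that reduction is layer- and label-preserving, is confluent (so $\phi$ is well defined on arcs), and acts bijectively on \rt\ paths, so that $\phi^{-1}(p)$ is well defined and $\phi(a)$ is the right layer-$i$ arc of $p$; once this is in place, the decomposition/pushforward part is routine. A more pedestrian alternative is to treat reduction as a sequence of elementary merges of two equivalent nodes $u_1,u_2\in\nodes_i$ and argue that any $\flowcap$-feasible flow in the merged diagram can be split between the two copies of each merged outgoing arc without violating the (identical) individual capacities, concluding by transitivity of $\subseteq$; I would use whichever of the two is cleaner to typeset.
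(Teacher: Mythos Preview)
Your proposal is correct and follows essentially the same route as the paper. Both arguments hinge on the path bijection between $\bdd$ and $\bddr$ together with the layer- and label-preserving arc map $\phi$, and both reduce to the observation that the set of paths traversing an arc $a$ of $\bdd$ is contained in the set of paths traversing $\phi(a)$ in $\bddr$; the paper phrases this directly in a path-variable formulation $\flowcap^P$ (showing the $\bddr$ capacity constraints dominate those of $\bdd$), whereas you perform explicit flow decomposition and pushforward, but the content is the same.
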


\begin{proof}
	Consider $\pathsr$ to be the set of \rt\ paths in $\bddr$.
	First, $\sol_{\bddr} = \sol_\bdd$ implies that, for any \rt\ path $p \in \paths$, there exists a unique \rt\ path $p'\in \pathsr$ such that $\pathvar^p = \pathvar^{p'}$. Thus, 
	we will consider that the set of paths in both BDDs are equivalent, i.e., $\pathsr=\paths$.
	
	Let $\arcs_i = \{ a \in \arcs \colon \source(a) \in \nodes_i \}$ and $\arcsr_i = \{ a \in \arcsr \colon \source(a) \in \nodesr_i \}$.
	Since $\bddr$ is unique, there exists a unique surjective function $f_i: \arcs_i \rightarrow \arcsr_i$ that maps arcs from $\bdd$ to $\bddr$ for each layer $i \in \varIndex$.  Thus, for every arc $a\in \arcsr_i$,  let us define the pre-image of $f_i$ as  $f^{-1}_i(a):=\{a' \in \arcs_i:\; f(a') = a \}$, i.e., the subset of arcs in $\arcs_i$ that map to arc $a\in \arcsr_i$. 
	Next, denote by $\Gamma(\bdd; a):=\{ \pathbdd \in \bdd: a\in \pathbdd \}$ the set of paths in a BDD that traverse an arc $a \in \arcs$. From the construction procedure of $\bddr$ given $\bdd$ \cite{bryant1986graph}, $\Gamma(\bddr; a) =\bigcup_{a' \in f^{-1}_i(a)} \Gamma(\bdd; a')$ for all $a\in \arcsr_i$, i.e., the set of \rt\ paths passing through $a$ is equivalent to the set of \rt\ paths passing through all the arcs in $f^{-1}_i(a)$.
	
	Now consider the path formulation of $\flowcap(\bdd)$, $\flowcap^P(\bdd)$. It suffices to show that $\flowcap^P(\bddr)\subseteq\flowcap^P(\bdd)$. Since the paths in $\bdd$ and $\bddr$ are equivalent, we will consider equivalent variables $\bw$ for $\flowcap^P(\bddr)$ and $\flowcap^P(\bdd)$.
	\begin{subequations}
		\begin{align}
		\flowcap^P(\bdd):= \{ &(\bx; \bw)  \in [0,1]^n\times \R_+^{|\sol(\bdd) |} : &  \nonumber\\
		& \sum_{\pathbdd \in \paths: a\in \pathbdd}w_\pathbdd \leq x_i,  & \forall a\in \arcs_i, \arcLabel_a=1, i \in \varIndex, \;\;\label{eq:path_cap1} \\
		& \sum_{\pathbdd \in \paths: a\in \pathbdd}w_\pathbdd  \leq 1- x_i,  & \forall a\in \arcs_i, \arcLabel_a=0,  i \in \varIndex \label{eq:path_cap0} \}. 
		\end{align}
	\end{subequations}
	
	Using the path equivalence  $\Gamma(\bddr; a) =\bigcup_{a' \in f^{-1}_i(a)} \Gamma(\bdd; a')$ for any $a\in \arcsr_i$ and $i \in \varIndex$, we have that
	\[  \sum_{\pathbdd \in \pathsr: a\in \pathbdd}w_\pathbdd  = \sum_{a'\in f^{-1}_i(a)\; } \sum_{\pathbdd \in \paths: a'\in \pathbdd}w_\pathbdd, \qquad \forall a \in \arcsr_i,\; i \in \varIndex. \]
	
	\noindent Thus, constraints \eqref{eq:path_cap1} and \eqref{eq:path_cap0} of $\flowcap^P(\bddr)$ are tighter since they restrict more paths than for the case of $\flowcap^P(\bdd)$. This implies that, for any $(\bx';\bw') \in \flowcap^P(\bddr)$, $(\bx';\bw') \in \flowcap^P(\bdd)$. 
	\hfill $\blacksquare$
\end{proof}

Theorem \ref{theo:reduced_bdd} indicates that the reduced BDD can separate more points than any other BDD representing the same solution set. We also note in passing that the variable ordering plays a role on the size of the BDD and, hence, on the effectiveness of the combinatorial BDD flow cuts. Investigating variable orderings for specific problem classes and how they impact the cuts (theoretically and computationally) may lead to new research avenues.

\subsection{Relationship with Existing BDD Cut Generation Procedures} 
\label{sec:bddcut-literature}


The two existing BDD-based CGLPs rely on dual reformulations of $\flow(\bdd)$, and, thus, also describe $\conv(\sol)$  \cite{tjandraatmadja2019target,davarnia2020outer}. These techniques rely on additional information: Tjandraatmadja et al. (2019) \cite{tjandraatmadja2019target} CGLP requires an interior point of $\sol$ and Davarnia et al. (2020) \cite{davarnia2020outer} must incorporate possibly non-linear normalization constraints. In contrast, \ref{model:bddcglp} exploits the structure of $\bdd$ directly to describe $\conv(\sol)$. We now detail these two BDD-based CGLPs and highlight the main theoretical differences to \ref{model:bddcglp}. 

Consider $\bomega\in \R^{|\nodes|}$ as the dual variables associated with constraints \eqref{eq:flow_all} and \eqref{eq:flow_rt}, and $\btheta \in \R^{n}$ as the dual variables associated with \eqref{eq:flow_conv}. The two BDD-based CGLP models employ flow inequalities of the form
\begin{equation}\label{eq:flow_dual}
\omega_{\target(a)} - \omega_{\source(a)} + \theta_i \arcLabel_a \geq 0,  \qquad \forall i \in \varIndex, a \in \arcs, \source(a) \in \nodes_i.
\end{equation}

\noindent Notice that \eqref{eq:flow_dual} resembles the flow inequalities \eqref{eq:flow_dual1}-\eqref{eq:flow_dual3} of \ref{model:bddcglp}. However, our flow constraints use two sets of positive dual variables for each BDD layer (i.e., $\bnu,\bbeta\in \R_+^n$) instead of the single unbounded set of variables $\btheta \in \R^{n}$. This difference emerges because \eqref{eq:flow_conv} only bounds the arc flow variables $\by\in \R_+^{|\arcs|}$ with value $v_a =1$, while our joint-capacity constraints \eqref{eq:flowcap_1}-\eqref{eq:flowcap_0} bound all variables $\by$. This is one reason, e.g., why \ref{model:bddcglp} does not require any normalization as in previous techniques.

Tjandraatmadja et al. (2019) \cite{tjandraatmadja2019target} propose a BDD-based CGLP \eqref{model:cglp_target} to generate target cuts that are facet-defining. Their CGLP yields a valid inequality that intersects the ray passing through an interior point $\bm{u} \in \conv(\sol)$ and the fractional point $\bx'\in [0,1]^n$ to be cut-off. The procedure returns a cut $\btheta^{*\top} \bx' \leq  1+ \btheta^{*\top} \bm{u}$ whenever the optimal value of \eqref{model:cglp_target} is greater than one. 
\begin{equation}\label{model:cglp_target}
\max_{\bomega, \btheta}\left\{ \btheta^\top( \bx' - \bm{u} ) : \; \eqref{eq:flow_dual}, \; \omega_\terminalnode = 0, \; \omega_\rootnode = 1+ \btheta^\top\bm{u}\right\}.
\end{equation}

Davarnia et al. (2020) \cite{davarnia2020outer} circumvent the need of an interior point by proposing a simpler but possibly non-linear BDD-based CGLP presented in \eqref{model:cglp_gradient}. The model checks if $\bx'$ can be represented as a linear combination of points in $\sol$,  i.e., whether there exists $\btheta,\bomega$ such that $\btheta^\top \bx' = \omega_\terminalnode$. Otherwise, their procedure returns a valid inequality $\btheta^{*\top} \bx \leq \omega_\terminalnode^*$,
which is not necessarily facet-defining. Since the model may be unbounded, the optimization problem \eqref{model:cglp_gradient} includes normalization constraints $\mathcal{C}(\bomega,\btheta)\leq 0$ which are potentially non-linear. The CGLP \eqref{model:cglp_gradient} is addressed by an iterative subgradient algorithm.
\begin{equation}\label{model:cglp_gradient}
\max_{\bomega, \btheta}\left\{ \btheta^\top \bx' - \omega_\terminalnode : \; \eqref{eq:flow_dual}, \; \omega_\rootnode = 0, \; \mathcal{C}(\bomega,\btheta)\leq 0 \right\}.
\end{equation}

Note that, in our approach, we either solve \ref{model:bddcglp} (a linear program) or a single max-flow/min-cut problem, both relying only on $\bdd$. We compare our cutting-plane approaches with these procedures in \S \ref{sec:experiments}.


\section{\changes Case Study: Second-order Cone Programming}
\label{sec:soc}

For our numerical evaluation, we apply our combinatorial cut-and-lift procedure to binary problems with SOC inequalities. Recall that problem \ref{eq:soc_model} considers a linear objective and $m$ constraints of the form
\begin{align}\label{eq:soc}
\ba^\top\bx + || D^\top \bx - \bm{h} ||_2 \leq b \quad \Leftrightarrow \quad \ba^\top\bx + \sqrt{ \sum_{k \in \{1,...,l\}}(\bd_k^\top\bx - h_k)^2  } \leq b,
\end{align}
\noindent where $\ba, \bm{h} \in \R^n$ are real vectors, $D \in \R^{l \times n}$ is a matrix with $l$ rows of dimension $n$, and $b \in \R$ is the right-hand-side constant.

We propose a novel BDD encoding for the general SOC inequalities \eqref{eq:soc} using a  
recursive reformulation \cite{bergman2018nonlinear}. Our formulation considers $l+1$ sets of state variables, $\bQ_0, \bQ_1,..., \bQ_l$, where each set of variables has $n+1$ stages, i.e.,  $\bQ_k \in \R^{n+1}$ for each $k \in \{0,1,...,l\}$. State variables $\bQ_0$ represent the value of the linear term (i.e., $\ba^\top\bx$), while $\bQ_k$ encodes the $k$-th linear expression in the quadratic term (i.e., $\bd_k^\top\bx - h_k$). The recursive model for \eqref{eq:soc} is given by
\begin{subequations}
	\begin{align}
		\mbox{RSOC}:=\big\{&(\bx;\bm{Q})\in \B^n\times \R^{(l+1)\times(n+1)}: \nonumber  \\
		&Q_{0,0} = 0, \; Q_{k,0} = h_k, & \forall k \in \{1,...,l\}, \label{eq:rec_ini} \\
		&Q_{0,i} = Q_{0,i-1} + a_{i} x_i,  & \forall i \in \varIndex,  \label{eq:rec_mu}\\
		&Q_{k,i} = Q_{k,i-1} + d_{ki} x_i,  & \forall  i \in \varIndex,\; k \in \{1,...,l\}, \label{eq:rec_sigma} \\
		&Q_{0,n} + \sqrt{\sum_{k\in \{1,...,l\}} \left(Q_{k,n}\right)^2 } \leq b  &\bigg\}.	\label{eq:rec_b}
	\end{align}
\end{subequations}

\noindent The first set of equalities \eqref{eq:rec_ini} initialize  the state variables at stage 0. Equalities \eqref{eq:rec_mu} and \eqref{eq:rec_sigma} correspond to the recursive formulas for each linear expression, and constraint \eqref{eq:rec_b} enforces the SOC inequality. Notice that $\projx(RSOC)$ is equivalent to the feasible set of the SOC inequality \eqref{eq:soc}. 

An exact BDD $\bdd=(\nodes,\arcs)$ is the reduced state-transition graph of the dynamic programs above, where each BDD node maps to a state variable and each arc represents a  state transition. If we consider, for example, the RSOC model, the root node $\rootnode$ stores the stage-0 values, and each node $u$ in  layer $i\in I$ corresponds to a reachable state from the $(i-1)$-th stage. The recursions \eqref{eq:rec_mu} and \eqref{eq:rec_sigma} are used to compute the transitions between nodes. 

The proposed recursive model (and thereby the BDD) can be used for any type of SOC inequality. For our numerical evaluation, we consider two classes of SOC inequalities commonly found in the literature. The first is defined by SOC knapsack inequalities \cite{atamturk2009submodular,joung2017lifting}, i.e., where $D\in \R^{n\times n}$ is a diagonal matrix and the SOC constraint is given by
\begin{equation}\label{eq:soc_k}
	\ba^\top \bx + \Omega\sqrt{ \sum_{i\in \varIndex}d_{ii}^2 x_i  } \leq b.
\end{equation}

We develop a simpler recursive model for \eqref{eq:soc_k} with only two sets of state variables, $\bQ_0$ and $\bQ_1$. As before, $\bQ_0$ represents the linear term and $\bQ_1$ encodes the linear term inside the square root. Thus, the recursive model is given by
\begin{align*}
\mbox{RSOC-K}:=\big\{&(\bx;\bm{Q})\in \B^n\times \R^{2\times(n+1)}: \; \eqref{eq:rec_ini}, \eqref{eq:rec_mu},\\
	&Q_{1,i} = Q_{1,i-1} +d_{ii}^2 x_i,  \;\; \forall  i \in \varIndex, \quad 
	Q_{0,n} + \Omega\sqrt{\sum_{i \in \varIndex} Q_{1,n} } \leq b  &\bigg\}.
\end{align*}

For our second class, we consider SOC inequalities derived from chance constraints of the form $\prob(\bm{\xi}^\top \bx \leq b) \geq \epsilon$ where $\bm{\xi}$ is a random variable with normal distribution $N(\ba, D)$ and $\epsilon \in [0.5 ,1]$ \cite{van1963minimum,lobo1998applications}. Specifically, the constraint can be reformulated as
 \begin{equation} \label{eq:cc_general}
 	\ba^\top \bx + \mathrm{\Phi}^{-1}(\epsilon) || D\bx ||_2 \leq b \quad \Leftrightarrow \quad \ba^\top \bx + \Omega\sqrt{ \sum_{k \in \{1,...,n\}}(\bd_k^\top\bx)^2  } \leq b,
 \end{equation} 
where we use $\Omega =\mathrm{\Phi}^{-1}(\epsilon)$ for simplicity. Notice that \eqref{eq:cc_general} is a special case of $\eqref{eq:soc}$ where $D$ is square matrix and $\bm{h} = \bm{0}$. 



An exact BDD for both problem classes can grow exponentially large. Therefore, we construct relaxed BDDs using a standard incremental refinement procedure \cite{bergman2016discrete}. For completeness, we provide a detailed explanation of our BDD construction procedure in Appendix \ref{appendix:soc-bdd}.

\section{\changes Empirical Evaluation and Discussion} \label{sec:experiments}

This section presents an empirical evaluation of our combinatorial cut-and-lift procedure for \ref{eq:soc_model} (see \S \ref{sec:soc}). We create a BDD for each of the $m$ SOC inequalities and apply our procedure for each such constraint at the root node of the branch-and-bound tree. For any fractional point $\bx\in [0,1]^n$,  we iterate over each BDD until one of them generates a cut, as we describe in detail below. We then lift the inequality using Algorithm \ref{alg:Lifting}. The procedure ends when $\bx$ cannot be cut-off by any BDD. 

\medskip
\noindent \textit{Datasets.} We test our approach on the SOC knapsack (SOC-K) benchmark \cite{atamturk2009submodular,joung2017lifting} composed of 90 instances with 
$n \in \{100,125,150\}$ variables and $m \in \{10, 20\}$ constraints. We also generate a random set of instances (SOC-CC) for the more general SOC inequalities derived from chance constraints (i.e., inequality \eqref{eq:cc_general}) following a similar procedure to the one used for SOC-K. We consider $n \in \{75,100,125\}$, $m \in \{10, 20\}$, $\Omega \in \{1,3,5\}$, and a density of $2/\sqrt{n}$ over all the constraints. Parameters $\ba_j$, $D_j$, and $\bm{c}$ are sampled from a discrete uniform distribution with $\ba_j\in [-50,50]^n$, $D_j \in [-20, 20]^{n\times n}$, and $\bm{c}\in [0,100]^n$. Parameters $b_j$  are given by
\[ b_j = t\cdot \left( \sum_{i \in \varIndex} a_{ji}^+  + \Omega\sqrt{ \sum_{i \in \varIndex} \max\left\{ \sum_{k \in \varIndex} d_{jik}^+, \sum_{k \in \varIndex} d_{jik}^- \right\}^2 } \right), \qquad \forall j \in \{1,...,m\}, \]

\noindent where $t \in \{0.1, 0.2, 0.3\}$ is the constraint tightness, $f^+:=\max\{0, f\}$, and $f^-:=\max\{0, -f\}$ for any $f\in \R$. Note that $b_j$ with $t=0.3$ will remove approximately 50\% of the possible assignments for $\bx \in \B^n$. Then, we generate 5 random instances for each parameter combination, 
i.e., 270 instances.

\medskip
\noindent \textit{Benchmarks.} We implement four basic variants of our approach to assess the BDD cuts and the lifting procedure. \bddFlow\ computes the low-complexity combinatorial flow cuts in \S\ref{sec:bddcut-simple}, while \bddGeneral\ also compute these cuts and, if the approach fails to produce any cut, it applies BDD flow cuts derived from our proposed CGPL in \S \ref{sec:bddcut-general}.
The two other variants, \bddGeneralLift\ and \bddFlowLift\, are the respective versions of these cutting algorithms augmented with the proposed BDD lifting from \S\ref{sec:lifting} for every new constraint generated.


Moreover, we implement the two most recent BDD-based cuts, namely target cuts (\bddTarget) \cite{tjandraatmadja2019target} and projected cuts (\bddProject)  \cite{davarnia2020outer}. 
As before, we use the suffix \lift to denote if we apply lifting. Lastly, we implement the cover cuts and lifting procedure by Atamt{\"u}rk and Narayanan (2009) \cite{atamturk2009submodular} for the SOC-K dataset, here denoted by \cover\ and \coverLift\ for the version without lifting and with their lifting. We also test their cover cuts in conjunction with our BDD lifting, \bddCover. Finally, we evaluate combinations of different cut classes, which we will define in each relevant section.

The procedures are implemented in \texttt{C++} in the IBM ILOG CPLEX 12.9 solver \cite{CPLEXManual} using the \texttt{UserCuts} callback at the root node of the search.\footnote{We will make the code available.}  All experiments consider a single thread, a one-hour time limit, and the linearization strategy (i.e., \texttt{MIQCPStrat = 2}) to solve the SOC problems.\footnote{Numerical testing suggested that this was the best strategy across all techniques.} We deactivate all solver cuts when running the BDD techniques (i.e., \bddFlow, \bddGeneral, \bddTarget, \bddProject) and the cover-cut variants (i.e., \cover, \coverLift, and \bddCover) to evaluate their effectiveness on their own. Notice that, given the \texttt{UserCuts} callback, our techniques omit the \texttt{Presolve} option and use \texttt{TraditionalSearch}. For a fair comparison, when running unaugmented \cplex, we use the same configuration except with the solver cuts activated .\footnote{Numerical testing suggested that \cplex\ performs better on the problem sets when \texttt{Presolve} is \emph{deactivated}.}

We experimented with three BDD widths $\maxwidth \in \{2000,3000,4000\}$, i.e., the maximum number of nodes per layer allowed in a relaxed BDD. We present results for the width with the best overall performance, $\maxwidth=4000$ (we include aggregated results for other widths in Appendix \ref{appendix:width}). Notice that most of the created BDDs are relaxed due to width limit, especially when  $n\geq 100$. 

\subsection{Effectiveness of BDD-based Lifting Procedure}

We first evaluate our lifting algorithm over the BDD-based cuts and other cutting-plane procedures. Table \ref{tab:root_info} shows the average root node information for our two datasets, i.e., optimality gap, time, number of cuts, percentage of inequalities lifted at least once, and time to solve the separation problem.  The first three columns are divided into two sub-columns, the first showing results without lifting (\nlift) and the second with lifting (\lift). Column \lift\ refers to our BDD-based lifting, except in row  \cover\ that shows the continuous lifting \cite{atamturk2009submodular}. Appendices \ref{appendix:allresutls_knapsack} and \ref{appendix:allresutls_general} present detailed results for each dataset.

\begin{table}[tbp]
	\setlength{\tabcolsep}{4pt}
	\footnotesize
	\centering
	\caption{{\changes Aggregated root information for the SOC-K and SOC-CC datasets.} }
	\begin{tabular}{ll|rr|rr|rr|r|r}
		\toprule
		& & \multicolumn{2}{c|}{Root Gap} & \multicolumn{2}{c|}{Root Time (s)} & \multicolumn{2}{c|}{\# Cuts} & \% Lift & Sep.(s) \\
		\midrule
		& & \nlift    & \lift     & \nlift    & \lift     & \nlift    & \lift     &       &  \\
		\midrule
		\multirow{7}[8]{*}{ \scriptsize{\textbf{SOC-K}} } &\cplex & 2.8\% & -     & 2.4   & -     & 123.5 & -     & -     & - \\
		&\cover     & 3.3\% & 2.8\% & \textbf{1.3} & 1.8   & 96.4  & 95.7  & 98.6\% & \textbf{0.001} \\
		&\bddCover   & -     & 2.7\% & -     & 11.6  & -     & 81.0  & 70.0\% & \textbf{0.001} \\
		\cmidrule{2-10}
		&\bddFlow    & 3.6\% & 2.8\% & 20.0  & 12.8  & 240.7 & 54.9  & 99.2\% & \textbf{0.001} \\
		&\bddGeneral    & \textbf{1.3\%} & \textbf{1.3\%} & 400.4 & 109.4 & 1087.2 & 191.1 & 80.7\% & 0.442 \\
		\cmidrule{2-10}
		& \bddProject    & 3.9\% & 3.76\% & 16.6  & 16.5  & 8.0   & 6.5   & 72.2\% & 0.020 \\
		&\bddTarget    & \textbf{1.3\%} & -     & 118.0 & -     & 108.1 & -     & -     & 0.329 \\
		\midrule
		\midrule
		\multirow{5}[6]{*}{\scriptsize{\textbf{SOC-CC}}} & \cplex & 20.4\% & -     & \textbf{9.1} & -     & 127.6 & -     & -     & - \\
		\cmidrule{2-10}
		&\bddFlow    & 19.5\% & 15.4\% & 17.1  & 16.1  & 31.1  & 23.3  & 90.7\% & \textbf{0.001} \\
		&\bddGeneral    & \textbf{13.0\%} & \textbf{13.0\%} & 144.7 & 85.0  & 305.7 & 124.5 & 72.6\% & 0.096 \\
		\cmidrule{2-10}
		& \bddProject    & 17.6\% & 16.3\% & 25.3  & 18.0  & 61.3  & 20.5  & 71.4\% & 0.017 \\
		&\bddTarget    & 13.3\% & -     & 71.4  & -     & 98.1  & -     & -     & 0.102 \\
		\bottomrule
	\end{tabular}%
	\label{tab:root_info}%
\end{table}%

Our lifting procedure significantly reduces the root gap and time for \bddFlow. We observe a similar behavior for \bddProject\ but with a smaller impact due to fewer added cuts. \bddGeneral\ also benefits from our lifting, reducing the number of cuts added and, as a consequence, the root node time. However, there is no root gap improvement for \bddGeneralLift\ since the general BDD cuts separate all infeasible fractional points from each BDD.  Lastly, our lifting has no impact on \bddTarget\ since these cuts are facet-defining and, thus, we omit this variant.

Our BDD lifting also has a positive impact when applied to cuts obtained from other techniques. For the cover cut \cover, our lifting achieves a smaller root gap than the continuous lifting (i.e., 2.67\% v.s. 2.81\%) and adds fewer valid inequalities. Lastly, our BDD lifting is faster than the continuous lifting (0.001 seconds vs. 0.006 seconds per individual cut). This is as expected given that the continuous cuts require solving a linear program.

\subsection{BDD-based Cutting Planes Comparison}

Table \ref{tab:root_info} also highlights the root performance differences between the BDD cuts. As expected, the complete methods based on a CGLP (i.e., \bddGeneral, \bddGeneralLift, and \bddTarget) achieve the lowest root gap. However, these techniques are computationally expensive since they solve LPs with as many variables as arcs in the BDD. In total time, \bddTarget\ is more efficient than \bddGeneral\ because it adds fewer cuts, possibly due to the fact that its inequalities are facet-defining. This difference is partially mitigated by our BDD lifting since \bddGeneralLift\ has similar average performance to \bddTarget.

The low-complexity BDD cuts (i.e., \bddFlow\ and \bddProject) are orders of magnitude faster but have a larger root gap than the CGLP-based alternatives. As discussed in \S\ref{sec:bddcut-simple}, \bddFlow \ and \bddFlowLift\ solve a min-cut problem over the BDD, which explains the fast separation time, but might not remove all infeasible fractional points. \bddProject\ is a complete algorithm but its subgradient routine struggles to separate points close to the convex hull \cite{davarnia2020outer}, which explains its poor root gaps. Also, \bddProject\ is 15 to 20 times slower than \bddFlow\ on average because it requires several subgradient iterations to derive an inequality. 

\begin{figure}[tb]
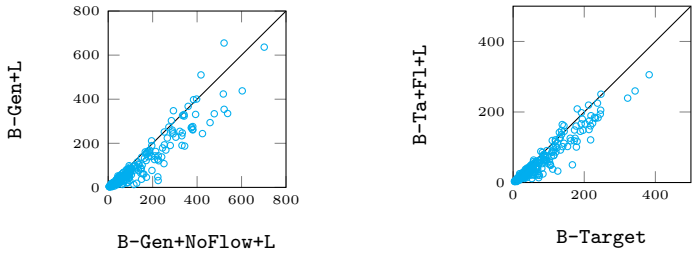

	\centering
	\scatter{Figures/plots/root_time_general.txt}{0}{800}{BGL}{BGWL}{\bddGeneralNoFlow}{\bddGeneralLift}
	\hspace{3em}
	\scatter{Figures/plots/root_time_general.txt}{0}{500}{BT}{BT-F}{\bddTarget}{\bddTargetFlow}
	\caption{\changes Root time influence (in seconds) of combinatorial BDD flow cuts for SOC-CC instances.  }\label{fig:time_root_cglp}
\end{figure}

Lastly, we evaluate the impact of combining the low-complexity BDD cuts with the CGLP variants; i.e., the CGLP is invoked after no more fast cuts can be added. Figure \ref{fig:time_root_cglp} depicts two plots where each $(x,y)$ point represents the root time for an instance given by the $x$-axis and the $y$-axis techniques. \bddTargetFlow\ employs target and combinatorial flow cuts together, while \bddGeneralNoFlow\ represents a pure general BDD cut approach. In both cases, all methods have approximately the same root gap. We observe that adding the low-complexity cuts improve the root time performance of the methods, as they decrease the number of calls to the corresponding CGLP.

\subsection{Solution Performance - Cuts at the Root Node}

We now present in Table \ref{tab:overall} the average solution performance of the tested techniques when cuts are added only at the root node. The first two columns present the number of instances solved to optimality and the average final gap. The next columns show the average solution time and nodes explored for only the instances that all techniques solved. 

We observe that \bddGeneralLift, \bddTarget, and \bddTargetFlow\ are the best performing techniques, the latter two solving all instances in SOC-K. These algorithms have small performance differences that are explained by the root information in Table \ref{tab:root_info}, where \bddTarget\ is slightly more  efficient than \bddGeneralLift. In terms of time, \bddTargetFlow\ is the fastest approach and explores the fewest number of nodes. We note, however, that \bddTarget\ is effective for SOC-CC and solved more instances in that benchmark, albeit more slowly than \bddTargetFlow.

\begin{table}[tbp]
	\footnotesize
	\setlength{\tabcolsep}{4pt}
	\centering
	\caption{ {\changes Aggregated results showing the overall performance of each technique.}}
	\begin{tabular}{l|rrrr||rrrr}
		\toprule
		& \multicolumn{4}{c||}{\textbf{SOC-K  Dataset}} & \multicolumn{4}{c}{\textbf{SOC-CC  Dataset}} \\
		\midrule
		& \# Solv & Gap & Time & \# Nodes & \# Solv & Gap & Time & \# Nodes \\
		\midrule
		\cplex & 70    & 0.39\% & 167.8 &      155,856.4  & 137   & 7.46\% & 530.2 &    169,802.5  \\
		\cover     & 71    & 0.41\% & 209.9 &      446,171.5  & -     & -     & -     & - \\
		\coverLift    & 70    & 0.34\% & 149.3 &      303,020.2  & -     & -     & -     & - \\
		\bddCover   & 70    & 0.27\% & 109.6 &      191,505.4  & -     & -     & -     & - \\
		\midrule
		\bddFlow    & 64    & 0.52\% & 458.2 &      867,798.4  & 135 &	7.01\% &	367.4 &	 214,629.4  \\
		\bddFlowLift   & 73    & 0.27\% & 139.6 &      270,937.6  & 149 &	5.55\%	& 240.3	& 121,800.3  \\
		\bddGeneral   & 76    & 0.16\% & 441.1 &        38,879.7  & 162   & 4.31\% & 172.4 &      56,490.3  \\
		\bddGeneralLift  & 88    & \textbf{0.01\%} & 126.9 &        32,289.6  & 167   & 4.18\% & 142.0 &      54,563.6  \\
		\midrule
		\bddProject    & 68    & 0.47\% & 422.7 &   1,019,999.4  & 136   & 6.63\% & 380.0 &    208,567.2  \\
		\bddProjectLift   & 66    & 0.50\% & 335.1 &      846,437.0  & 139   & 6.34\% & 366.5 &    202,853.6  \\
		\bddTarget    & \textbf{90} & \textbf{0.01\%} & 131.2 &        28,031.5  & \textbf{172} & \textbf{4.04\%} & 132.9 &      51,319.9  \\
		\bddTargetFlow  & \textbf{90} & \textbf{0.01\%} & \textbf{83.5} & \textbf{       24,685.9} & 168   & 4.19\% & \textbf{110.0} & \textbf{     45,979.1} \\
		\bottomrule
	\end{tabular}%
	\label{tab:overall}%
\end{table}%

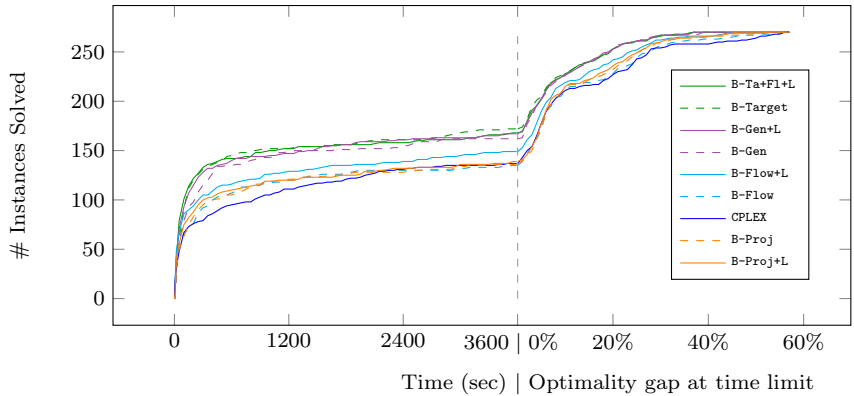
\begin{figure}[b]
	\centering
	\begin{tikzpicture}
		\begin{axis}[ legend style={at={(0.85,0.8)},anchor=north, font=\tiny, legend cell align=left},
			width=0.95\textwidth,
			height=0.49\textwidth,
			xlabel={\hspace{26.5ex} Time (sec) $|$ Optimality gap at time limit }, 
			ylabel={ \# Instances Solved},
			ytick={0,  50, 100, 150, 200, 250},
			yticklabels={0,  50, 100, 150, 200, 250},
			xtick={0,  1200, 2400, 3600, 4600, 5600, 6600},
			xticklabels={0,  1200, 2400,  $3600\; |\; 0\%\;\;$, 20\%, 40\%, 60\%},
			]
			
			\addplot[color=green!60!black] table[x=coord, y=BT-F] {Figures/plots/profile_time_soc.txt};
			\addlegendentry{\bddTargetFlow}
			\addplot[color=green!60!black, dashed] table[x=coord, y=BT] {Figures/plots/profile_time_soc.txt};
			\addlegendentry{\bddTarget}
			\addplot[color=violet!70!white] table[x=coord, y=BGWL] {Figures/plots/profile_time_soc.txt};
			\addlegendentry{\bddGeneralLift}
			\addplot[color=violet!70!white, dashed] table[x=coord, y=BGW] {Figures/plots/profile_time_soc.txt};
			\addlegendentry{\bddGeneral}
			\addplot[color= cyan] table[x=coord, y=BFL] {Figures/plots/profile_time_soc.txt};
			\addlegendentry{\bddFlowLift}
			\addplot[color= cyan, dashed] table[x=coord, y=BF] {Figures/plots/profile_time_soc.txt};
			\addlegendentry{\bddFlow}
			\addplot[color= blue] table[x=coord, y=CPLEX] {Figures/plots/profile_time_soc.txt};
			\addlegendentry{\cplex}
			\addplot[color= orange, dashed] table[x=coord, y=BP] {Figures/plots/profile_time_soc.txt};
			\addlegendentry{\bddProject}
			\addplot[color= orange] table[x=coord, y=BPL] {Figures/plots/profile_time_soc.txt};
			\addlegendentry{\bddProjectLift}
			
			\addplot[gray!80!white, dashed] coordinates {(3600,0) (3600,270)};
			
			\addplot[color=green!60!black, dashed] table[x=coord, y=BT] {Figures/plots/profile_gap_soc.txt};
			\addplot[color=green!60!black] table[x=coord, y=BT-F] {Figures/plots/profile_gap_soc.txt};
			\addplot[color=violet!70!white] table[x=coord, y=BGWL] {Figures/plots/profile_gap_soc.txt};
			\addplot[color=violet!70!white, dashed] table[x=coord, y=BGW] {Figures/plots/profile_gap_soc.txt};
			\addplot[color= cyan] table[x=coord, y=BFL] {Figures/plots/profile_gap_soc.txt};
			\addplot[color= cyan, dashed] table[x=coord, y=BF] {Figures/plots/profile_gap_soc.txt};
			\addplot[color= blue] table[x=coord, y=CPLEX] {Figures/plots/profile_gap_soc.txt};
			\addplot[color= orange, dashed] table[x=coord, y=BP] {Figures/plots/profile_gap_soc.txt};
			\addplot[color= orange] table[x=coord, y=BPL] {Figures/plots/profile_gap_soc.txt};
		\end{axis}
	\end{tikzpicture}
	\caption{Profile plot comparing the cumulative number of instances solved over time (left), and the cumulative number of instances over a final gap range (right) for SOC-CC dataset.  }\label{fig:profile_general}
\end{figure}

Figure \ref{fig:profile_general} depicts the performance of each algorithm for the SOC-CC instances (similar results can be found for SOC-K in Appendix \ref{appendix:allresutls_knapsack}). The graph illustrates the number of instances solved over time (left-hand side) and the accumulated number of instances over a final gap range (right-hand side). Once again, we see the positive impact of our lifting procedure and the dominance of \bddGeneralLift, \bddTarget, and \bddTargetFlow. 

\subsection{Solution Performance - Branch-and-Cut}

Lastly, we evaluate the effectiveness of the BDD cuts when added in each node of the branch-and-bound tree search. We compare the best CGLP-based approach (i.e., \bddTarget) with our best low-budget alternative (i.e., \bddFlowLift). We also propose a hybrid \hybrid\ that adds BDD target cuts at the root and combinatorial BDD flow cuts during search.  Table  \ref{tab:tree-gen} presents the average results for our two datasets. The table shows the number of instances solved, final gap, average solution time, and nodes explored for instances solved by all techniques, in addition to the total number of cuts added. For ease of comparison, we include again the results when adding cuts only at the root node in row \texttt{Root}, while the new version is included in row \texttt{Search}. 

\begin{table}[tb]
	\footnotesize
  \centering
  \caption{{\changes Aggregated results when adding cuts during tree search.}}
    \begin{tabular}{lll|rrrrr}
    \toprule
          &       &       & \# Solve & Gap   & Time (s)  & \# Nodes & \# Cuts \\
          \midrule
          \multirow{5}[6]{*}{\scriptsize{\textbf{SOC-K}}} & \multirow{2}[2]{*}{\bddFlowLift} & \texttt{Root} & 73    & 0.3\% & 25.8  &    56,376.5  &            54.9  \\
          &       & \texttt{Search} & 78    & 0.1\% & 51.5  &       8,355.5  &       2,918.4  \\
          \cmidrule{2-8}          
          & \multirow{2}[2]{*}{\bddTarget} & \texttt{Root} & \textbf{90} & \textbf{0.0\%} & \textbf{38.6} &    10,949.5  &          108.1  \\
          &       & \texttt{Search} & 47    & 0.7\% & 3516.0 & \textbf{     3,553.0} &          700.5  \\
          \cmidrule{2-8}          
          & \multicolumn{2}{l|}{\hybrid} & 85    & \textbf{0.0\%} & 57.0  &       3,900.0  &       1,866.7  \\
    \midrule
    \midrule
    \multirow{5}[6]{*}{\scriptsize{\textbf{SOC-CC}}} & \multirow{2}[2]{*}{\bddFlowLift} & \texttt{Root} & 149   & 5.6\% & 176.1 &    54,328.5  & 23.3 \\
          &       & \texttt{Search} & 177   & 3.6\% & \textbf{21.4} &       2,046.8  & 1,317.2 \\
\cmidrule{2-8}          & \multirow{2}[2]{*}{\bddTarget} & \texttt{Root} & 172   & 3.9\% & 54.2  &       4,118.2  & 98.1 \\
          &       & \texttt{Search} & 102   & 6.7\% & 704.9 &      \textbf{ 1,524.7}  & 1,249.7 \\
\cmidrule{2-8}          & \multicolumn{2}{l|}{\hybrid} & \textbf{179} & \textbf{3.5\%} & 52.2  & 1,751.6 & 1,326.9 \\
    \bottomrule
    \end{tabular}%
  \label{tab:tree-gen}%
\end{table}%

\bddFlowLift\ has a stronger performance when we add these cuts during search than just at the root node. In fact, \bddFlowLift\  outperform \bddTarget\ over the SOC-CC dataset when we add the latter cuts during search (i.e., 172 vs 177 instances solved). In contrast, \bddTarget\ performs best when the cuts are only added at the root node due to their long separation time (see Table \ref{tab:root_info}). Lastly, Table  \ref{tab:tree-gen} shows that \hybrid\ performs the best on the SOC-CC dataset, solving 179 instances in total. Nonetheless, this hybrid technique solves fewer instances than \bddTarget\ in the SOC-K dataset due to overhead of adding combinatorial cuts during search.

\section{Conclusions} \label{sec:conclusions}

We introduce a novel lifting and cutting-plane procedure for binary programs that leverage their combinatorial structure via a binary decision diagram (BDD) encoding of their constraints. Our lifting procedure relies on 0-1 disjunctions to rotate valid inequalities and uses a BDD to efficiently compute the disjunctive sub-problems. While our combinatorial lifting can enhance any cutting-plane approach, we also propose two novel BDD-based cut generation algorithms based on an alternative network-flow representation of the BDD. 

{\changes
BDDs give us the flexibility to apply our cut-and-lift approach to a wide range of non-linear problems. As a case study, we tested our procedure over second-order conic inequalities and compare its performance against a state-of-the-art solver (\cplex) and existing BDD cuts in the literature. Overall, our lifting procedure reduced the average root gap up to 29\% in our benchmark when applied to cuts generated by all tested methods. Also, a hybrid technique combining our approach with existing BDD cuts proved to be the most efficient over the tested benchmarks.
}

\bibliographystyle{spmpsci}      
\bibliography{bib_bdd_lifting}   


\newpage

\appendix

\section{Relaxed BDD Construction Procedure for Second-Order Cones}\label{appendix:soc-bdd}

We now present the relaxed BDD construction procedure for SOC inequalities based on the RSOC model. The construction algorithm is analogous for the case of SOC knapsack constraints and the RSOC-K model.

The state information at the core of our BDD construction algorithm is based on recursive model RSOC. This information is stored in each node of the BDD and  used to identify infeasible assignments and decide how to widen the BDD (i.e., split nodes). Our state information keeps track of each of the $l+1$ linear components in inequality \eqref{eq:soc}, i.e., $\ba^\top \bx$ and $\bd_k^\top \bx - h_k$ for each $k \in \{1,...,l\}$. Thus, each node $u \in \nodes_i$ has two $l+1$ dimensional vectors for top-down information,  $\infoMinDown(u)$ and  $\infoMaxDown(u)$, that approximate the linear components considering the partial assignments from $\rootnode$ to $u$. 
We set the top-down state information at the root node as $\infoMinDown_0(\rootnode):= \infoMaxDown_0(\rootnode):= 0$ and $\infoMinDown_k(\rootnode):= \infoMaxDown_k(\rootnode):= -h_k$ for all $k \in  \{1,...,l\}$. Then, for any $u \in \nodes_i$, $i \in \{2,...,n\}$, and $k \in \{1,..., l\}$ we update the states as:
\begin{align*}
\infoMinDown_0(u):= & \min_{a\in \incoming(u)}\{ \infoMinDown_0(\source(a)) + a_{i-1}\cdot\arcLabel_a \} , \\
\infoMaxDown_0(u):= & \max_{a\in \incoming(u)}\{ \infoMaxDown_0(\source(a)) + a_{i-1}\cdot\arcLabel_a \}, \\
\infoMinDown_k(u):= & \min_{a\in \incoming(u)}\{ \infoMinDown_k(\source(a)) + d_{ki-1}\cdot\arcLabel_a \}, \\
\infoMaxDown_k(u):= & \max_{a\in \incoming(u)}\{ \infoMaxDown_k(\source(a)) + d_{ki-1}\cdot\arcLabel_a \},
\end{align*}

Similarly, we use two $l+1$ dimensional vectors,  $\infoMinUp(u)$ and  $\infoMaxUp(u)$, for our bottom-up state information for each node $u \in \nodes$. The information is initialized at the terminal node as $\infoMinUp_0(\terminalnode):= \infoMaxUp_0(\terminalnode):= 0$ and $\infoMinUp_k(\terminalnode):= \infoMaxUp_k(\terminalnode):= 0$ for all $k \in  \{1,...,l\}$. Then, for any $u \in \nodes_i$, $i \in \{1,...,n-1\}$, and  $k \in \{1,..., l\}$, we have:
\begin{align*}
\infoMinUp_0(u):= & \min_{a\in \outgoing(u)}\{ \infoMinUp_0(\target(a)) + a_k\cdot\arcLabel_a \} , \\
\infoMaxUp_0(u):= & \max_{a\in \outgoing(u)}\{ \infoMaxUp_0(\target(a)) + a_k\cdot\arcLabel_a \}, \\
\infoMinUp_k(u):= & \min_{a\in \outgoing(u)}\{ \infoMinUp_k(\target(a)) + d_{ki}\cdot\arcLabel_a \}, \\
\infoMaxUp_k(u):= & \max_{a\in \outgoing(u)}\{ \infoMaxUp_k(\target(a)) + d_{ki}\cdot\arcLabel_a \}.
\end{align*}

For each node $u \in \nodes$, the state information under and over approximates the value of the linear components of \eqref{eq:soc} for all $\rootnode-u$ paths (i.e., top-down information) and for all $u-\terminalnode$ paths (i.e., bottom-up information).
We use the state information to identify if an arc corresponds to an infeasible assignment, i.e., all paths traversing it correspond to infeasible solutions of \eqref{eq:soc}. In particular, we can remove an arc $a=(u,u')\in \arcs_i$ if the following condition holds:
\begin{equation}\label{eq:filtering}
\infoMinDown_0(u) + a_i\cdot \arcLabel_a  + \infoMinUp_0(u') + \Omega\sqrt{ \sum_{k=1}^l g_k(a) } > b,
\end{equation}
\noindent where $g_k(a)$ for $k \in \{1,..., l\}$ is given by:
\[
g_k(a):= \begin{cases}
(\infoMinDown_k(u) + d_{ki}\cdot\arcLabel_a + \infoMinUp_k(u')) ^2,  & \!\!\mbox{if } \infoMinDown_k(u) + d_{ki}\cdot\arcLabel_a + \infoMinUp_k(u') >0, \\
(\infoMaxDown_k(u) + d_{ki}\cdot\arcLabel_a + \infoMaxUp_k(u'))^2,  & \!\!\mbox{if } \infoMaxDown_k(u) + d_{ki}\cdot\arcLabel_a + \infoMaxUp_k(u') <0, \\
0, & \mbox{otherwise}.
\end{cases}
\]

Notice that $g_k(a)$ under approximates $(\bd_k^\top \bx -d_k)^2$ for all paths traversing arc $a\in \arcs$, and so the left-hand side (LHS) of \eqref{eq:filtering} under approximates the LHS of \eqref{eq:soc} for all paths traversing arc $a\in \arcs_i$. Then,
all paths traversing an arc $a$ that satisfy \eqref{eq:filtering} correspond to invalid assignments for \eqref{eq:soc}.

If $\infoMaxDown_k(u) = \infoMinDown_k(u)$ for all nodes $u \in \nodes$, we recover the exact BDD based on the recursive model RSOC and condition \eqref{eq:filtering} is equivalent to \eqref{eq:rec_b}. Thus, our splitting procedure tries to achieve this property by  selecting nodes $u$ with $\infoMaxDown_k(u) - \infoMinDown_k(u) \geq \delta$ ($\delta > 0$) for some $k \in \{0,...,l\}$ and then split it into two new nodes, $u'$ and $u''$, so  $\infoMaxDown_k(u') - \infoMinDown_k(u')<\delta$ and $\infoMaxDown_k(u'') - \infoMinDown_k(u'')<\delta$. The splitting procedure then duplicates the outgoing arcs of $u$ and assigns them to both $u'$ and $u''$ to keep the same set of paths in $\bdd$.

\begin{algorithm}[htb]
	\caption{Relaxed (Exact) BDD Construction Procedure} \label{alg:bdd_iterative}
	\begin{algorithmic}[1]
		\Procedure{ConstructBDD}{SOC Constraint $\langle\ba,D, \bm{h},\Omega, b, n\rangle$, $\maxwidth$}
		\State $\bdd$ := \algBDDWidthOne($n$)
		\While{$\bdd$ has been modified}
		\State \algBDDNodesBottom($\nodes$)
		\For{$i \in \{1,...,n\}$}
		\State \algBDDNodesTop($\nodes_i$)
		\State \algBDDSplit($\nodes_i$, $\maxwidth$)
		\State \algBDDFilter($\nodes_i$)
		\EndFor
		\State  \algBDDNodesTop($\nodes_{n+1}$)
		\EndWhile
		\State \algReduceBDD($\bdd$)
		\State \algReturn\ $\bdd$ 
		\EndProcedure
	\end{algorithmic} 
\end{algorithm}

Our construction procedure creates a relaxed BDD  $\bdd=(\nodes,\arcs)$  by limiting its width $\width(\bdd)$ by a positive value $\maxwidth$, where $\width(\bdd):=\max_{i\in \varIndex}\{|\nodes_i|\}$ represents the maximum number of nodes in each layer. 
The complete BDD construction procedure is shown in Algorithm \ref{alg:bdd_iterative}. The algorithm starts creating a width-one BDD for the SOC constraint (line 2) and then updates the bottom-up information for all the nodes (line 4). During the top-down pass through the BDD (lines 5-9), the procedure updates the top-down information of layer $\nodes_i$, splits the nodes until we reach the width limit $\maxwidth$, and filters the emanating arcs of $\nodes_i$. The algorithm then checks if the BDD has been updated (line 3) and repeats the bottom-up and top-down iterations until the BDD cannot be updated any more. Lastly, we reduce the BDD (line 10) following the standard procedure in the literature \cite{bryant1986graph}.

The resulting BDD starts with all possible variable assignments (i.e., a width-one BDD) and removes arcs using condition \eqref{eq:filtering}. Thus, the procedure is guaranteed to construct a relaxed BDD for a SOC constraint. Notice that for a big enough $\maxwidth$, the procedure will return an exact BDD.

\begin{example}\label{exa:bdd_constrcution}
	Consider the following binary set defined by an SOC inequality $X= \{\bx \in \{0,1\}^3: 3x_1+x_2+x_3 + \sqrt{ (x_1+x_2+2x_3)^2 + (x_1+3x_2-x_3+3)^2  } \leq 8 \}$. 
	Figure \ref{fig:bdd-soc-example} depicts some of the steps to construct an exact BDD for $X$. The left most diagram corresponds to a width-one BDD for this problem. The top-down state information in  the root node is $( (\infoMinDown_0(\rootnode), \infoMaxDown_0(\rootnode)),\;(\infoMinDown_1(\rootnode), \infoMaxDown_1(\rootnode)),$ $ (\infoMinDown_2(\rootnode), \infoMaxDown_2(\rootnode)) )= ( (0,0),\; (0,0),\; (3,3) )$, while for node $u_1$ is $( (0,3), \; (0,1),\; (3,4)) $.
	
	The middle BDD illustrates the resulting BDD after splitting node $u_1$. The resulting nodes, $u_1'$ and $u_1''$, have top-down state information $ ( (0,0),\; (0,0),\; (3,3) )$ and $ ( (3,3),\; (1,1),$ $ (4,4) )$, respectively. In addition, the gray arc from $u_1''$ to $u_2$ corresponds to an invalid assignment: the bottom-up information of $u_2$ is $ ( (0,1),\; (0,2),\; (-1,0) )$, thus,  \eqref{eq:filtering} evaluates to $10.3 > 8$. 
	
\end{example}

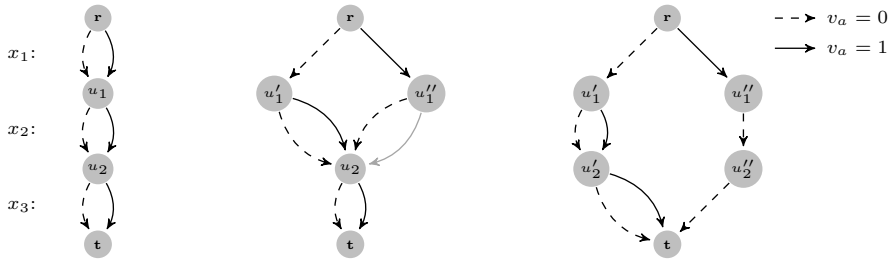
\begin{figure}[htb]
	\centering
	\tikzstyle{zero arc} = [draw,dashed, line width=0.5pt,->]
\tikzstyle{one arc} = [draw,line width=0.5pt,->]
\tikzstyle{optimal arc} = [draw,line width=2pt,->]
\tikzstyle{main node} = [circle,fill=gray!50,font=\tiny\bfseries, inner sep=1pt]
\begin{tikzpicture}[->,>=stealth',shorten >=1pt,auto,node distance=1cm,
thick]        
\node[main node] (r) at (0,0) {$\;\rootnode\;$};
\node[main node] (u1)  at (0,-1)  {$u_1$};
\node[main node] (u2)  at (0,-2) {$u_2$};
\node[main node] (t) at (0,-3) {$\;\terminalnode\;$};

\node (l1) at (-1,-0.5) {\scriptsize{$x_1$:}};
\node (l2) at (-1,-1.5) {\scriptsize{$x_2$:}};
\node (l3) at (-1,-2.5) {\scriptsize{$x_3$:}};

\path[every node/.style={font=\sffamily\small}]
(r) 
edge[zero arc,bend right=30] node [right] {} (u1)
edge[one arc, bend left=30] node [right] {} (u1)
(u1) 
edge[zero arc,bend right=30] node [right] {} (u2)
edge[one arc, bend left=30] node [right] {} (u2)
(u2)
edge[zero arc, bend right=30] node [right] {} (t)
edge[one arc, bend left=30] node [right] {} (t);

\end{tikzpicture}
	\hfill
	\tikzstyle{zero arc} = [draw,dashed, line width=0.5pt,->]
\tikzstyle{one arc} = [draw,line width=0.5pt,->]
\tikzstyle{optimal arc} = [draw,line width=2pt,->]
\tikzstyle{main node} = [circle,fill=gray!50,font=\tiny\bfseries, inner sep=1pt]
\begin{tikzpicture}[->,>=stealth',shorten >=1pt,auto,node distance=1cm,
thick]        
\node[main node] (r) at (0,0) {$\;\rootnode\;$};
\node[main node] (u1)  at (-1,-1)  {$u_1'$};
\node[main node] (u2)  at (1,-1)  {$u_1''$};
\node[main node] (u3)  at (0,-2) {$u_2$};
\node[main node] (t) at (0,-3) {$\;\terminalnode\;$};

\path[every node/.style={font=\sffamily\small}]
(r) 
edge[zero arc] node [left] {} (u1)
edge[one arc] node [left] {} (u2)
(u1) 
edge[zero arc,bend right=30] node [right] {} (u3)
edge[one arc, bend left=30] node [right] {} (u3)
(u2)
edge[zero arc,bend right=30] node [right] {} (u3)
edge[one arc, gray!70!white, bend left=30] node [right] {} (u3)
(u3)
edge[zero arc, bend right=30] node [right] {} (t)
edge[one arc, bend left=30] node [right] {} (t);

\end{tikzpicture}
	\hfill
	\tikzstyle{zero arc} = [draw,dashed, line width=0.5pt,->]
\tikzstyle{one arc} = [draw,line width=0.5pt,->]
\tikzstyle{optimal arc} = [draw,line width=2pt,->]
\tikzstyle{main node} = [circle,fill=gray!50,font=\tiny\bfseries, inner sep=1pt]
\begin{tikzpicture}[->,>=stealth',shorten >=1pt,auto,node distance=1cm,
thick]        
\node[main node] (r) at (0,0) {$\;\rootnode\;$};
\node[main node] (u1)  at (-1,-1)  {$u_1'$};
\node[main node] (u2)  at (1,-1)  {$u_1''$};
\node[main node] (u3)  at (-1,-2) {$u_2'$};
\node[main node] (u4)  at (1,-2) {$u_2''$};
\node[main node] (t) at (0,-3) {$\;\terminalnode\;$};

\node (aux1) at (1.3, 0) {};
\node (aux2) at (2.5, 0) {\scriptsize$\arcLabel_a =0$};
\node (aux3) at (1.3, -0.4) {};
\node (aux4) at (2.5, -0.4) {\scriptsize$\arcLabel_a =1$};

\path[every node/.style={font=\sffamily\small}]
(r) 
edge[zero arc] node [left] {} (u1)
edge[one arc] node [left] {} (u2)
(u1) 
edge[zero arc,bend right=30] node [right] {} (u3)
edge[one arc, bend left=30] node [right] {} (u3)
(u2)
edge[zero arc] node [left] {} (u4)
(u3)
edge[zero arc, bend right=30] node [right] {} (t)
edge[one arc, bend left=30] node [right] {} (t)
(u4)
edge[zero arc] node [left] {} (t)

(aux1) edge[zero arc] node [left] {} (aux2)
(aux3) edge[one arc] node [left] {} (aux4);

\end{tikzpicture}
	\caption{BDD construction procedure for set $X$ defined in Example \ref{exa:bdd_constrcution}. The figure depicts a width-one BDD (left), a BDD after the splitting and filtering procedure over $\nodes_2$ (middle), and the resulting exact reduced BDD (right).}  \label{fig:bdd-soc-example} 
\end{figure}

\section{Experiments Comparing Different BDD Widths} \label{appendix:width}

Table \ref{tab:width_all_general} presents the average performance for \cplex\ and  our four alternatives (i.e., \bddFlow, \bddFlowLift, \bddGeneral, and \bddGeneralLift) with three different maximum width values, $\maxwidth\in \{2000,3000,4000\}$, over the SOC-CC instances. The table shows the number of instances solved, average root gap, and average final gap for all techniques. 
Our four alternatives with $\maxwidth\in \{2000,3000,4000\}$ each outperform \cplex. $\maxwidth=4000$ achieves the best overall performance across the four combinatorial cut-and-lift alternatives. Similarly, $\maxwidth=4000$ achieves the best or comparable performance across the five BDD approaches over the SOC-K instances (right).

\begin{table}[htbp]
  \centering
  \caption{Average performance of all techniques for different BDD widths for SOC-CC.}
    \begin{tabular}{l|r|rrr}
    \toprule
          & \multicolumn{1}{l|}{Width} & \multicolumn{1}{l}{\# Solve} & \multicolumn{1}{l}{Root Gap} & \multicolumn{1}{l}{Final Gap} \\
    \midrule
    \cplex &       & 137   & 20.82\% & 7.75\% \\
    \midrule
    \multirow{3}[2]{*}{\bddFlow} & 2000  & 137   & 20.31\% & 7.35\% \\
          & 3000  & \textbf{140} & 20.11\% & \textbf{7.25\%} \\
          & 4000  & 139   & \textbf{20.01\%} & \textbf{7.25\%} \\
    \midrule
    \multirow{3}[2]{*}{\bddFlowLift} & 2000  & 149   & 16.61\% & 6.09\% \\
          & 3000  & \textbf{150} & 16.03\% & 6.04\% \\
          & 4000  & \textbf{150} & \textbf{15.68\%} & \textbf{5.89\%} \\
    \midrule
    \multirow{3}[2]{*}{\bddGeneral} & 2000  & 160   & 14.93\% & 5.21\% \\
          & 3000  & 159   & 14.05\% & 4.87\% \\
          & 4000  & \textbf{166} & \textbf{13.47\%} & \textbf{4.59\%} \\
    \midrule
    \multirow{3}[2]{*}{\bddGeneralLift} & 2000  & 160   & 14.91\% & 5.00\% \\
          & 3000  & \textbf{168} & 14.03\% & 4.66\% \\
          & 4000  & \textbf{168} & \textbf{13.44\%} & \textbf{4.43\%} \\
    \bottomrule
    \end{tabular}%
  \label{tab:width_all_general}%
\end{table}%

Similarly, Table \ref{tab:width_all_knapsack} presents the average performance over the SOC-K instances for \cplex, our  four alternatives (i.e., \bddFlow, \bddFlowLift, \bddGeneral, and \bddGeneralLift) 
with three different maximum width values, $\maxwidth\in \{2000,3000,4000\}$. Overall, $\maxwidth=4000$ achieves the best or comparable performance across the five BDD approaches.

\begin{table}[htbp]
\small
  \centering
  \caption{Average performance of all techniques for different BDD widths for SOC-K.}
    \begin{tabular}{l|r|rrr}
    \toprule
          & Width & \# Solve & Root Gap & Final Gap \\
    \midrule
    \cplex &       & 70    & 2.84\% & 0.39\% \\
    \midrule
    \multirow{3}[2]{*}{\bddFlow} & 2000  & 66    & 3.64\% & \textbf{0.52\%} \\
          & 3000  & \textbf{67} & 3.63\% & \textbf{0.52\%} \\
          & 4000  & 64    & \textbf{3.63\%} & 0.53\% \\
    \midrule
    \multirow{3}[2]{*}{\bddFlowLift} & 2000  & 74    & 2.84\% & 0.27\% \\
          & 3000  & 72    & \textbf{2.80\%} & 0.28\% \\
          & 4000  & \textbf{74} & \textbf{2.80\%} & \textbf{0.26\%} \\
    \midrule
    \multirow{3}[2]{*}{\bddGeneral} & 2000  & 75    & 1.62\% & 0.16\% \\
          & 3000  & \textbf{78} & 1.45\% & \textbf{0.15\%} \\
          & 4000  & 76    & \textbf{1.34\%} & 0.16\% \\
    \midrule
    \multirow{3}[2]{*}{\bddGeneralLift} & 2000  & 83    & 1.61\% & 0.04\% \\
          & 3000  & 87    & 1.44\% & 0.02\% \\
          & 4000  & \textbf{88} & \textbf{1.33\%} & \textbf{0.01\%} \\
    \bottomrule
    \end{tabular}%
  \label{tab:width_all_knapsack}%
\end{table}%

\section{Average Performance Comparison for Knapsack Chance Constraints }\label{appendix:allresutls_knapsack}

We now present additional results for the SOC-K dataset. As in Figure \ref{fig:profile_general}, Figure \ref{fig:profilh_kcc} illustrate the performance of each algorithm for the SOC-K dataset.  We see a clear dominance of \bddGeneralLift, \bddTarget, and \bddTargetFlow\ and also the positive impact of our combinatorial lifting in instances solved and gap reduction.

\begin{figure}[htb]
	\centering
	\begin{tikzpicture}
		\begin{axis}
			[ legend style={at={(0.85,0.8)},anchor=north, font=\tiny, legend cell align=left},
				width=0.95\textwidth,
				height=0.55\textwidth,
				xlabel={\hspace{25ex} Time (sec) $|$ Optimality gap at time limit }, 
				ylabel={ \# Instances Solved},
				ytick={0,  30, 60, 90},
				yticklabels={0, 30, 60, 90},
				xtick={0,  1200, 2400, 3600, 4400, 5200, 6000, 6800},
				xticklabels={0,  1200, 2400,  $3600\; |\; 0\%\;\;$, 1\%, 2\%, 3\%, 4\%},
				]
			
			\addplot[color=green!60!black] table[x=coord, y=BT-F] {Figures/plots/profile_time_soc_k.txt};
			\addlegendentry{\bddTargetFlow}
			\addplot[color=green!60!black, dashed] table[x=coord, y=BT] {Figures/plots/profile_time_soc_k.txt};
			\addlegendentry{\bddTarget}
			\addplot[color=violet!70!white] table[x=coord, y=BGWL] {Figures/plots/profile_time_soc_k.txt};
			\addlegendentry{\bddGeneralLift}
			\addplot[color=violet!70!white, dashed] table[x=coord, y=BGW] {Figures/plots/profile_time_soc_k.txt};
			\addlegendentry{\bddGeneral}
			\addplot[color= cyan] table[x=coord, y=BFL] {Figures/plots/profile_time_soc_k.txt};
			\addlegendentry{\bddFlowLift}
			\addplot[color= cyan, dashed] table[x=coord, y=BF] {Figures/plots/profile_time_soc_k.txt};
			\addlegendentry{\bddFlow}
			\addplot[color= blue] table[x=coord, y=CPLEX] {Figures/plots/profile_time_soc_k.txt};
			\addlegendentry{\cplex}
			\addplot[color=magenta, dashed] table[x=coord, y=C] {Figures/plots/profile_time_soc_k.txt};
			\addlegendentry{\cover}
			\addplot[color= magenta] table[x=coord, y=CL] {Figures/plots/profile_time_soc_k.txt};
			\addlegendentry{\coverLift}
			\addplot[color= lime!80!black] table[x=coord, y=BCL] {Figures/plots/profile_time_soc_k.txt};
			\addlegendentry{\bddCover}
			\addplot[color= orange, dashed] table[x=coord, y=BP] {Figures/plots/profile_time_soc_k.txt};
			\addlegendentry{\bddProject}
			\addplot[color= orange] table[x=coord, y=BPL] {Figures/plots/profile_time_soc_k.txt};
			\addlegendentry{\bddProjectLift}

			\addplot[gray!80!white, dashed] coordinates {(3600,0) (3600,90)};
			
			\addplot[color=green!60!black, dashed] table[x=coord, y=BT] {Figures/plots/profile_gap_soc_k.txt};
			\addplot[color=green!60!black] table[x=coord, y=BT-F] {Figures/plots/profile_gap_soc_k.txt};
			\addplot[color=violet!70!white] table[x=coord, y=BGWL] {Figures/plots/profile_gap_soc_k.txt};
			\addplot[color=violet!70!white, dashed] table[x=coord, y=BGW] {Figures/plots/profile_gap_soc_k.txt};
			\addplot[color= cyan] table[x=coord, y=BFL] {Figures/plots/profile_gap_soc_k.txt};
			\addplot[color= cyan, dashed] table[x=coord, y=BF] {Figures/plots/profile_gap_soc_k.txt};
			\addplot[color= blue] table[x=coord, y=CPLEX] {Figures/plots/profile_gap_soc_k.txt};
			\addplot[color=magenta, dashed] table[x=coord, y=C] {Figures/plots/profile_gap_soc_k.txt};
			\addplot[color=magenta] table[x=coord, y=CL] {Figures/plots/profile_gap_soc_k.txt};
			\addplot[color= lime!80!black] table[x=coord, y=BCL] {Figures/plots/profile_gap_soc_k.txt};
			\addplot[color= orange, dashed] table[x=coord, y=BP] {Figures/plots/profile_gap_soc_k.txt};
			\addplot[color= orange] table[x=coord, y=BPL] {Figures/plots/profile_gap_soc_k.txt};
		\end{axis}
	\end{tikzpicture}
	\caption{Profile plot comparing the accumulated number of instances solved over time (left), and the accumulated number of instances over a final gap range (right) for SOC-K dataset.  }\label{fig:profilh_kcc}
\end{figure}
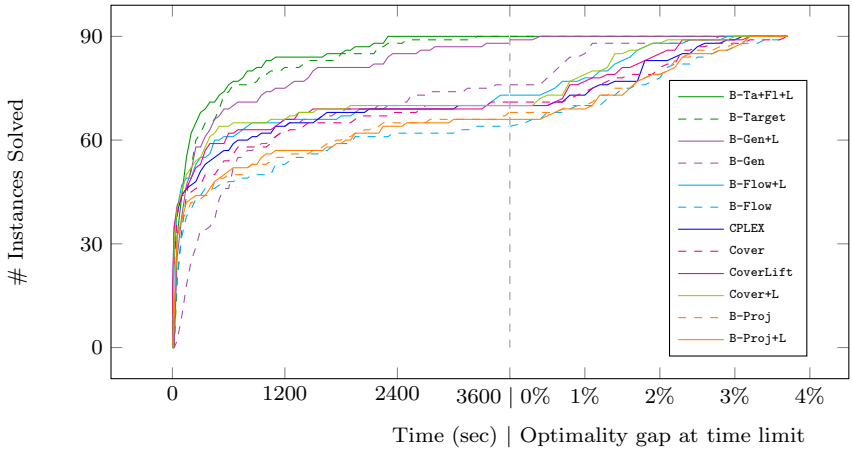

%
%

The following tables show average results for each parameter configuration over the SOC-CC benchmark. We present results for our four variants (i.e., \bddFlow, \bddFlowLift, \bddGeneral, and \bddGeneralLift), cover cut variants (i.e., \cover, \coverLift, and \bddCover), \cplex, and best performing BDD-based cuts (i.e., \bddTarget\ and \bddTargetFlow). All techniques add cuts only at the root node of the tree search. Tables \ref{tab:sock_solve}, \ref{tab:sock_root_gap}, and \ref{tab:sock_gap} show the number of instances solved, average root gap, and average final gap for each $n$, $m$, and $\Omega$ combination with $\maxwidth=4000$, respectively. Similarly,  Tables \ref{tab:sock_nodes} and \ref{tab:sock_time} show the average number of nodes in the branch-and-bound search and the average run time for the instances that all techniques solved to optimality. 

\begin{table}
	\centering
	\scriptsize
	\setlength{\tabcolsep}{2pt}
	\caption{Instances solved to optimality comparison across all techniques for SOC-K benchmark.}
	\begin{tabular}{r|r|r|rrrrrrrrrr}
    \toprule
    \multicolumn{3}{c}{}  & \multicolumn{10}{c}{\textbf{\# Instances Solved }} \\
    \midrule
    $n$     & $m$ & $\Omega$     & \cplex & \cover     & \coverLift    & \bddCover   & \bddFlow    & \bddFlowLift   & \bddGeneral    & \bddGeneralLift   & \bddTarget    & \bddTargetFlow \\
    \midrule
    \multirow{6}[4]{*}{100} & \multirow{3}[2]{*}{10} & 1     & \textbf{5} & \textbf{5} & \textbf{5} & \textbf{5} & \textbf{5} & \textbf{5} & \textbf{5} & \textbf{5} & \textbf{5} & \textbf{5} \\
          &       & 3     & \textbf{5} & \textbf{5} & \textbf{5} & \textbf{5} & \textbf{5} & \textbf{5} & \textbf{5} & \textbf{5} & \textbf{5} & \textbf{5} \\
          &       & 5     & \textbf{5} & \textbf{5} & \textbf{5} & \textbf{5} & \textbf{5} & \textbf{5} & \textbf{5} & \textbf{5} & \textbf{5} & \textbf{5} \\
\cmidrule{2-13}          & \multirow{3}[2]{*}{20} & 1     & \textbf{5} & \textbf{5} & \textbf{5} & \textbf{5} & \textbf{5} & \textbf{5} & \textbf{5} & \textbf{5} & \textbf{5} & \textbf{5} \\
          &       & 3     & \textbf{5} & \textbf{5} & \textbf{5} & \textbf{5} & \textbf{5} & \textbf{5} & \textbf{5} & \textbf{5} & \textbf{5} & \textbf{5} \\
          &       & 5     & \textbf{5} & \textbf{5} & \textbf{5} & \textbf{5} & 3     & \textbf{5} & \textbf{5} & \textbf{5} & \textbf{5} & \textbf{5} \\
    \midrule
    \multirow{6}[4]{*}{125} & \multirow{3}[2]{*}{10} & 1     & \textbf{5} & \textbf{5} & \textbf{5} & \textbf{5} & \textbf{5} & \textbf{5} & \textbf{5} & \textbf{5} & \textbf{5} & \textbf{5} \\
          &       & 3     & \textbf{5} & \textbf{5} & \textbf{5} & \textbf{5} & \textbf{5} & \textbf{5} & \textbf{5} & \textbf{5} & \textbf{5} & \textbf{5} \\
          &       & 5     & \textbf{5} & \textbf{5} & \textbf{5} & \textbf{5} & \textbf{5} & \textbf{5} & \textbf{5} & \textbf{5} & \textbf{5} & \textbf{5} \\
\cmidrule{2-13}          & \multirow{3}[2]{*}{20} & 1     & \textbf{5} & \textbf{5} & \textbf{5} & \textbf{5} & 4     & \textbf{5} & \textbf{5} & \textbf{5} & \textbf{5} & \textbf{5} \\
          &       & 3     & 1     & 2     & 1     & 1     & 1     & 3     & 4     & \textbf{5} & \textbf{5} & \textbf{5} \\
          &       & 5     & 0     & 0     & 0     & 0     & 0     & 1     & 2     & \textbf{5} & \textbf{5} & \textbf{5} \\
    \midrule
    \multirow{6}[4]{*}{150} & \multirow{3}[2]{*}{10} & 1     & \textbf{5} & \textbf{5} & \textbf{5} & \textbf{5} & \textbf{5} & \textbf{5} & \textbf{5} & \textbf{5} & \textbf{5} & \textbf{5} \\
          &       & 3     & \textbf{5} & \textbf{5} & \textbf{5} & \textbf{5} & \textbf{5} & \textbf{5} & \textbf{5} & \textbf{5} & \textbf{5} & \textbf{5} \\
          &       & 5     & \textbf{5} & \textbf{5} & \textbf{5} & \textbf{5} & 3     & \textbf{5} & \textbf{5} & \textbf{5} & \textbf{5} & \textbf{5} \\
\cmidrule{2-13}          & \multirow{3}[2]{*}{20} & 1     & 4     & 4     & 4     & 4     & 3     & 4     & 4     & \textbf{5} & \textbf{5} & \textbf{5} \\
          &       & 3     & 0     & 0     & 0     & 0     & 0     & 0     & 1     & 3     & \textbf{5} & \textbf{5} \\
          &       & 5     & 0     & 0     & 0     & 0     & 0     & 0     & 0     & \textbf{5} & \textbf{5} & \textbf{5} \\
    \midrule
    \multicolumn{3}{r|}{Total} & 70    & 71    & 70    & 70    & 64    & 73    & 76    & 88    & \textbf{90} & \textbf{90} \\
    \bottomrule
    \end{tabular}%
 \label{tab:sock_solve}
\end{table}

\begin{table}
	\centering
	\scriptsize
	\setlength{\tabcolsep}{2pt}
	\caption{Root Gap comparison across all techniques for SOC-K benchmark.}
	\begin{tabular}{r|r|r|rrrrrrrrrr}
    \toprule
    \multicolumn{3}{c}{}      & \multicolumn{10}{c}{\textbf{Root Gap (\%)}} \\
    \midrule
    $n$     & $m$ & $\Omega$     & \cplex & \cover     & \coverLift    & \bddCover   & \bddFlow    & \bddFlowLift   & \bddGeneral    & \bddGeneralLift   & \bddTarget    & \bddTargetFlow \\
    \midrule
    \multicolumn{1}{r|}{\multirow{6}[4]{*}{100}} & \multicolumn{1}{r|}{\multirow{3}[2]{*}{10}} & 1     & 0.9\% & 1.8\% & 1.4\% & 1.4\% & 2.0\% & 1.4\% & \textbf{0.5\%} & \textbf{0.5\%} & \textbf{0.5\%} & \textbf{0.5\%} \\
    \multicolumn{1}{r|}{} & \multicolumn{1}{r|}{} & 3     & 2.4\% & 2.9\% & 2.2\% & 1.9\% & 3.2\% & 2.0\% & 0.5\% & \textbf{0.4\%} & 0.5\% & \textbf{0.4\%} \\
    \multicolumn{1}{r|}{} & \multicolumn{1}{r|}{} & 5     & 3.7\% & 3.9\% & 2.8\% & 2.2\% & 4.4\% & 2.6\% & \textbf{0.3\%} & \textbf{0.3\%} & \textbf{0.3\%} & \textbf{0.3\%} \\
\cmidrule{2-13}    \multicolumn{1}{r|}{} & \multicolumn{1}{r|}{\multirow{3}[2]{*}{20}} & 1     & 1.9\% & 2.8\% & 2.4\% & 2.3\% & 3.2\% & 2.5\% & \textbf{1.1\%} & \textbf{1.1\%} & \textbf{1.1\%} & \textbf{1.1\%} \\
    \multicolumn{1}{r|}{} & \multicolumn{1}{r|}{} & 3     & 4.5\% & 5.0\% & 4.5\% & 4.4\% & 5.4\% & 4.3\% & \textbf{1.5\%} & \textbf{1.5\%} & \textbf{1.5\%} & \textbf{1.5\%} \\
    \multicolumn{1}{r|}{} & \multicolumn{1}{r|}{} & 5     & 6.1\% & 6.3\% & 5.9\% & 5.5\% & 7.1\% & 5.4\% & \textbf{1.5\%} & \textbf{1.5\%} & \textbf{1.5\%} & \textbf{1.5\%} \\
    \midrule
    \multicolumn{1}{r|}{\multirow{6}[4]{*}{125}} & \multicolumn{1}{r|}{\multirow{3}[2]{*}{10}} & 1     & 0.7\% & 1.4\% & 0.9\% & 1.0\% & 1.8\% & 1.2\% & \textbf{0.7\%} & \textbf{0.7\%} & \textbf{0.7\%} & \textbf{0.7\%} \\
    \multicolumn{1}{r|}{} & \multicolumn{1}{r|}{} & 3     & 1.8\% & 2.2\% & 1.6\% & 1.5\% & 2.5\% & 1.6\% & \textbf{0.8\%} & \textbf{0.8\%} & \textbf{0.8\%} & \textbf{0.8\%} \\
    \multicolumn{1}{r|}{} & \multicolumn{1}{r|}{} & 5     & 2.7\% & 2.8\% & 2.0\% & 1.7\% & 3.3\% & 1.8\% & \textbf{0.9\%} & \textbf{0.9\%} & \textbf{0.9\%} & \textbf{0.9\%} \\
\cmidrule{2-13}    \multicolumn{1}{r|}{} & \multicolumn{1}{r|}{\multirow{3}[2]{*}{20}} & 1     & 2.1\% & 2.9\% & 2.6\% & 2.5\% & 3.1\% & 2.7\% & \textbf{1.9\%} & \textbf{1.9\%} & \textbf{1.9\%} & \textbf{1.9\%} \\
    \multicolumn{1}{r|}{} & \multicolumn{1}{r|}{} & 3     & 4.0\% & 4.3\% & 4.1\% & 4.1\% & 4.7\% & 4.1\% & \textbf{2.4\%} & \textbf{2.4\%} & \textbf{2.4\%} & \textbf{2.4\%} \\
    \multicolumn{1}{r|}{} & \multicolumn{1}{r|}{} & 5     & 5.8\% & 6.0\% & 5.6\% & 5.5\% & 6.5\% & 5.6\% & \textbf{2.7\%} & \textbf{2.7\%} & \textbf{2.7\%} & \textbf{2.7\%} \\
    \midrule
    \multicolumn{1}{r|}{\multirow{6}[4]{*}{150}} & \multicolumn{1}{r|}{\multirow{3}[2]{*}{10}} & 1     & \textbf{0.5\%} & 1.1\% & 0.9\% & 0.9\% & 1.3\% & 1.1\% & 0.7\% & 0.7\% & 0.7\% & 0.7\% \\
    \multicolumn{1}{r|}{} & \multicolumn{1}{r|}{} & 3     & 1.6\% & 1.9\% & 1.5\% & 1.4\% & 2.1\% & 1.5\% & \textbf{0.9\%} & \textbf{0.9\%} & \textbf{0.9\%} & \textbf{0.9\%} \\
    \multicolumn{1}{r|}{} & \multicolumn{1}{r|}{} & 5     & 2.7\% & 2.8\% & 2.4\% & 2.3\% & 3.2\% & 2.6\% & \textbf{1.3\%} & \textbf{1.3\%} & \textbf{1.3\%} & \textbf{1.3\%} \\
\cmidrule{2-13}    \multicolumn{1}{r|}{} & \multicolumn{1}{r|}{\multirow{3}[2]{*}{20}} & 1     & 1.6\% & 2.3\% & 1.9\% & 1.9\% & 2.5\% & 2.2\% & \textbf{1.6\%} & \textbf{1.6\%} & \textbf{1.6\%} & \textbf{1.6\%} \\
    \multicolumn{1}{r|}{} & \multicolumn{1}{r|}{} & 3     & 3.3\% & 3.7\% & 3.3\% & 3.1\% & 3.9\% & 3.3\% & \textbf{2.1\%} & \textbf{2.1\%} & \textbf{2.1\%} & \textbf{2.1\%} \\
    \multicolumn{1}{r|}{} & \multicolumn{1}{r|}{} & 5     & 4.7\% & 5.1\% & 4.5\% & 4.3\% & 5.3\% & 4.4\% & 2.6\% & \textbf{2.5\%} & \textbf{2.5\%} & \textbf{2.5\%} \\
    \midrule
    \multicolumn{3}{r|}{Average} & 2.8\% & 3.3\% & 2.8\% & 2.7\% & 3.6\% & 2.8\% & \textbf{1.3\%} & \textbf{1.3\%} & \textbf{1.3\%} & \textbf{1.3\%} \\
    \bottomrule
    \end{tabular}%

 \label{tab:sock_root_gap}
\end{table}

\begin{table}
	\centering
	\scriptsize
	\setlength{\tabcolsep}{2pt}
	\caption{Final Gap comparison across all techniques for SOC-K benchmark.}
	 \begin{tabular}{r|r|r|rrrrrrrrrr}
    \toprule
     \multicolumn{3}{c}{} & \multicolumn{10}{c}{\textbf{Final Gap (\%)}} \\
    \midrule
    $n$     & $m$ & $\Omega$     & \cplex & \cover     & \coverLift    & \bddCover   & \bddFlow    & \bddFlowLift   & \bddGeneral    & \bddGeneralLift   & \bddTarget    & \bddTargetFlow \\
    \midrule
    \multicolumn{1}{r|}{\multirow{6}[4]{*}{100}} & \multicolumn{1}{r|}{\multirow{3}[2]{*}{10}} & 1     & \textbf{0.0\%} & \textbf{0.0\%} & \textbf{0.0\%} & \textbf{0.0\%} & \textbf{0.0\%} & \textbf{0.0\%} & \textbf{0.0\%} & \textbf{0.0\%} & \textbf{0.0\%} & \textbf{0.0\%} \\
    \multicolumn{1}{r|}{} & \multicolumn{1}{r|}{} & 3     & \textbf{0.0\%} & \textbf{0.0\%} & \textbf{0.0\%} & \textbf{0.0\%} & \textbf{0.0\%} & \textbf{0.0\%} & \textbf{0.0\%} & \textbf{0.0\%} & \textbf{0.0\%} & \textbf{0.0\%} \\
    \multicolumn{1}{r|}{} & \multicolumn{1}{r|}{} & 5     & \textbf{0.0\%} & \textbf{0.0\%} & \textbf{0.0\%} & \textbf{0.0\%} & \textbf{0.0\%} & \textbf{0.0\%} & \textbf{0.0\%} & \textbf{0.0\%} & \textbf{0.0\%} & \textbf{0.0\%} \\
\cmidrule{2-13}    \multicolumn{1}{r|}{} & \multicolumn{1}{r|}{\multirow{3}[2]{*}{20}} & 1     & \textbf{0.0\%} & \textbf{0.0\%} & \textbf{0.0\%} & \textbf{0.0\%} & \textbf{0.0\%} & \textbf{0.0\%} & \textbf{0.0\%} & \textbf{0.0\%} & \textbf{0.0\%} & \textbf{0.0\%} \\
    \multicolumn{1}{r|}{} & \multicolumn{1}{r|}{} & 3     & \textbf{0.0\%} & \textbf{0.0\%} & \textbf{0.0\%} & \textbf{0.0\%} & \textbf{0.0\%} & \textbf{0.0\%} & \textbf{0.0\%} & \textbf{0.0\%} & \textbf{0.0\%} & \textbf{0.0\%} \\
    \multicolumn{1}{r|}{} & \multicolumn{1}{r|}{} & 5     & \textbf{0.0\%} & \textbf{0.0\%} & \textbf{0.0\%} & \textbf{0.0\%} & 0.4\% & \textbf{0.0\%} & \textbf{0.0\%} & \textbf{0.0\%} & \textbf{0.0\%} & \textbf{0.0\%} \\
    \midrule
    \multicolumn{1}{r|}{\multirow{6}[4]{*}{125}} & \multicolumn{1}{r|}{\multirow{3}[2]{*}{10}} & 1     & \textbf{0.0\%} & \textbf{0.0\%} & \textbf{0.0\%} & \textbf{0.0\%} & \textbf{0.0\%} & \textbf{0.0\%} & \textbf{0.0\%} & \textbf{0.0\%} & \textbf{0.0\%} & \textbf{0.0\%} \\
    \multicolumn{1}{r|}{} & \multicolumn{1}{r|}{} & 3     & \textbf{0.0\%} & \textbf{0.0\%} & \textbf{0.0\%} & \textbf{0.0\%} & \textbf{0.0\%} & \textbf{0.0\%} & \textbf{0.0\%} & \textbf{0.0\%} & \textbf{0.0\%} & \textbf{0.0\%} \\
    \multicolumn{1}{r|}{} & \multicolumn{1}{r|}{} & 5     & \textbf{0.0\%} & \textbf{0.0\%} & \textbf{0.0\%} & \textbf{0.0\%} & \textbf{0.0\%} & \textbf{0.0\%} & \textbf{0.0\%} & \textbf{0.0\%} & \textbf{0.0\%} & \textbf{0.0\%} \\
\cmidrule{2-13}    \multicolumn{1}{r|}{} & \multicolumn{1}{r|}{\multirow{3}[2]{*}{20}} & 1     & \textbf{0.0\%} & \textbf{0.0\%} & \textbf{0.0\%} & \textbf{0.0\%} & 0.1\% & \textbf{0.0\%} & \textbf{0.0\%} & \textbf{0.0\%} & \textbf{0.0\%} & \textbf{0.0\%} \\
    \multicolumn{1}{r|}{} & \multicolumn{1}{r|}{} & 3     & 1.0\% & 0.9\% & 0.8\% & 0.8\% & 1.2\% & 0.4\% & 0.1\% & \textbf{0.0\%} & \textbf{0.0\%} & \textbf{0.0\%} \\
    \multicolumn{1}{r|}{} & \multicolumn{1}{r|}{} & 5     & 2.5\% & 2.5\% & 2.0\% & 1.6\% & 2.8\% & 1.6\% & 0.5\% & \textbf{0.0\%} & \textbf{0.0\%} & \textbf{0.0\%} \\
    \midrule
    \multicolumn{1}{r|}{\multirow{6}[4]{*}{150}} & \multicolumn{1}{r|}{\multirow{3}[2]{*}{10}} & 1     & \textbf{0.0\%} & \textbf{0.0\%} & \textbf{0.0\%} & \textbf{0.0\%} & \textbf{0.0\%} & \textbf{0.0\%} & \textbf{0.0\%} & \textbf{0.0\%} & \textbf{0.0\%} & \textbf{0.0\%} \\
    \multicolumn{1}{r|}{} & \multicolumn{1}{r|}{} & 3     & \textbf{0.0\%} & \textbf{0.0\%} & \textbf{0.0\%} & \textbf{0.0\%} & \textbf{0.0\%} & \textbf{0.0\%} & \textbf{0.0\%} & \textbf{0.0\%} & \textbf{0.0\%} & \textbf{0.0\%} \\
    \multicolumn{1}{r|}{} & \multicolumn{1}{r|}{} & 5     & \textbf{0.0\%} & \textbf{0.0\%} & \textbf{0.0\%} & \textbf{0.0\%} & 0.2\% & \textbf{0.0\%} & \textbf{0.0\%} & \textbf{0.0\%} & \textbf{0.0\%} & \textbf{0.0\%} \\
\cmidrule{2-13}    \multicolumn{1}{r|}{} & \multicolumn{1}{r|}{\multirow{3}[2]{*}{20}} & 1     & 0.1\% & 0.1\% & 0.2\% & 0.1\% & 0.3\% & 0.1\% & 0.2\% & \textbf{0.0\%} & \textbf{0.0\%} & \textbf{0.0\%} \\
    \multicolumn{1}{r|}{} & \multicolumn{1}{r|}{} & 3     & 1.3\% & 1.3\% & 1.0\% & 0.7\% & 1.6\% & 0.9\% & 0.6\% & 0.1\% & \textbf{0.0\%} & \textbf{0.0\%} \\
    \multicolumn{1}{r|}{} & \multicolumn{1}{r|}{} & 5     & 2.1\% & 2.3\% & 1.8\% & 1.6\% & 2.7\% & 1.6\% & 1.5\% & \textbf{0.0\%} & \textbf{0.0\%} & \textbf{0.0\%} \\
    \midrule
    \multicolumn{3}{r|}{Average} & 0.4\% & 0.4\% & 0.3\% & 0.3\% & 0.5\% & 0.3\% & 0.2\% & \textbf{0.0\%} & \textbf{0.0\%} & \textbf{0.0\%} \\
    \bottomrule
    \end{tabular}%
  
 \label{tab:sock_gap}
\end{table}

\begin{table}
	\centering
	\tiny
	\setlength{\tabcolsep}{2pt}
	\caption{Nodes explored comparison across all techniques for SOC-K benchmark.}
	\begin{tabular}{r|r|r|rrrrrrrrrr}
    \toprule
    \multicolumn{3}{c}{}  & \multicolumn{10}{c}{\textbf{\# Nodes Explored}} \\
    \midrule
    $n$     & $m$ & $\Omega$     & \cplex & \cover     & \coverLift    & \bddCover   & \bddFlow    & \bddFlowLift   & \bddGeneral    & \bddGeneralLift   & \bddTarget    & \bddTargetFlow \\
    \midrule
    \multicolumn{1}{r|}{\multirow{6}[4]{*}{100}} & \multicolumn{1}{r|}{\multirow{3}[2]{*}{10}} & 1     &         2,187  &             9,493  &             3,407  &             3,808  &          13,279  &             5,055  &             191  &             170  &             173  & \textbf{            159} \\
    \multicolumn{1}{r|}{} & \multicolumn{1}{r|}{} & 3     &      15,911  &          38,018  &          18,616  &          13,374  &          77,135  &          11,973  &             127  &             100  & \textbf{               92} &             167  \\
    \multicolumn{1}{r|}{} & \multicolumn{1}{r|}{} & 5     &      56,024  &          73,318  &          34,888  &          11,065  &       197,240  &          24,390  &             182  &             171  &             214  & \textbf{            165} \\
\cmidrule{2-13}    \multicolumn{1}{r|}{} & \multicolumn{1}{r|}{\multirow{3}[2]{*}{20}} & 1     &      12,093  &          84,852  &          36,813  &          37,479  &       152,371  &          56,573  &         1,929  &         1,128  &         1,088  & \textbf{        1,015} \\
    \multicolumn{1}{r|}{} & \multicolumn{1}{r|}{} & 3     &   275,046  &       498,671  &       367,255  &       326,495  &   1,061,138  &       271,931  &         3,062  &         1,465  &         1,591  & \textbf{        1,181} \\
    \multicolumn{1}{r|}{} & \multicolumn{1}{r|}{} & 5     &   796,298  &   1,030,628  &   1,073,455  &       431,804  &   1,648,577  &       655,462  &         1,461  &             607  &             578  & \textbf{            559} \\
    \midrule
    \multicolumn{1}{r|}{\multirow{6}[4]{*}{125}} & \multicolumn{1}{r|}{\multirow{3}[2]{*}{10}} & 1     & \textbf{        2,557} &          26,109  &             5,254  &             6,707  &          59,136  &          22,296  &         3,864  &         4,875  &         2,747  &         3,476  \\
    \multicolumn{1}{r|}{} & \multicolumn{1}{r|}{} & 3     &      16,810  &          89,336  &          35,721  &          28,278  &       190,740  &          39,481  &         8,114  & \textbf{        4,436} &         4,720  &         4,751  \\
    \multicolumn{1}{r|}{} & \multicolumn{1}{r|}{} & 5     &   157,561  &       301,041  &       253,128  &          87,198  &   1,290,400  &          89,930  &         6,137  &         4,415  &         3,665  & \textbf{        3,521} \\
\cmidrule{2-13}    \multicolumn{1}{r|}{} & \multicolumn{1}{r|}{\multirow{3}[2]{*}{20}} & 1     &   243,540  &   1,134,038  &       628,234  &       584,492  &   2,016,914  &       568,773  &   120,849  &      90,177  &      83,596  & \textbf{     68,425} \\
    \multicolumn{1}{r|}{} & \multicolumn{1}{r|}{} & 3     &   445,816  &       902,386  &       476,674  &       513,874  &   2,206,133  &   1,047,598  &      86,749  &      30,173  & \textbf{     24,841} &      26,485  \\
    \multicolumn{1}{r|}{} & \multicolumn{1}{r|}{} & 5     &  -    &  -    &  -    &  -    &  -    &  -    &  -    &  -    &  -    &  -  \\
    \midrule
    \multicolumn{1}{r|}{\multirow{6}[4]{*}{150}} & \multicolumn{1}{r|}{\multirow{3}[2]{*}{10}} & 1     & \textbf{        1,835} &          39,165  &          14,878  &          18,218  &          61,289  &          33,738  &      17,673  &         8,190  &      16,377  &      10,179  \\
    \multicolumn{1}{r|}{} & \multicolumn{1}{r|}{} & 3     &   267,752  &   1,156,661  &   1,084,202  &       400,829  &   1,900,415  &       399,942  &      56,718  &      36,842  & \textbf{     29,805} &      31,726  \\
    \multicolumn{1}{r|}{} & \multicolumn{1}{r|}{} & 5     &   394,755  &       877,686  &       423,235  &       337,258  &   2,718,140  &       658,492  &      88,738  &   111,968  & \textbf{     63,792} &      64,269  \\
\cmidrule{2-13}    \multicolumn{1}{r|}{} & \multicolumn{1}{r|}{\multirow{3}[2]{*}{20}} & 1     &   453,059  &   2,707,252  &   1,202,011  &   1,186,773  &   3,237,491  &   2,514,477  &   541,442  &   482,927  &   442,424  & \textbf{  377,666} \\
    \multicolumn{1}{r|}{} & \multicolumn{1}{r|}{} & 3     &  -    &  -    &  -    &  -    &  -    &  -    &  -    &  -    &  -    &  -  \\
    \multicolumn{1}{r|}{} & \multicolumn{1}{r|}{} & 5     &  -    &  -    &  -    &  -    &  -    &  -    &  -    &  -    &  -    &  -  \\
    \midrule
    \multicolumn{3}{r|}{Average} &   209,416  &       597,910  &       377,185  &       265,843  &   1,122,027  &       426,674  &      62,482  &      51,843  &      45,047  & \textbf{     39,583} \\
    \bottomrule
    \end{tabular}%
  
 \label{tab:sock_nodes}
\end{table}

\begin{table}
	\centering
	\scriptsize
	\setlength{\tabcolsep}{1.5pt}
	\caption{Solving time comparison across all techniques for SOC-K benchmark.}
	\begin{tabular}{r|r|r|rrrrrrrrrr}
    \toprule
    \multicolumn{3}{c|}{} & \multicolumn{10}{c}{\textbf{Solving Time (sec)}} \\
    \midrule
    $n$     & $m$ & $\Omega$     & \cplex & \cover     & \coverLift    & \bddCover   & \bddFlow    & \bddFlowLift   & \bddGeneral    & \bddGeneralLift   & \bddTarget    & \bddTargetFlow \\
    \midrule
    \multicolumn{1}{r|}{\multirow{6}[4]{*}{100}} & \multicolumn{1}{r|}{\multirow{3}[2]{*}{10}} & 1     & 3.3   & 2.7   & \textbf{1.4} & 7.2   & 13.5  & 7.6   & 138.5 & 31.5  & 37.2  & 24.4 \\
    \multicolumn{1}{r|}{} & \multicolumn{1}{r|}{} & 3     & 9.4   & 12.0  & \textbf{6.5} & 9.8   & 46.2  & 9.6   & 170.2 & 48.8  & 61.1  & 36.4 \\
    \multicolumn{1}{r|}{} & \multicolumn{1}{r|}{} & 5     & 32.3  & 24.6  & 13.0  & \textbf{9.0} & 97.2  & 13.1  & 391.8 & 55.2  & 72.5  & 33.0 \\
\cmidrule{2-13}    \multicolumn{1}{r|}{} & \multicolumn{1}{r|}{\multirow{3}[2]{*}{20}} & 1     & 19.1  & 38.5  & \textbf{18.8} & 30.0  & 121.3 & 39.7  & 457.6 & 149.4 & 135.5 & 84.2 \\
    \multicolumn{1}{r|}{} & \multicolumn{1}{r|}{} & 3     & 317.8 & 273.7 & 217.1 & 190.3 & 762.6 & \textbf{162.8} & 753.6 & 260.4 & 235.7 & 179.6 \\
    \multicolumn{1}{r|}{} & \multicolumn{1}{r|}{} & 5     & 970.4 & 718.5 & 744.6 & 331.2 & 1330.4 & 473.2 & 1232.0 & 374.7 & 373.7 & \textbf{220.1} \\
    \midrule
    \multicolumn{1}{r|}{\multirow{6}[4]{*}{125}} & \multicolumn{1}{r|}{\multirow{3}[2]{*}{10}} & 1     & 3.3   & 6.8   & \textbf{2.4} & 12.1  & 36.2  & 16.1  & 100.3 & 29.1  & 44.2  & 27.0 \\
    \multicolumn{1}{r|}{} & \multicolumn{1}{r|}{} & 3     & 11.8  & 23.7  & \textbf{12.7} & 18.4  & 80.6  & 20.9  & 415.5 & 67.2  & 88.9  & 46.6 \\
    \multicolumn{1}{r|}{} & \multicolumn{1}{r|}{} & 5     & 105.5 & 101.0 & 83.4  & 41.9  & 400.6 & \textbf{39.6} & 826.3 & 80.9  & 113.5 & 55.3 \\
\cmidrule{2-13}    \multicolumn{1}{r|}{} & \multicolumn{1}{r|}{\multirow{3}[2]{*}{20}} & 1     & 357.6 & 567.0 & 322.3 & 323.5 & 1031.0 & 279.5 & 409.6 & 227.1 & 220.0 & \textbf{146.7} \\
    \multicolumn{1}{r|}{} & \multicolumn{1}{r|}{} & 3     & 645.0 & 668.3 & 338.0 & 425.5 & 1716.2 & 652.4 & 946.3 & 518.6 & 523.2 & \textbf{275.1} \\
    \multicolumn{1}{r|}{} & \multicolumn{1}{r|}{} & 5     & -     & -     & -     & -     & -     & -     & -     & -     & -     & - \\
\cmidrule{2-13}    \multicolumn{1}{r|}{\multirow{6}[4]{*}{150}} & \multicolumn{1}{r|}{\multirow{3}[2]{*}{10}} & 1     & 2.9   & 10.0  & \textbf{5.2} & 18.6  & 36.1  & 22.8  & 85.2  & 32.1  & 41.7  & 29.1 \\
    \multicolumn{1}{r|}{} & \multicolumn{1}{r|}{} & 3     & 183.1 & 387.3 & 345.0 & 140.5 & 759.0 & 129.2 & 323.6 & 94.8  & 98.9  & \textbf{61.2} \\
    \multicolumn{1}{r|}{} & \multicolumn{1}{r|}{} & 5     & 252.5 & 260.5 & 156.5 & 127.5 & 1080.5 & 213.2 & 559.7 & 182.4 & 148.7 & \textbf{100.9} \\
\cmidrule{2-13}    \multicolumn{1}{r|}{} & \multicolumn{1}{r|}{\multirow{3}[2]{*}{20}} & 1     & 766.9 & 1506.0 & 808.8 & 750.1 & 2167.0 & 1341.7 & 914.8 & 391.4 & 375.5 & \textbf{324.3} \\
    \multicolumn{1}{r|}{} & \multicolumn{1}{r|}{} & 3     & -     & -     & -     & -     & -     & -     & -     & -     & -     & - \\
    \multicolumn{1}{r|}{} & \multicolumn{1}{r|}{} & 5     & -     & -     & -     & -     & -     & -     & -     & -     & -     & - \\
    \midrule
    \multicolumn{3}{r|}{Average} &   245.4  &   306.7  &   205.0  &   162.4  &   645.2  &   228.1  &   515.0  &   169.6  &   171.4  & \textbf{  109.6 } \\
    \bottomrule
    \end{tabular}%
   \label{tab:sock_time}
\end{table}


%

\newpage
\section{Average Performance Comparison for General Chance Constraints } \label{appendix:allresutls_general}

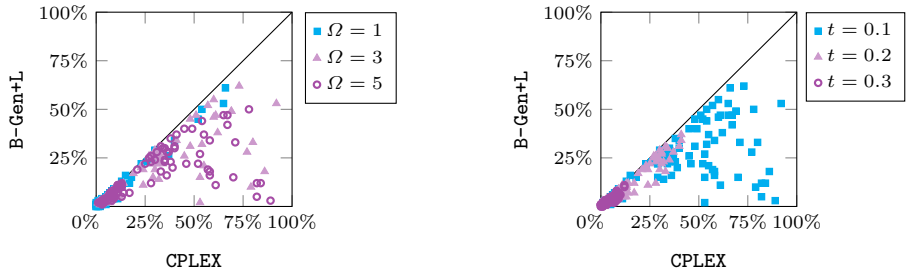
\begin{figure}[htb]
	\begin{tikzpicture}
		\begin{axis}[ legend style={at={(1.3,1)},anchor=north,font=\scriptsize},
			xlabel={\cplex}, ylabel={\bddGeneralLift},
			width=0.35\textwidth,
			height=0.35\textwidth,
			ymin=0, ymax=1, xmin=0, xmax=1,
			label style={font=\small},
			tick label style={font=\scriptsize},
			xtick={0,  0.25, 0.5, 0.75, 1},
			xticklabels={$0\%\quad$,  25\%, 50\%, 75\%, 100\%},
			ytick={0.25, 0.5, 0.75, 1},
			yticklabels={ 25\%, 50\%, 75\%, 100\%},
			ylabel near ticks ]
			
			\addplot[only marks, mark =square*, mark options={scale=0.6}, color= cyan] table {Figures/plots/rgap_bddvsCplex_o1_4000.txt};
			\addlegendentry{$\Omega = 1$}
			\addplot[only marks, mark =triangle*, mark options={scale=0.8}, color= violet!40!white] table {Figures/plots/rgap_bddvsCplex_o3_4000.txt};
			\addlegendentry{$\Omega = 3$}
			\addplot[only marks, mark =o, mark options={scale=0.6, thick}, color=violet!70!white ] table {Figures/plots/rgap_bddvsCplex_o5_4000.txt};
			\addlegendentry{$\Omega = 5$}
			\addplot[black] coordinates {(0,0) (1,1)};
		\end{axis}
	\end{tikzpicture}
	\hfill
	\begin{tikzpicture}
		\begin{axis}[ legend style={at={(1.3,1)},anchor=north,font=\scriptsize},
			xlabel={\cplex}, ylabel={\bddGeneralLift},
			width=0.35\textwidth,
			height=0.35\textwidth,
			ymin=0, ymax=1, xmin=0, xmax=1,
			label style={font=\small},
			tick label style={font=\scriptsize},
			xtick={0,  0.25, 0.5, 0.75, 1},
			xticklabels={$0\%\quad$,  25\%, 50\%, 75\%, 100\%},
			ytick={0.25, 0.5, 0.75, 1},
			yticklabels={ 25\%, 50\%, 75\%, 100\%},
			ylabel near ticks  ]
			
			\addplot[only marks, mark =square*, mark options={scale=0.6}, color= cyan] table {Figures/plots/rgap_bddvsCplex_b1_4000.txt};
			\addlegendentry{$t=0.1$}
			\addplot[only marks, mark =triangle*, mark options={scale=0.8}, color= violet!40!white] table {Figures/plots/rgap_bddvsCplex_b2_4000.txt};
			\addlegendentry{$t=0.2$}
			\addplot[only marks, mark =o, mark options={scale=0.6, thick}, color=violet!70!white ] table {Figures/plots/rgap_bddvsCplex_b3_4000.txt};
			\addlegendentry{$t = 0.3$}
			\addplot[black] coordinates {(0,0) (1,1)};
		\end{axis}
	\end{tikzpicture}
	\caption{Root node gap comparison with \cplex\ for the SOC-CC instances. The plot on the left considers different values of $\Omega$, and the plot on the right  different values of $t$. } \label{fig:gap_config}
\end{figure}

We now present additional results for the SOC-CC dataset. Figure \ref{fig:gap_config} shows two plots comparing the root gap of \bddGeneralLift\ and \cplex\ for the SOC-CC instances and different values of $\Omega$ and $t$. In each plot, an $(x,y)$ point represents the root gap for an instance given by the $x$-axis and the $y$-axis technique, respectively. 
Overall, we can see that \bddGeneralLift\ achieves a smaller or equal root gap to \cplex, however, the difference is considerably larger when $\Omega \geq 3$ and $t=0.1$.

The problems become more challenging with a larger $\Omega$ (i.e., a predominant quadratic term) due to a weak SOC relaxation and linearization. Thus, our procedure can potentially generate stronger cuts than \cplex.  In fact, the left plot in Figure \ref{fig:gap_config} shows all instances with $\Omega =1$ close to the diagonal, while problems with $\Omega \in \{3,5\}$ have larger gap reductions. Lastly, the right plot of Figure \ref{fig:gap_config} shows that \bddGeneralLift\ has significantly smaller gaps than \cplex\ over instances with a small $t$ (i.e., small solution sets). Our relaxed BDDs are close to exact BDDs in these cases, thus, making our cuts more effective.

The following tables show average results for each parameter configuration over the SOC-CC benchmark. We present results for our four variants (i.e., \bddFlow, \bddFlowLift, \bddGeneral, and \bddGeneralLift), \cplex, and best performing BDD-based cuts (i.e., \bddTarget\ and \bddTargetFlow). All techniques add cuts only at the root node of the tree search. Tables \ref{tab:cc_gap}, \ref{tab:cc_root_gap}, and \ref{tab:cc_gap} show the number of instances solved, average root gap, and average final gap for each $n$, $m$, $\Omega$, and $t$ combination, with $\maxwidth=4000$. Similarly,  Tables \ref{tab:cc_nodes} and \ref{tab:cc_time} show the average number of nodes in the branch-and-bound search and the average run time for the instances that all techniques solved to optimality. 

\aboverulesep=0ex
\belowrulesep=0.1ex
\renewcommand{\arraystretch}{1.15}

\begin{table}[htb]
	\centering
	\scriptsize
	\setlength{\tabcolsep}{4pt}
	\caption{Instances solved to Optimality comparison across all techniques for SOC-CC benchmark.}
	\begin{tabular}{rrrr|ccccccc}
    \toprule
          &       &       & \multicolumn{1}{r}{} & \multicolumn{7}{c}{\textbf{\# Instances Solved}} \\
    \midrule
    \multicolumn{1}{r|}{$t$} & \multicolumn{1}{r|}{$n$} & \multicolumn{1}{r|}{$m$} & $\Omega$     & \cplex & \bddFlow    & \bddFlowLift   & \bddGeneral    & \bddGeneralLift   & \bddTarget    & \bddTargetFlow \\
    \midrule
    \multicolumn{1}{r|}{\multirow{18}[6]{*}{0.1}} & \multicolumn{1}{r|}{\multirow{6}[2]{*}{75}} & \multicolumn{1}{r|}{\multirow{3}[1]{*}{10}} & 1     & \textbf{5} & \textbf{5} & \textbf{5} & \textbf{5} & \textbf{5} & \textbf{5} & \textbf{5} \\
    \multicolumn{1}{r|}{} & \multicolumn{1}{r|}{} & \multicolumn{1}{r|}{} & 3     & \textbf{5} & \textbf{5} & \textbf{5} & \textbf{5} & \textbf{5} & \textbf{5} & \textbf{5} \\
    \multicolumn{1}{r|}{} & \multicolumn{1}{r|}{} & \multicolumn{1}{r|}{} & 5     & \textbf{5} & \textbf{5} & \textbf{5} & \textbf{5} & \textbf{5} & \textbf{5} & \textbf{5} \\
    \cmidrule{3-11}\multicolumn{1}{r|}{} & \multicolumn{1}{r|}{} & \multicolumn{1}{r|}{\multirow{3}[1]{*}{20}} & 1     & \textbf{5} & \textbf{5} & \textbf{5} & \textbf{5} & \textbf{5} & \textbf{5} & \textbf{5} \\
    \multicolumn{1}{r|}{} & \multicolumn{1}{r|}{} & \multicolumn{1}{r|}{} & 3     & \textbf{5} & \textbf{5} & \textbf{5} & \textbf{5} & \textbf{5} & \textbf{5} & \textbf{5} \\
    \multicolumn{1}{r|}{} & \multicolumn{1}{r|}{} & \multicolumn{1}{r|}{} & 5     & \textbf{5} & \textbf{5} & \textbf{5} & \textbf{5} & \textbf{5} & \textbf{5} & \textbf{5} \\
\cmidrule{2-11}    \multicolumn{1}{r|}{} & \multicolumn{1}{r|}{\multirow{6}[2]{*}{100}} & \multicolumn{1}{r|}{\multirow{3}[1]{*}{10}} & 1     & \textbf{5} & \textbf{5} & \textbf{5} & \textbf{5} & \textbf{5} & \textbf{5} & \textbf{5} \\
    \multicolumn{1}{r|}{} & \multicolumn{1}{r|}{} & \multicolumn{1}{r|}{} & 3     & \textbf{5} & \textbf{5} & \textbf{5} & \textbf{5} & \textbf{5} & \textbf{5} & \textbf{5} \\
    \multicolumn{1}{r|}{} & \multicolumn{1}{r|}{} & \multicolumn{1}{r|}{} & 5     & \textbf{5} & \textbf{5} & \textbf{5} & \textbf{5} & \textbf{5} & \textbf{5} & \textbf{5} \\
    \cmidrule{3-11}\multicolumn{1}{r|}{} & \multicolumn{1}{r|}{} & \multicolumn{1}{r|}{\multirow{3}[1]{*}{20}} & 1     & 0     & 0     & 0     & 1     & 1     & \textbf{2} & \textbf{2} \\
    \multicolumn{1}{r|}{} & \multicolumn{1}{r|}{} & \multicolumn{1}{r|}{} & 3     & \textbf{5} & \textbf{5} & \textbf{5} & \textbf{5} & \textbf{5} & \textbf{5} & \textbf{5} \\
    \multicolumn{1}{r|}{} & \multicolumn{1}{r|}{} & \multicolumn{1}{r|}{} & 5     & \textbf{5} & \textbf{5} & \textbf{5} & \textbf{5} & \textbf{5} & \textbf{5} & \textbf{5} \\
\cmidrule{2-11}    \multicolumn{1}{r|}{} & \multicolumn{1}{r|}{\multirow{6}[2]{*}{125}} & \multicolumn{1}{r|}{\multirow{3}[1]{*}{10}} & 1     & 4     & 3     & 3     & 4     & 4     & \textbf{5} & 4 \\
    \multicolumn{1}{r|}{} & \multicolumn{1}{r|}{} & \multicolumn{1}{r|}{} & 3     & 0     & 0     & 0     & 0     & 0     & \textbf{1} & 0 \\
    \multicolumn{1}{r|}{} & \multicolumn{1}{r|}{} & \multicolumn{1}{r|}{} & 5     & 0     & 0     & \textbf{2} & 1     & 1     & 1     & 1 \\
    \cmidrule{3-11}\multicolumn{1}{r|}{} & \multicolumn{1}{r|}{} & \multicolumn{1}{r|}{\multirow{3}[1]{*}{20}} & 1     & 0     & 0     & 0     & 0     & 0     & 0     & 0 \\
    \multicolumn{1}{r|}{} & \multicolumn{1}{r|}{} & \multicolumn{1}{r|}{} & 3     & 0     & 0     & 1     & 1     & 1     & \textbf{2} & 1 \\
    \multicolumn{1}{r|}{} & \multicolumn{1}{r|}{} & \multicolumn{1}{r|}{} & 5     & 0     & 0     & 1     & 2     & 2     & \textbf{3} & 2 \\
    \midrule
    \multicolumn{1}{r|}{\multirow{18}[6]{*}{0.2}} & \multicolumn{1}{r|}{\multirow{6}[2]{*}{75}} & \multicolumn{1}{r|}{\multirow{3}[1]{*}{10}} & 1     & \textbf{5} & \textbf{5} & \textbf{5} & \textbf{5} & \textbf{5} & \textbf{5} & \textbf{5} \\
    \multicolumn{1}{r|}{} & \multicolumn{1}{r|}{} & \multicolumn{1}{r|}{} & 3     & 2     & 3     & 4     & \textbf{5} & \textbf{5} & \textbf{5} & \textbf{5} \\
    \multicolumn{1}{r|}{} & \multicolumn{1}{r|}{} & \multicolumn{1}{r|}{} & 5     & 1     & 1     & 3     & \textbf{5} & \textbf{5} & \textbf{5} & \textbf{5} \\
    \cmidrule{3-11}\multicolumn{1}{r|}{} & \multicolumn{1}{r|}{} & \multicolumn{1}{r|}{\multirow{3}[1]{*}{20}} & 1     & 2     & 2     & 3     & 3     & \textbf{4} & \textbf{4} & 3 \\
    \multicolumn{1}{r|}{} & \multicolumn{1}{r|}{} & \multicolumn{1}{r|}{} & 3     & 0     & 0     & 0     & 0     & \textbf{1} & 0     & \textbf{1} \\
    \multicolumn{1}{r|}{} & \multicolumn{1}{r|}{} & \multicolumn{1}{r|}{} & 5     & 0     & 0     & 0     & 0     & \textbf{1} & \textbf{1} & \textbf{1} \\
\cmidrule{2-11}    \multicolumn{1}{r|}{} & \multicolumn{1}{r|}{\multirow{6}[2]{*}{100}} & \multicolumn{1}{r|}{\multirow{3}[1]{*}{10}} & 1     & \textbf{5} & \textbf{5} & \textbf{5} & \textbf{5} & \textbf{5} & \textbf{5} & \textbf{5} \\
    \multicolumn{1}{r|}{} & \multicolumn{1}{r|}{} & \multicolumn{1}{r|}{} & 3     & 0     & 0     & 1     & 2     & 2     & \textbf{3} & 2 \\
    \multicolumn{1}{r|}{} & \multicolumn{1}{r|}{} & \multicolumn{1}{r|}{} & 5     & 0     & 0     & 0     & 0     & 0     & 0     & 0 \\
    \cmidrule{3-11}\multicolumn{1}{r|}{} & \multicolumn{1}{r|}{} & \multicolumn{1}{r|}{\multirow{3}[1]{*}{20}} & 1     & 0     & 0     & 1     & 1     & \textbf{2} & \textbf{2} & \textbf{2} \\
    \multicolumn{1}{r|}{} & \multicolumn{1}{r|}{} & \multicolumn{1}{r|}{} & 3     & 0     & 0     & 0     & 0     & 0     & 0     & 0 \\
    \multicolumn{1}{r|}{} & \multicolumn{1}{r|}{} & \multicolumn{1}{r|}{} & 5     & 0     & 0     & 0     & 0     & 0     & 0     & 0 \\
\cmidrule{2-11}    \multicolumn{1}{r|}{} & \multicolumn{1}{r|}{\multirow{6}[2]{*}{125}} & \multicolumn{1}{r|}{\multirow{3}[1]{*}{10}} & 1     & \textbf{5} & \textbf{5} & \textbf{5} & \textbf{5} & \textbf{5} & \textbf{5} & \textbf{5} \\
    \multicolumn{1}{r|}{} & \multicolumn{1}{r|}{} & \multicolumn{1}{r|}{} & 3     & 0     & 0     & 0     & 0     & 0     & 0     & 0 \\
    \multicolumn{1}{r|}{} & \multicolumn{1}{r|}{} & \multicolumn{1}{r|}{} & 5     & 0     & 0     & 0     & 0     & 0     & 0     & 0 \\
    \cmidrule{3-11}\multicolumn{1}{r|}{} & \multicolumn{1}{r|}{} & \multicolumn{1}{r|}{\multirow{3}[1]{*}{20}} & 1     & 0     & 0     & 0     & 0     & 0     & 0     & 0 \\
    \multicolumn{1}{r|}{} & \multicolumn{1}{r|}{} & \multicolumn{1}{r|}{} & 3     & 0     & 0     & 0     & 0     & 0     & 0     & 0 \\
    \multicolumn{1}{r|}{} & \multicolumn{1}{r|}{} & \multicolumn{1}{r|}{} & 5     & 0     & 0     & 0     & 0     & 0     & 0     & 0 \\
    \midrule
    \multicolumn{1}{r|}{\multirow{18}[6]{*}{0.3}} & \multicolumn{1}{r|}{\multirow{6}[2]{*}{75}} & \multicolumn{1}{r|}{\multirow{3}[1]{*}{10}} & 1     & \textbf{5} & \textbf{5} & \textbf{5} & \textbf{5} & \textbf{5} & \textbf{5} & \textbf{5} \\
    \multicolumn{1}{r|}{} & \multicolumn{1}{r|}{} & \multicolumn{1}{r|}{} & 3     & \textbf{5} & \textbf{5} & \textbf{5} & \textbf{5} & \textbf{5} & \textbf{5} & \textbf{5} \\
    \multicolumn{1}{r|}{} & \multicolumn{1}{r|}{} & \multicolumn{1}{r|}{} & 5     & 4     & 4     & \textbf{5} & \textbf{5} & \textbf{5} & \textbf{5} & \textbf{5} \\
    \cmidrule{3-11}\multicolumn{1}{r|}{} & \multicolumn{1}{r|}{} & \multicolumn{1}{r|}{\multirow{3}[1]{*}{20}} & 1     & \textbf{5} & \textbf{5} & \textbf{5} & \textbf{5} & \textbf{5} & \textbf{5} & \textbf{5} \\
    \multicolumn{1}{r|}{} & \multicolumn{1}{r|}{} & \multicolumn{1}{r|}{} & 3     & 0     & 0     & 1     & \textbf{4} & \textbf{4} & \textbf{4} & \textbf{4} \\
    \multicolumn{1}{r|}{} & \multicolumn{1}{r|}{} & \multicolumn{1}{r|}{} & 5     & 0     & 0     & 0     & 2     & 3     & \textbf{4} & \textbf{4} \\
\cmidrule{2-11}    \multicolumn{1}{r|}{} & \multicolumn{1}{r|}{\multirow{6}[2]{*}{100}} & \multicolumn{1}{r|}{\multirow{3}[1]{*}{10}} & 1     & \textbf{5} & \textbf{5} & \textbf{5} & \textbf{5} & \textbf{5} & \textbf{5} & \textbf{5} \\
    \multicolumn{1}{r|}{} & \multicolumn{1}{r|}{} & \multicolumn{1}{r|}{} & 3     & \textbf{5} & \textbf{5} & \textbf{5} & \textbf{5} & \textbf{5} & \textbf{5} & \textbf{5} \\
    \multicolumn{1}{r|}{} & \multicolumn{1}{r|}{} & \multicolumn{1}{r|}{} & 5     & \textbf{5} & \textbf{5} & \textbf{5} & \textbf{5} & \textbf{5} & \textbf{5} & \textbf{5} \\
    \cmidrule{3-11}\multicolumn{1}{r|}{} & \multicolumn{1}{r|}{} & \multicolumn{1}{r|}{\multirow{3}[1]{*}{20}} & 1     & 4     & 3     & 4     & \textbf{5} & \textbf{5} & \textbf{5} & \textbf{5} \\
    \multicolumn{1}{r|}{} & \multicolumn{1}{r|}{} & \multicolumn{1}{r|}{} & 3     & 0     & 0     & 0     & \textbf{1} & \textbf{1} & \textbf{1} & \textbf{1} \\
    \multicolumn{1}{r|}{} & \multicolumn{1}{r|}{} & \multicolumn{1}{r|}{} & 5     & 0     & 0     & 0     & 0     & 0     & 0     & 0 \\
\cmidrule{2-11}    \multicolumn{1}{r|}{} & \multicolumn{1}{r|}{\multirow{6}[2]{*}{125}} & \multicolumn{1}{r|}{\multirow{3}[1]{*}{10}} & 1     & \textbf{5} & \textbf{5} & \textbf{5} & \textbf{5} & \textbf{5} & \textbf{5} & \textbf{5} \\
    \multicolumn{1}{r|}{} & \multicolumn{1}{r|}{} & \multicolumn{1}{r|}{} & 3     & \textbf{5} & \textbf{5} & \textbf{5} & \textbf{5} & \textbf{5} & \textbf{5} & \textbf{5} \\
    \multicolumn{1}{r|}{} & \multicolumn{1}{r|}{} & \multicolumn{1}{r|}{} & 5     & \textbf{5} & \textbf{5} & \textbf{5} & \textbf{5} & \textbf{5} & \textbf{5} & \textbf{5} \\
    \cmidrule{3-11}\multicolumn{1}{r|}{} & \multicolumn{1}{r|}{} & \multicolumn{1}{r|}{\multirow{3}[1]{*}{20}} & 1     & \textbf{5} & 4     & \textbf{5} & \textbf{5} & \textbf{5} & 4     & \textbf{5} \\
    \multicolumn{1}{r|}{} & \multicolumn{1}{r|}{} & \multicolumn{1}{r|}{} & 3     & 0     & 0     & 0     & 0     & 0     & 0     & 0 \\
    \multicolumn{1}{r|}{} & \multicolumn{1}{r|}{} & \multicolumn{1}{r|}{} & 5     & 0     & 0     & 0     & 0     & 0     & 0     & 0 \\
    \midrule
    \multicolumn{4}{r|}{Total}    & 137   & 135   & 149   & 162   & 167   & \textbf{172} & 168 \\
    \bottomrule
    \end{tabular}%
 \label{tab:cc_solve}
\end{table}

\begin{table}[htb]
	\centering
	\scriptsize
	\setlength{\tabcolsep}{4pt}
	\caption{Root Gap comparison across all techniques for SOC-CC benchmark.}

    \begin{tabular}{rrrr|rrrrrrr}
    \toprule
          &       &       & \multicolumn{1}{r}{} & \multicolumn{7}{c}{\textbf{Root Gap (\%)}} \\
    \midrule
    \multicolumn{1}{r|}{$t$} & \multicolumn{1}{r|}{$n$} & \multicolumn{1}{r|}{$m$} & $\Omega$     & \cplex & \bddFlow    & \bddFlowLift   & \bddGeneral    & \bddGeneralLift   & \bddTarget    & \bddTargetFlow \\
    \midrule
    \multicolumn{1}{r|}{\multirow{18}[6]{*}{0.1}} & \multicolumn{1}{r|}{\multirow{6}[2]{*}{75}} & \multicolumn{1}{r|}{\multirow{3}[1]{*}{10}} & 1     & 7.6\% & 10.0\% & 6.9\% & 3.8\% & 3.8\% & \textbf{3.5\%} & 3.7\% \\
    \multicolumn{1}{r|}{} & \multicolumn{1}{r|}{} & \multicolumn{1}{r|}{} & 3     & 47.6\% & 45.4\% & 20.9\% & 16.4\% & 16.0\% & \textbf{15.6\%} & 16.0\% \\
    \multicolumn{1}{r|}{} & \multicolumn{1}{r|}{} & \multicolumn{1}{r|}{} & 5     & 56.1\% & 47.3\% & 22.0\% & 17.8\% & 17.4\% & 17.9\% & \textbf{17.3\%} \\
    \cmidrule{3-11} \multicolumn{1}{r|}{} & \multicolumn{1}{r|}{} & \multicolumn{1}{r|}{\multirow{3}[1]{*}{20}} & 1     & 54.7\% & 56.5\% & 54.9\% & \textbf{47.0\%} & 47.1\% & 47.1\% & 47.1\% \\
    \multicolumn{1}{r|}{} & \multicolumn{1}{r|}{} & \multicolumn{1}{r|}{} & 3     & 83.0\% & 75.7\% & 31.1\% & 25.3\% & 23.2\% & 30.0\% & \textbf{23.1\%} \\
    \multicolumn{1}{r|}{} & \multicolumn{1}{r|}{} & \multicolumn{1}{r|}{} & 5     & 81.3\% & 26.0\% & 8.2\% & \textbf{5.9\%} & 8.2\% & 16.6\% & 8.2\% \\
\cmidrule{2-11}    \multicolumn{1}{r|}{} & \multicolumn{1}{r|}{\multirow{6}[2]{*}{100}} & \multicolumn{1}{r|}{\multirow{3}[1]{*}{10}} & 1     & 6.1\% & 7.5\% & 6.5\% & 5.4\% & \textbf{5.3\%} & \textbf{5.3\%} & \textbf{5.3\%} \\
    \multicolumn{1}{r|}{} & \multicolumn{1}{r|}{} & \multicolumn{1}{r|}{} & 3     & 34.3\% & 34.0\% & 27.3\% & 23.3\% & \textbf{23.2\%} & 23.3\% & \textbf{23.2\%} \\
    \multicolumn{1}{r|}{} & \multicolumn{1}{r|}{} & \multicolumn{1}{r|}{} & 5     & 40.8\% & 39.7\% & 27.1\% & \textbf{22.4\%} & 22.6\% & \textbf{22.4\%} & \textbf{22.4\%} \\
    \cmidrule{3-11} \multicolumn{1}{r|}{} & \multicolumn{1}{r|}{} & \multicolumn{1}{r|}{\multirow{3}[1]{*}{20}} & 1     & 26.9\% & 28.8\% & 26.7\% & \textbf{23.2\%} & \textbf{23.2\%} & \textbf{23.2\%} & \textbf{23.2\%} \\
    \multicolumn{1}{r|}{} & \multicolumn{1}{r|}{} & \multicolumn{1}{r|}{} & 3     & 65.3\% & 62.9\% & 48.6\% & \textbf{46.0\%} & \textbf{46.0\%} & \textbf{46.0\%} & \textbf{46.0\%} \\
    \multicolumn{1}{r|}{} & \multicolumn{1}{r|}{} & \multicolumn{1}{r|}{} & 5     & 67.5\% & 59.6\% & 42.5\% & \textbf{39.7\%} & 40.0\% & \textbf{39.7\%} & 39.9\% \\
\cmidrule{2-11}    \multicolumn{1}{r|}{} & \multicolumn{1}{r|}{\multirow{6}[2]{*}{125}} & \multicolumn{1}{r|}{\multirow{3}[1]{*}{10}} & 1     & 4.4\% & 5.5\% & 4.7\% & \textbf{4.1\%} & \textbf{4.1\%} & \textbf{4.1\%} & \textbf{4.1\%} \\
    \multicolumn{1}{r|}{} & \multicolumn{1}{r|}{} & \multicolumn{1}{r|}{} & 3     & 33.5\% & 33.3\% & 30.2\% & \textbf{29.2\%} & \textbf{29.2\%} & \textbf{29.2\%} & \textbf{29.2\%} \\
    \multicolumn{1}{r|}{} & \multicolumn{1}{r|}{} & \multicolumn{1}{r|}{} & 5     & 38.0\% & 36.4\% & 32.1\% & 31.2\% & 31.2\% & \textbf{31.1\%} & 31.2\% \\
    \cmidrule{3-11} \multicolumn{1}{r|}{} & \multicolumn{1}{r|}{} & \multicolumn{1}{r|}{\multirow{3}[1]{*}{20}} & 1     & 18.9\% & 19.8\% & 19.3\% & \textbf{18.0\%} & \textbf{18.0\%} & \textbf{18.0\%} & \textbf{18.0\%} \\
    \multicolumn{1}{r|}{} & \multicolumn{1}{r|}{} & \multicolumn{1}{r|}{} & 3     & 56.4\% & 56.4\% & 53.0\% & 47.8\% & \textbf{47.7\%} & 47.8\% & \textbf{47.7\%} \\
    \multicolumn{1}{r|}{} & \multicolumn{1}{r|}{} & \multicolumn{1}{r|}{} & 5     & 56.2\% & 55.0\% & 43.0\% & \textbf{40.4\%} & \textbf{40.4\%} & \textbf{40.4\%} & \textbf{40.4\%} \\
    \midrule
    \cmidrule{3-11} \multicolumn{1}{r|}{\multirow{18}[6]{*}{0.2}} & \multicolumn{1}{r|}{\multirow{6}[2]{*}{75}} & \multicolumn{1}{r|}{\multirow{3}[1]{*}{10}} & 1     & 2.9\% & 4.5\% & 3.6\% & \textbf{2.0\%} & \textbf{2.0\%} & \textbf{2.0\%} & \textbf{2.0\%} \\
    \multicolumn{1}{r|}{} & \multicolumn{1}{r|}{} & \multicolumn{1}{r|}{} & 3     & 10.1\% & 10.7\% & 9.0\% & \textbf{5.4\%} & \textbf{5.4\%} & \textbf{5.4\%} & \textbf{5.4\%} \\
    \multicolumn{1}{r|}{} & \multicolumn{1}{r|}{} & \multicolumn{1}{r|}{} & 5     & 13.1\% & 13.8\% & 11.1\% & \textbf{7.0\%} & \textbf{7.0\%} & \textbf{7.0\%} & \textbf{7.0\%} \\
    \cmidrule{3-11} \multicolumn{1}{r|}{} & \multicolumn{1}{r|}{} & \multicolumn{1}{r|}{\multirow{3}[1]{*}{20}} & 1     & 11.6\% & 13.9\% & 11.4\% & \textbf{7.8\%} & \textbf{7.8\%} & \textbf{7.8\%} & \textbf{7.8\%} \\
    \multicolumn{1}{r|}{} & \multicolumn{1}{r|}{} & \multicolumn{1}{r|}{} & 3     & 30.6\% & 31.4\% & 30.2\% & \textbf{21.0\%} & \textbf{21.0\%} & \textbf{21.0\%} & \textbf{21.0\%} \\
    \multicolumn{1}{r|}{} & \multicolumn{1}{r|}{} & \multicolumn{1}{r|}{} & 5     & 32.5\% & 32.9\% & 29.2\% & \textbf{20.5\%} & \textbf{20.5\%} & \textbf{20.5\%} & \textbf{20.5\%} \\
\cmidrule{2-11}    \multicolumn{1}{r|}{} & \multicolumn{1}{r|}{\multirow{6}[2]{*}{100}} & \multicolumn{1}{r|}{\multirow{3}[1]{*}{10}} & 1     & 2.6\% & 3.8\% & 3.1\% & \textbf{2.4\%} & \textbf{2.4\%} & \textbf{2.4\%} & \textbf{2.4\%} \\
    \multicolumn{1}{r|}{} & \multicolumn{1}{r|}{} & \multicolumn{1}{r|}{} & 3     & 8.3\% & 8.8\% & 8.0\% & \textbf{6.2\%} & \textbf{6.2\%} & 6.3\% & \textbf{6.2\%} \\
    \multicolumn{1}{r|}{} & \multicolumn{1}{r|}{} & \multicolumn{1}{r|}{} & 5     & 11.8\% & 12.1\% & 11.5\% & \textbf{9.8\%} & \textbf{9.8\%} & \textbf{9.8\%} & \textbf{9.8\%} \\
    \cmidrule{3-11} \multicolumn{1}{r|}{} & \multicolumn{1}{r|}{} & \multicolumn{1}{r|}{\multirow{3}[1]{*}{20}} & 1     & 7.7\% & 9.2\% & 8.6\% & \textbf{6.5\%} & \textbf{6.5\%} & \textbf{6.5\%} & \textbf{6.5\%} \\
    \multicolumn{1}{r|}{} & \multicolumn{1}{r|}{} & \multicolumn{1}{r|}{} & 3     & 24.6\% & 24.9\% & 24.1\% & \textbf{20.7\%} & \textbf{20.7\%} & \textbf{20.7\%} & \textbf{20.7\%} \\
    \multicolumn{1}{r|}{} & \multicolumn{1}{r|}{} & \multicolumn{1}{r|}{} & 5     & 30.1\% & 30.2\% & 29.1\% & \textbf{25.6\%} & \textbf{25.6\%} & \textbf{25.6\%} & \textbf{25.6\%} \\
\cmidrule{2-11}    \multicolumn{1}{r|}{} & \multicolumn{1}{r|}{\multirow{6}[2]{*}{125}} & \multicolumn{1}{r|}{\multirow{3}[1]{*}{10}} & 1     & \textbf{1.6\%} & 2.8\% & 2.6\% & 2.4\% & 2.4\% & 2.4\% & 2.4\% \\
    \multicolumn{1}{r|}{} & \multicolumn{1}{r|}{} & \multicolumn{1}{r|}{} & 3     & 7.7\% & 7.9\% & 7.7\% & 7.4\% & 7.4\% & \textbf{7.3\%} & 7.4\% \\
    \multicolumn{1}{r|}{} & \multicolumn{1}{r|}{} & \multicolumn{1}{r|}{} & 5     & 9.4\% & 9.5\% & 9.2\% & \textbf{8.6\%} & \textbf{8.6\%} & \textbf{8.6\%} & \textbf{8.6\%} \\
    \cmidrule{3-11} \multicolumn{1}{r|}{} & \multicolumn{1}{r|}{} & \multicolumn{1}{r|}{\multirow{3}[1]{*}{20}} & 1     & 5.4\% & 6.3\% & 6.2\% & \textbf{5.5\%} & \textbf{5.5\%} & \textbf{5.5\%} & \textbf{5.5\%} \\
    \multicolumn{1}{r|}{} & \multicolumn{1}{r|}{} & \multicolumn{1}{r|}{} & 3     & 20.6\% & 20.7\% & 20.5\% & \textbf{19.0\%} & \textbf{19.0\%} & \textbf{19.0\%} & \textbf{19.0\%} \\
    \multicolumn{1}{r|}{} & \multicolumn{1}{r|}{} & \multicolumn{1}{r|}{} & 5     & 25.1\% & 25.2\% & 24.9\% & \textbf{22.9\%} & \textbf{22.9\%} & \textbf{22.9\%} & \textbf{22.9\%} \\
    \midrule
    \multicolumn{1}{r|}{\multirow{18}[6]{*}{0.3}} & \multicolumn{1}{r|}{\multirow{6}[2]{*}{75}} & \multicolumn{1}{r|}{\multirow{3}[1]{*}{10}} & 1     & 1.2\% & 2.7\% & 1.9\% & \textbf{0.9\%} & \textbf{0.9\%} & \textbf{0.9\%} & \textbf{0.9\%} \\
    \multicolumn{1}{r|}{} & \multicolumn{1}{r|}{} & \multicolumn{1}{r|}{} & 3     & 3.6\% & 4.4\% & 3.4\% & \textbf{1.8\%} & \textbf{1.8\%} & \textbf{1.8\%} & \textbf{1.8\%} \\
    \multicolumn{1}{r|}{} & \multicolumn{1}{r|}{} & \multicolumn{1}{r|}{} & 5     & 4.3\% & 5.0\% & 4.0\% & \textbf{2.1\%} & \textbf{2.1\%} & \textbf{2.1\%} & \textbf{2.1\%} \\
    \cmidrule{3-11} \multicolumn{1}{r|}{} & \multicolumn{1}{r|}{} & \multicolumn{1}{r|}{\multirow{3}[1]{*}{20}} & 1     & 3.7\% & 5.5\% & 4.4\% & 2.4\% & 2.4\% & \textbf{2.3\%} & 2.4\% \\
    \multicolumn{1}{r|}{} & \multicolumn{1}{r|}{} & \multicolumn{1}{r|}{} & 3     & 7.7\% & 9.0\% & 7.5\% & \textbf{4.3\%} & \textbf{4.3\%} & \textbf{4.3\%} & \textbf{4.3\%} \\
    \multicolumn{1}{r|}{} & \multicolumn{1}{r|}{} & \multicolumn{1}{r|}{} & 5     & 8.8\% & 9.8\% & 8.0\% & \textbf{4.8\%} & \textbf{4.8\%} & \textbf{4.8\%} & \textbf{4.8\%} \\
\cmidrule{2-11}    \multicolumn{1}{r|}{} & \multicolumn{1}{r|}{\multirow{6}[2]{*}{100}} & \multicolumn{1}{r|}{\multirow{3}[1]{*}{10}} & 1     & \textbf{1.0\%} & 2.1\% & 1.7\% & 1.2\% & 1.2\% & 1.2\% & 1.2\% \\
    \multicolumn{1}{r|}{} & \multicolumn{1}{r|}{} & \multicolumn{1}{r|}{} & 3     & 2.3\% & 3.1\% & 2.8\% & \textbf{2.1\%} & \textbf{2.1\%} & \textbf{2.1\%} & \textbf{2.1\%} \\
    \multicolumn{1}{r|}{} & \multicolumn{1}{r|}{} & \multicolumn{1}{r|}{} & 5     & 2.9\% & 3.6\% & 3.5\% & \textbf{2.6\%} & \textbf{2.6\%} & \textbf{2.6\%} & \textbf{2.6\%} \\
    \cmidrule{3-11} \multicolumn{1}{r|}{} & \multicolumn{1}{r|}{} & \multicolumn{1}{r|}{\multirow{3}[1]{*}{20}} & 1     & 3.3\% & 4.6\% & 3.8\% & \textbf{3.1\%} & \textbf{3.1\%} & \textbf{3.1\%} & \textbf{3.1\%} \\
    \multicolumn{1}{r|}{} & \multicolumn{1}{r|}{} & \multicolumn{1}{r|}{} & 3     & 6.0\% & 6.7\% & 6.0\% & \textbf{4.6\%} & \textbf{4.6\%} & \textbf{4.6\%} & \textbf{4.6\%} \\
    \multicolumn{1}{r|}{} & \multicolumn{1}{r|}{} & \multicolumn{1}{r|}{} & 5     & 7.9\% & 8.6\% & 7.9\% & \textbf{6.1\%} & \textbf{6.1\%} & \textbf{6.1\%} & \textbf{6.1\%} \\
\cmidrule{2-11}    \multicolumn{1}{r|}{} & \multicolumn{1}{r|}{\multirow{6}[2]{*}{125}} & \multicolumn{1}{r|}{\multirow{3}[1]{*}{10}} & 1     & \textbf{0.3\%} & 1.1\% & 1.0\% & 0.9\% & 0.9\% & 0.9\% & 0.9\% \\
    \multicolumn{1}{r|}{} & \multicolumn{1}{r|}{} & \multicolumn{1}{r|}{} & 3     & \textbf{1.6\%} & 2.2\% & 2.1\% & 1.9\% & 1.9\% & 1.9\% & 1.9\% \\
    \multicolumn{1}{r|}{} & \multicolumn{1}{r|}{} & \multicolumn{1}{r|}{} & 5     & \textbf{1.8\%} & 2.3\% & 2.0\% & 2.0\% & 2.0\% & 2.0\% & 2.0\% \\
    \cmidrule{3-11} \multicolumn{1}{r|}{} & \multicolumn{1}{r|}{} & \multicolumn{1}{r|}{\multirow{3}[1]{*}{20}} & 1     & \textbf{1.7\%} & 2.8\% & 2.6\% & 2.4\% & 2.4\% & 2.4\% & 2.4\% \\
    \multicolumn{1}{r|}{} & \multicolumn{1}{r|}{} & \multicolumn{1}{r|}{} & 3     & 4.8\% & 5.4\% & 5.2\% & \textbf{4.5\%} & \textbf{4.5\%} & \textbf{4.5\%} & \textbf{4.5\%} \\
    \multicolumn{1}{r|}{} & \multicolumn{1}{r|}{} & \multicolumn{1}{r|}{} & 5     & 6.7\% & 7.2\% & 7.0\% & \textbf{6.0\%} & 6.1\% & \textbf{6.0\%} & 6.1\% \\
    \midrule
    \multicolumn{4}{r|}{Average}  & 20.4\% & 19.5\% & 15.4\% & \textbf{13.0\%} & \textbf{13.0\%} & 13.3\% & \textbf{13.0\%} \\
    \bottomrule
    \end{tabular}%

 \label{tab:cc_root_gap}
\end{table}

\begin{table}[htb]
	\centering
	\scriptsize
	\setlength{\tabcolsep}{4pt}
	\caption{Final Gap comparison across all techniques for SOC-CC benchmark.}
	\begin{tabular}{rrrr|rrrrrrr}
    \toprule
          &       &       & \multicolumn{1}{r}{} & \multicolumn{7}{c}{\textbf{Final Gap (\%)}} \\
    \midrule
    \multicolumn{1}{r|}{$t$} & \multicolumn{1}{r|}{$n$} & \multicolumn{1}{r|}{$m$} & $\Omega$     & \cplex & \bddFlow    & \bddFlowLift   & \bddGeneral    & \bddGeneralLift   & \bddTarget    & \bddTargetFlow \\
    \midrule
    \multicolumn{1}{r|}{\multirow{18}[6]{*}{0.1}} & \multicolumn{1}{r|}{\multirow{6}[2]{*}{75}} & \multicolumn{1}{r|}{\multirow{3}[1]{*}{10}} & 1     & \textbf{0.0\%} & \textbf{0.0\%} & \textbf{0.0\%} & \textbf{0.0\%} & \textbf{0.0\%} & \textbf{0.0\%} & \textbf{0.0\%} \\
    \multicolumn{1}{r|}{} & \multicolumn{1}{r|}{} & \multicolumn{1}{r|}{} & 3     & \textbf{0.0\%} & \textbf{0.0\%} & \textbf{0.0\%} & \textbf{0.0\%} & \textbf{0.0\%} & \textbf{0.0\%} & \textbf{0.0\%} \\
    \multicolumn{1}{r|}{} & \multicolumn{1}{r|}{} & \multicolumn{1}{r|}{} & 5     & \textbf{0.0\%} & \textbf{0.0\%} & \textbf{0.0\%} & \textbf{0.0\%} & \textbf{0.0\%} & \textbf{0.0\%} & \textbf{0.0\%} \\
    \cmidrule{3-11} \multicolumn{1}{r|}{} & \multicolumn{1}{r|}{} & \multicolumn{1}{r|}{\multirow{3}[1]{*}{20}} & 1     & \textbf{0.0\%} & \textbf{0.0\%} & \textbf{0.0\%} & \textbf{0.0\%} & \textbf{0.0\%} & \textbf{0.0\%} & \textbf{0.0\%} \\
    \multicolumn{1}{r|}{} & \multicolumn{1}{r|}{} & \multicolumn{1}{r|}{} & 3     & \textbf{0.0\%} & \textbf{0.0\%} & \textbf{0.0\%} & \textbf{0.0\%} & \textbf{0.0\%} & \textbf{0.0\%} & \textbf{0.0\%} \\
    \multicolumn{1}{r|}{} & \multicolumn{1}{r|}{} & \multicolumn{1}{r|}{} & 5     & \textbf{0.0\%} & \textbf{0.0\%} & \textbf{0.0\%} & \textbf{0.0\%} & \textbf{0.0\%} & \textbf{0.0\%} & \textbf{0.0\%} \\
\cmidrule{2-11}    \multicolumn{1}{r|}{} & \multicolumn{1}{r|}{\multirow{6}[2]{*}{100}} & \multicolumn{1}{r|}{\multirow{3}[1]{*}{10}} & 1     & \textbf{0.0\%} & \textbf{0.0\%} & \textbf{0.0\%} & \textbf{0.0\%} & \textbf{0.0\%} & \textbf{0.0\%} & \textbf{0.0\%} \\
    \multicolumn{1}{r|}{} & \multicolumn{1}{r|}{} & \multicolumn{1}{r|}{} & 3     & \textbf{0.0\%} & \textbf{0.0\%} & \textbf{0.0\%} & \textbf{0.0\%} & \textbf{0.0\%} & \textbf{0.0\%} & \textbf{0.0\%} \\
    \multicolumn{1}{r|}{} & \multicolumn{1}{r|}{} & \multicolumn{1}{r|}{} & 5     & \textbf{0.0\%} & \textbf{0.0\%} & \textbf{0.0\%} & \textbf{0.0\%} & \textbf{0.0\%} & \textbf{0.0\%} & \textbf{0.0\%} \\
    \cmidrule{3-11} \multicolumn{1}{r|}{} & \multicolumn{1}{r|}{} & \multicolumn{1}{r|}{\multirow{3}[1]{*}{20}} & 1     & 17.4\% & 18.4\% & 12.4\% & 7.5\% & 7.5\% & \textbf{5.1\%} & 5.5\% \\
    \multicolumn{1}{r|}{} & \multicolumn{1}{r|}{} & \multicolumn{1}{r|}{} & 3     & \textbf{0.0\%} & \textbf{0.0\%} & \textbf{0.0\%} & \textbf{0.0\%} & \textbf{0.0\%} & \textbf{0.0\%} & \textbf{0.0\%} \\
    \multicolumn{1}{r|}{} & \multicolumn{1}{r|}{} & \multicolumn{1}{r|}{} & 5     & \textbf{0.0\%} & \textbf{0.0\%} & \textbf{0.0\%} & \textbf{0.0\%} & \textbf{0.0\%} & \textbf{0.0\%} & \textbf{0.0\%} \\
\cmidrule{2-11}    \multicolumn{1}{r|}{} & \multicolumn{1}{r|}{\multirow{6}[2]{*}{125}} & \multicolumn{1}{r|}{\multirow{3}[1]{*}{10}} & 1     & 0.3\% & 0.6\% & 0.7\% & 0.3\% & 0.4\% & \textbf{0.0\%} & 0.2\% \\
    \multicolumn{1}{r|}{} & \multicolumn{1}{r|}{} & \multicolumn{1}{r|}{} & 3     & 25.1\% & 22.8\% & 17.6\% & 15.0\% & 16.1\% & \textbf{14.5\%} & 16.0\% \\
    \multicolumn{1}{r|}{} & \multicolumn{1}{r|}{} & \multicolumn{1}{r|}{} & 5     & 28.0\% & 23.7\% & 13.0\% & 11.3\% & \textbf{11.3\%} & 11.5\% & \textbf{11.3\%} \\
    \cmidrule{3-11} \multicolumn{1}{r|}{} & \multicolumn{1}{r|}{} & \multicolumn{1}{r|}{\multirow{3}[1]{*}{20}} & 1     & 16.0\% & 16.9\% & 16.5\% & 14.6\% & 14.4\% & 14.4\% & \textbf{14.2\%} \\
    \multicolumn{1}{r|}{} & \multicolumn{1}{r|}{} & \multicolumn{1}{r|}{} & 3     & 52.0\% & 50.5\% & 32.3\% & 22.5\% & 21.8\% & \textbf{19.8\%} & 22.1\% \\
    \multicolumn{1}{r|}{} & \multicolumn{1}{r|}{} & \multicolumn{1}{r|}{} & 5     & 49.4\% & 28.3\% & 10.4\% & 6.7\% & 4.1\% & \textbf{2.5\%} & 6.9\% \\
    \midrule
    \cmidrule{3-11} \multicolumn{1}{r|}{\multirow{18}[6]{*}{0.2}} & \multicolumn{1}{r|}{\multirow{6}[2]{*}{75}} & \multicolumn{1}{r|}{\multirow{3}[1]{*}{10}} & 1     & \textbf{0.0\%} & \textbf{0.0\%} & \textbf{0.0\%} & \textbf{0.0\%} & \textbf{0.0\%} & \textbf{0.0\%} & \textbf{0.0\%} \\
    \multicolumn{1}{r|}{} & \multicolumn{1}{r|}{} & \multicolumn{1}{r|}{} & 3     & 1.4\% & 0.6\% & 0.1\% & \textbf{0.0\%} & \textbf{0.0\%} & \textbf{0.0\%} & \textbf{0.0\%} \\
    \multicolumn{1}{r|}{} & \multicolumn{1}{r|}{} & \multicolumn{1}{r|}{} & 5     & 4.5\% & 3.6\% & 1.2\% & \textbf{0.0\%} & \textbf{0.0\%} & \textbf{0.0\%} & \textbf{0.0\%} \\
    \cmidrule{3-11} \multicolumn{1}{r|}{} & \multicolumn{1}{r|}{} & \multicolumn{1}{r|}{\multirow{3}[1]{*}{20}} & 1     & 3.2\% & 3.8\% & 2.5\% & 1.4\% & \textbf{0.5\%} & 0.6\% & 0.7\% \\
    \multicolumn{1}{r|}{} & \multicolumn{1}{r|}{} & \multicolumn{1}{r|}{} & 3     & 24.5\% & 23.7\% & 22.7\% & 13.6\% & 12.9\% & 13.6\% & \textbf{12.5\%} \\
    \multicolumn{1}{r|}{} & \multicolumn{1}{r|}{} & \multicolumn{1}{r|}{} & 5     & 25.1\% & 25.8\% & 21.9\% & 12.5\% & 12.3\% & 12.1\% & \textbf{12.0\%} \\
\cmidrule{2-11}    \multicolumn{1}{r|}{} & \multicolumn{1}{r|}{\multirow{6}[2]{*}{100}} & \multicolumn{1}{r|}{\multirow{3}[1]{*}{10}} & 1     & \textbf{0.0\%} & \textbf{0.0\%} & \textbf{0.0\%} & \textbf{0.0\%} & \textbf{0.0\%} & \textbf{0.0\%} & \textbf{0.0\%} \\
    \multicolumn{1}{r|}{} & \multicolumn{1}{r|}{} & \multicolumn{1}{r|}{} & 3     & 4.4\% & 4.5\% & 3.1\% & 1.8\% & 1.3\% & \textbf{1.3\%} & 1.5\% \\
    \multicolumn{1}{r|}{} & \multicolumn{1}{r|}{} & \multicolumn{1}{r|}{} & 5     & 8.4\% & 8.9\% & 8.0\% & 6.2\% & 6.1\% & \textbf{6.0\%} & \textbf{6.0\%} \\
    \cmidrule{3-11} \multicolumn{1}{r|}{} & \multicolumn{1}{r|}{} & \multicolumn{1}{r|}{\multirow{3}[1]{*}{20}} & 1     & 3.6\% & 4.6\% & 3.8\% & 2.1\% & 2.0\% & \textbf{1.7\%} & 1.9\% \\
    \multicolumn{1}{r|}{} & \multicolumn{1}{r|}{} & \multicolumn{1}{r|}{} & 3     & 21.3\% & 22.3\% & 21.6\% & 17.6\% & 17.4\% & \textbf{17.2\%} & 17.3\% \\
    \multicolumn{1}{r|}{} & \multicolumn{1}{r|}{} & \multicolumn{1}{r|}{} & 5     & 27.3\% & 28.0\% & 26.5\% & 22.7\% & 22.5\% & 22.5\% & \textbf{22.4\%} \\
\cmidrule{2-11}    \multicolumn{1}{r|}{} & \multicolumn{1}{r|}{\multirow{6}[2]{*}{125}} & \multicolumn{1}{r|}{\multirow{3}[1]{*}{10}} & 1     & \textbf{0.0\%} & \textbf{0.0\%} & \textbf{0.0\%} & \textbf{0.0\%} & \textbf{0.0\%} & \textbf{0.0\%} & \textbf{0.0\%} \\
    \multicolumn{1}{r|}{} & \multicolumn{1}{r|}{} & \multicolumn{1}{r|}{} & 3     & 5.4\% & 5.7\% & 5.4\% & 5.1\% & \textbf{5.0\%} & 5.1\% & 5.1\% \\
    \multicolumn{1}{r|}{} & \multicolumn{1}{r|}{} & \multicolumn{1}{r|}{} & 5     & 7.2\% & 7.7\% & 7.7\% & 7.2\% & 7.1\% & \textbf{7.0\%} & 7.1\% \\
    \cmidrule{3-11} \multicolumn{1}{r|}{} & \multicolumn{1}{r|}{} & \multicolumn{1}{r|}{\multirow{3}[1]{*}{20}} & 1     & 3.6\% & 4.3\% & 4.1\% & 3.5\% & \textbf{3.2\%} & 3.3\% & 3.5\% \\
    \multicolumn{1}{r|}{} & \multicolumn{1}{r|}{} & \multicolumn{1}{r|}{} & 3     & 19.2\% & 19.9\% & 19.7\% & 17.8\% & 17.8\% & \textbf{17.7\%} & 17.8\% \\
    \multicolumn{1}{r|}{} & \multicolumn{1}{r|}{} & \multicolumn{1}{r|}{} & 5     & 23.7\% & 24.5\% & 24.2\% & 21.8\% & \textbf{21.7\%} & \textbf{21.7\%} & \textbf{21.7\%} \\
    \midrule
    \multicolumn{1}{r|}{\multirow{18}[6]{*}{0.3}} & \multicolumn{1}{r|}{\multirow{6}[2]{*}{75}} & \multicolumn{1}{r|}{\multirow{3}[1]{*}{10}} & 1     & \textbf{0.0\%} & \textbf{0.0\%} & \textbf{0.0\%} & \textbf{0.0\%} & \textbf{0.0\%} & \textbf{0.0\%} & \textbf{0.0\%} \\
    \multicolumn{1}{r|}{} & \multicolumn{1}{r|}{} & \multicolumn{1}{r|}{} & 3     & \textbf{0.0\%} & \textbf{0.0\%} & \textbf{0.0\%} & \textbf{0.0\%} & \textbf{0.0\%} & \textbf{0.0\%} & \textbf{0.0\%} \\
    \multicolumn{1}{r|}{} & \multicolumn{1}{r|}{} & \multicolumn{1}{r|}{} & 5     & 0.3\% & 0.4\% & \textbf{0.0\%} & \textbf{0.0\%} & \textbf{0.0\%} & \textbf{0.0\%} & \textbf{0.0\%} \\
    \cmidrule{3-11} \multicolumn{1}{r|}{} & \multicolumn{1}{r|}{} & \multicolumn{1}{r|}{\multirow{3}[1]{*}{20}} & 1     & \textbf{0.0\%} & \textbf{0.0\%} & \textbf{0.0\%} & \textbf{0.0\%} & \textbf{0.0\%} & \textbf{0.0\%} & \textbf{0.0\%} \\
    \multicolumn{1}{r|}{} & \multicolumn{1}{r|}{} & \multicolumn{1}{r|}{} & 3     & 3.1\% & 3.5\% & 2.1\% & 0.6\% & 0.4\% & \textbf{0.2\%} & \textbf{0.2\%} \\
    \multicolumn{1}{r|}{} & \multicolumn{1}{r|}{} & \multicolumn{1}{r|}{} & 5     & 4.8\% & 5.4\% & 3.5\% & 1.0\% & 0.5\% & 0.5\% & \textbf{0.4\%} \\
\cmidrule{2-11}    \multicolumn{1}{r|}{} & \multicolumn{1}{r|}{\multirow{6}[2]{*}{100}} & \multicolumn{1}{r|}{\multirow{3}[1]{*}{10}} & 1     & \textbf{0.0\%} & \textbf{0.0\%} & \textbf{0.0\%} & \textbf{0.0\%} & \textbf{0.0\%} & \textbf{0.0\%} & \textbf{0.0\%} \\
    \multicolumn{1}{r|}{} & \multicolumn{1}{r|}{} & \multicolumn{1}{r|}{} & 3     & \textbf{0.0\%} & \textbf{0.0\%} & \textbf{0.0\%} & \textbf{0.0\%} & \textbf{0.0\%} & \textbf{0.0\%} & \textbf{0.0\%} \\
    \multicolumn{1}{r|}{} & \multicolumn{1}{r|}{} & \multicolumn{1}{r|}{} & 5     & \textbf{0.0\%} & \textbf{0.0\%} & \textbf{0.0\%} & \textbf{0.0\%} & \textbf{0.0\%} & \textbf{0.0\%} & \textbf{0.0\%} \\
    \cmidrule{3-11} \multicolumn{1}{r|}{} & \multicolumn{1}{r|}{} & \multicolumn{1}{r|}{\multirow{3}[1]{*}{20}} & 1     & 0.2\% & 0.4\% & 0.3\% & \textbf{0.0\%} & \textbf{0.0\%} & \textbf{0.0\%} & \textbf{0.0\%} \\
    \multicolumn{1}{r|}{} & \multicolumn{1}{r|}{} & \multicolumn{1}{r|}{} & 3     & 3.4\% & 3.8\% & 3.3\% & 1.9\% & \textbf{1.8\%} & \textbf{1.8\%} & 1.9\% \\
    \multicolumn{1}{r|}{} & \multicolumn{1}{r|}{} & \multicolumn{1}{r|}{} & 5     & 5.9\% & 6.2\% & 5.7\% & 4.0\% & \textbf{3.9\%} & \textbf{3.9\%} & \textbf{3.9\%} \\
\cmidrule{2-11}    \multicolumn{1}{r|}{} & \multicolumn{1}{r|}{\multirow{6}[2]{*}{125}} & \multicolumn{1}{r|}{\multirow{3}[1]{*}{10}} & 1     & \textbf{0.0\%} & \textbf{0.0\%} & \textbf{0.0\%} & \textbf{0.0\%} & \textbf{0.0\%} & \textbf{0.0\%} & \textbf{0.0\%} \\
    \multicolumn{1}{r|}{} & \multicolumn{1}{r|}{} & \multicolumn{1}{r|}{} & 3     & \textbf{0.0\%} & \textbf{0.0\%} & \textbf{0.0\%} & \textbf{0.0\%} & \textbf{0.0\%} & \textbf{0.0\%} & \textbf{0.0\%} \\
    \multicolumn{1}{r|}{} & \multicolumn{1}{r|}{} & \multicolumn{1}{r|}{} & 5     & \textbf{0.0\%} & \textbf{0.0\%} & \textbf{0.0\%} & \textbf{0.0\%} & \textbf{0.0\%} & \textbf{0.0\%} & \textbf{0.0\%} \\
    \cmidrule{3-11} \multicolumn{1}{r|}{} & \multicolumn{1}{r|}{} & \multicolumn{1}{r|}{\multirow{3}[1]{*}{20}} & 1     & \textbf{0.0\%} & 0.1\% & \textbf{0.0\%} & \textbf{0.0\%} & \textbf{0.0\%} & 0.1\% & \textbf{0.0\%} \\
    \multicolumn{1}{r|}{} & \multicolumn{1}{r|}{} & \multicolumn{1}{r|}{} & 3     & 3.3\% & 3.9\% & 3.4\% & 3.0\% & 2.9\% & \textbf{2.8\%} & \textbf{2.8\%} \\
    \multicolumn{1}{r|}{} & \multicolumn{1}{r|}{} & \multicolumn{1}{r|}{} & 5     & 5.5\% & 5.9\% & 5.6\% & 4.8\% & 4.8\% & \textbf{4.7\%} & \textbf{4.7\%} \\
    \midrule
    \multicolumn{4}{r|}{Average}  & 7.4\% & 7.0\% & 5.5\% & 4.2\% & 4.1\% & \textbf{3.9\%} & 4.1\% \\
    \bottomrule
    \end{tabular}%
  \label{tab:cc_gap}
\end{table}

\begin{table}[htb]
	\centering
	\scriptsize
	\setlength{\tabcolsep}{2pt}
	\caption{Nodes explored comparison across all techniques for SOC-CC benchmark.}
	\begin{tabular}{rrrr|rrrrrrr}
    \toprule
          &       &       & \multicolumn{1}{r}{} & \multicolumn{7}{c}{\textbf{\# Nodes Explored}} \\
    \midrule
    \multicolumn{1}{r|}{$t$} & \multicolumn{1}{r|}{$n$} & \multicolumn{1}{r|}{$m$} & $\Omega$     & \cplex & \bddFlow    & \bddFlowLift   & \bddGeneral    & \bddGeneralLift   & \bddTarget    & \bddTargetFlow \\
    \midrule
    \multicolumn{1}{r|}{\multirow{18}[12]{*}{0.1}} & \multicolumn{1}{r|}{\multirow{6}[4]{*}{75}} & \multicolumn{1}{r|}{\multirow{3}[2]{*}{10}} & 1     &       12,185  &           17,956  &          4,756  &              676  &              426  &              422  & \textbf{             399} \\
    \multicolumn{1}{r|}{} & \multicolumn{1}{r|}{} & \multicolumn{1}{r|}{} & 3     &       56,780  &           11,892  &              674  &              363  &              338  &              327  & \textbf{             292} \\
    \multicolumn{1}{r|}{} & \multicolumn{1}{r|}{} & \multicolumn{1}{r|}{} & 5     &       48,045  &              3,821  &              356  &              243  &              208  & \textbf{             178} &              214  \\
\cmidrule{3-11}    \multicolumn{1}{r|}{} & \multicolumn{1}{r|}{} & \multicolumn{1}{r|}{\multirow{3}[2]{*}{20}} & 1     &    202,904  &        215,447  &    142,030  &       26,079  & \textbf{      16,928} &       24,498  &       18,745  \\
    \multicolumn{1}{r|}{} & \multicolumn{1}{r|}{} & \multicolumn{1}{r|}{} & 3     &       13,220  &              1,767  &                 13  & \textbf{                   6} &                    7  &                    8  &                    7  \\
    \multicolumn{1}{r|}{} & \multicolumn{1}{r|}{} & \multicolumn{1}{r|}{} & 5     &       11,944  &                      24  & \textbf{                   4} &                    5  & \textbf{                   4} & \textbf{                   4} & \textbf{                   4} \\
\cmidrule{2-11}    \multicolumn{1}{r|}{} & \multicolumn{1}{r|}{\multirow{6}[4]{*}{100}} & \multicolumn{1}{r|}{\multirow{3}[2]{*}{10}} & 1     &    225,098  &        419,363  &    276,059  &       96,340  &    105,972  &    112,095  & \textbf{      89,448} \\
    \multicolumn{1}{r|}{} & \multicolumn{1}{r|}{} & \multicolumn{1}{r|}{} & 3     &    887,680  &        783,566  &    109,802  &       58,768  &       53,628  & \textbf{      46,936} &       53,654  \\
    \multicolumn{1}{r|}{} & \multicolumn{1}{r|}{} & \multicolumn{1}{r|}{} & 5     &    645,651  &        199,487  &       20,966  &       15,700  &       18,833  & \textbf{      11,710} &       12,749  \\
\cmidrule{3-11}    \multicolumn{1}{r|}{} & \multicolumn{1}{r|}{} & \multicolumn{1}{r|}{\multirow{3}[2]{*}{20}} & 1     &  -    &  -    &  -    &  -    &  -    &  -    &  -  \\
    \multicolumn{1}{r|}{} & \multicolumn{1}{r|}{} & \multicolumn{1}{r|}{} & 3     &    351,411  &           45,407  &          7,779  &          5,275  &          5,923  & \textbf{         5,054} &          5,124  \\
    \multicolumn{1}{r|}{} & \multicolumn{1}{r|}{} & \multicolumn{1}{r|}{} & 5     &    270,319  &           14,754  &          5,028  &          2,206  &          2,372  &          2,212  & \textbf{         2,192} \\
\cmidrule{2-11}    \multicolumn{1}{r|}{} & \multicolumn{1}{r|}{\multirow{6}[4]{*}{125}} & \multicolumn{1}{r|}{\multirow{3}[2]{*}{10}} & 1     &    247,942  &        448,522  &    193,963  &       92,868  &       78,944  &       71,829  & \textbf{      53,197} \\
    \multicolumn{1}{r|}{} & \multicolumn{1}{r|}{} & \multicolumn{1}{r|}{} & 3     &  -    &  -    &  -    &  -    &  -    &  -    &  -  \\
    \multicolumn{1}{r|}{} & \multicolumn{1}{r|}{} & \multicolumn{1}{r|}{} & 5     &  -    &  -    &  -    &  -    &  -    &  -    &  -  \\
\cmidrule{3-11}    \multicolumn{1}{r|}{} & \multicolumn{1}{r|}{} & \multicolumn{1}{r|}{\multirow{3}[2]{*}{20}} & 1     &  -    &  -    &  -    &  -    &  -    &  -    &  -  \\
    \multicolumn{1}{r|}{} & \multicolumn{1}{r|}{} & \multicolumn{1}{r|}{} & 3     &  -    &  -    &  -    &  -    &  -    &  -    &  -  \\
    \multicolumn{1}{r|}{} & \multicolumn{1}{r|}{} & \multicolumn{1}{r|}{} & 5     &  -    &  -    &  -    &  -    &  -    &  -    &  -  \\
    \midrule
    \multicolumn{1}{r|}{\multirow{18}[12]{*}{0.2}} & \multicolumn{1}{r|}{\multirow{6}[4]{*}{75}} & \multicolumn{1}{r|}{\multirow{3}[2]{*}{10}} & 1     &       12,121  &           20,715  &          7,593  &          1,244  & \textbf{         1,094} &          1,128  &          1,264  \\
    \multicolumn{1}{r|}{} & \multicolumn{1}{r|}{} & \multicolumn{1}{r|}{} & 3     &    467,593  &    1,009,053  &    318,850  &       65,594  & \textbf{      26,086} &       36,381  &       26,449  \\
    \multicolumn{1}{r|}{} & \multicolumn{1}{r|}{} & \multicolumn{1}{r|}{} & 5     &    465,102  &        501,715  &    260,340  &    141,485  &    105,020  & \textbf{      58,315} &    117,946  \\
\cmidrule{3-11}    \multicolumn{1}{r|}{} & \multicolumn{1}{r|}{} & \multicolumn{1}{r|}{\multirow{3}[2]{*}{20}} & 1     &    188,806  &        276,917  &    131,661  &       14,144  &       12,410  &       14,528  & \textbf{         9,460} \\
    \multicolumn{1}{r|}{} & \multicolumn{1}{r|}{} & \multicolumn{1}{r|}{} & 3     &  -    &  -    &  -    &  -    &  -    &  -    &  -  \\
    \multicolumn{1}{r|}{} & \multicolumn{1}{r|}{} & \multicolumn{1}{r|}{} & 5     &  -    &  -    &  -    &  -    &  -    &  -    &  -  \\
\cmidrule{2-11}    \multicolumn{1}{r|}{} & \multicolumn{1}{r|}{\multirow{6}[4]{*}{100}} & \multicolumn{1}{r|}{\multirow{3}[2]{*}{10}} & 1     &       95,393  &        248,309  &    151,732  &       52,426  &       33,054  &       51,027  & \textbf{      29,836} \\
    \multicolumn{1}{r|}{} & \multicolumn{1}{r|}{} & \multicolumn{1}{r|}{} & 3     &  -    &  -    &  -    &  -    &  -    &  -    &  -  \\
    \multicolumn{1}{r|}{} & \multicolumn{1}{r|}{} & \multicolumn{1}{r|}{} & 5     &  -    &  -    &  -    &  -    &  -    &  -    &  -  \\
\cmidrule{3-11}    \multicolumn{1}{r|}{} & \multicolumn{1}{r|}{} & \multicolumn{1}{r|}{\multirow{3}[2]{*}{20}} & 1     &  -    &  -    &  -    &  -    &  -    &  -    &  -  \\
    \multicolumn{1}{r|}{} & \multicolumn{1}{r|}{} & \multicolumn{1}{r|}{} & 3     &  -    &  -    &  -    &  -    &  -    &  -    &  -  \\
    \multicolumn{1}{r|}{} & \multicolumn{1}{r|}{} & \multicolumn{1}{r|}{} & 5     &  -    &  -    &  -    &  -    &  -    &  -    &  -  \\
\cmidrule{2-11}    \multicolumn{1}{r|}{} & \multicolumn{1}{r|}{\multirow{6}[4]{*}{125}} & \multicolumn{1}{r|}{\multirow{3}[2]{*}{10}} & 1     & \textbf{      49,518} &        312,285  &    255,539  &    312,597  &    204,109  &    213,868  &    230,317  \\
    \multicolumn{1}{r|}{} & \multicolumn{1}{r|}{} & \multicolumn{1}{r|}{} & 3     &  -    &  -    &  -    &  -    &  -    &  -    &  -  \\
    \multicolumn{1}{r|}{} & \multicolumn{1}{r|}{} & \multicolumn{1}{r|}{} & 5     &  -    &  -    &  -    &  -    &  -    &  -    &  -  \\
\cmidrule{3-11}    \multicolumn{1}{r|}{} & \multicolumn{1}{r|}{} & \multicolumn{1}{r|}{\multirow{3}[2]{*}{20}} & 1     &  -    &  -    &  -    &  -    &  -    &  -    &  -  \\
    \multicolumn{1}{r|}{} & \multicolumn{1}{r|}{} & \multicolumn{1}{r|}{} & 3     &  -    &  -    &  -    &  -    &  -    &  -    &  -  \\
    \multicolumn{1}{r|}{} & \multicolumn{1}{r|}{} & \multicolumn{1}{r|}{} & 5     &  -    &  -    &  -    &  -    &  -    &  -    &  -  \\
    \midrule
    \multicolumn{1}{r|}{\multirow{18}[12]{*}{0.3}} & \multicolumn{1}{r|}{\multirow{6}[4]{*}{75}} & \multicolumn{1}{r|}{\multirow{3}[2]{*}{10}} & 1     &          1,749  &              4,503  &          1,480  &              617  &              628  & \textbf{             471} &              693  \\
    \multicolumn{1}{r|}{} & \multicolumn{1}{r|}{} & \multicolumn{1}{r|}{} & 3     &    105,042  &        289,610  &       88,043  &          4,729  &          3,330  &          3,172  & \textbf{         2,798} \\
    \multicolumn{1}{r|}{} & \multicolumn{1}{r|}{} & \multicolumn{1}{r|}{} & 5     &    106,290  &        166,623  &       40,851  & \textbf{         2,344} &          2,990  &          3,066  &          3,140  \\
\cmidrule{3-11}    \multicolumn{1}{r|}{} & \multicolumn{1}{r|}{} & \multicolumn{1}{r|}{\multirow{3}[2]{*}{20}} & 1     &       46,165  &           92,516  &       64,630  &          7,094  &          3,820  & \textbf{         3,340} &          3,928  \\
    \multicolumn{1}{r|}{} & \multicolumn{1}{r|}{} & \multicolumn{1}{r|}{} & 3     &  -    &  -    &  -    &  -    &  -    &  -    &  -  \\
    \multicolumn{1}{r|}{} & \multicolumn{1}{r|}{} & \multicolumn{1}{r|}{} & 5     &  -    &  -    &  -    &  -    &  -    &  -    &  -  \\
\cmidrule{2-11}    \multicolumn{1}{r|}{} & \multicolumn{1}{r|}{\multirow{6}[4]{*}{100}} & \multicolumn{1}{r|}{\multirow{3}[2]{*}{10}} & 1     &          4,668  &           13,241  &       12,104  &          2,445  &          4,007  & \textbf{         2,414} &          3,902  \\
    \multicolumn{1}{r|}{} & \multicolumn{1}{r|}{} & \multicolumn{1}{r|}{} & 3     &       98,320  &        232,323  &    215,941  &       41,555  &       54,620  & \textbf{      41,508} &       48,554  \\
    \multicolumn{1}{r|}{} & \multicolumn{1}{r|}{} & \multicolumn{1}{r|}{} & 5     &    325,561  &        779,990  &    647,941  & \textbf{   130,800} &    160,442  &    160,147  &    136,534  \\
\cmidrule{3-11}    \multicolumn{1}{r|}{} & \multicolumn{1}{r|}{} & \multicolumn{1}{r|}{\multirow{3}[2]{*}{20}} & 1     &       73,317  &        104,970  &    110,269  & \textbf{      37,389} &       39,225  &       37,523  &       40,692  \\
    \multicolumn{1}{r|}{} & \multicolumn{1}{r|}{} & \multicolumn{1}{r|}{} & 3     &  -    &  -    &  -    &  -    &  -    &  -    &  -  \\
    \multicolumn{1}{r|}{} & \multicolumn{1}{r|}{} & \multicolumn{1}{r|}{} & 5     &  -    &  -    &  -    &  -    &  -    &  -    &  -  \\
\cmidrule{2-11}    \multicolumn{1}{r|}{} & \multicolumn{1}{r|}{\multirow{6}[4]{*}{125}} & \multicolumn{1}{r|}{\multirow{3}[2]{*}{10}} & 1     & \textbf{             621} &              7,117  &          5,397  &          6,667  &          3,226  &          7,361  &          4,736  \\
    \multicolumn{1}{r|}{} & \multicolumn{1}{r|}{} & \multicolumn{1}{r|}{} & 3     &    115,889  &        289,327  &    310,125  &    140,877  &    117,999  & \textbf{   105,835} &    108,763  \\
    \multicolumn{1}{r|}{} & \multicolumn{1}{r|}{} & \multicolumn{1}{r|}{} & 5     & \textbf{   225,090} &        447,631  &    374,133  &    292,988  &    495,361  &    332,646  &    334,653  \\
\cmidrule{3-11}    \multicolumn{1}{r|}{} & \multicolumn{1}{r|}{} & \multicolumn{1}{r|}{\multirow{3}[2]{*}{20}} & 1     & \textbf{   130,174} &        374,273  &    234,563  &    304,234  &    188,323  &    319,374  &    155,635  \\
    \multicolumn{1}{r|}{} & \multicolumn{1}{r|}{} & \multicolumn{1}{r|}{} & 3     &  -    &  -    &  -    &  -    &  -    &  -    &  -  \\
    \multicolumn{1}{r|}{} & \multicolumn{1}{r|}{} & \multicolumn{1}{r|}{} & 5     &  -    &  -    &  -    &  -    &  -    &  -    &  -  \\
    \midrule
    \multicolumn{4}{r|}{Average}  &    182,820  &        244,438  &    133,087  &       61,925  &       57,977  &       55,579  & \textbf{      49,844} \\
    \bottomrule
    \end{tabular}
\end{table}

\begin{table}[htb]
	\centering
	\scriptsize
	\setlength{\tabcolsep}{3pt}
	\caption{Solving time comparison across all techniques for SOC-CC benchmark.}
	\begin{tabular}{rrrr|rrrrrrr}
    \toprule
          &       &       & \multicolumn{1}{r}{} & \multicolumn{7}{c}{\textbf{Solving Time (s)}} \\
    \midrule
    \multicolumn{1}{r|}{$t$} & \multicolumn{1}{r|}{$n$} & \multicolumn{1}{r|}{$m$} & $\Omega$     & \cplex & \bddFlow    & \bddFlowLift   & \bddGeneral    & \bddGeneralLift   & \bddTarget    & \bddTargetFlow \\
    \midrule
    \multicolumn{1}{r|}{\multirow{18}[12]{*}{0.1}} & \multicolumn{1}{r|}{\multirow{6}[4]{*}{75}} & \multicolumn{1}{r|}{\multirow{3}[2]{*}{10}} & 1     &           22.8  &           18.5  & \textbf{             9.5} &           50.1  &          17.9  &           18.1  &          12.1  \\
    \multicolumn{1}{r|}{} & \multicolumn{1}{r|}{} & \multicolumn{1}{r|}{} & 3     &           54.2  &           14.4  & \textbf{             4.3} &           17.8  &             5.1  &           10.4  &             5.1  \\
    \multicolumn{1}{r|}{} & \multicolumn{1}{r|}{} & \multicolumn{1}{r|}{} & 5     &           52.5  &              8.2  & \textbf{             3.4} &           10.5  &             3.5  &              6.7  &             3.7  \\
\cmidrule{3-11}    \multicolumn{1}{r|}{} & \multicolumn{1}{r|}{} & \multicolumn{1}{r|}{\multirow{3}[2]{*}{20}} & 1     &    1,184.5  &        641.2  &        531.6  &        438.8  &       244.4  &        178.2  & \textbf{      165.2} \\
    \multicolumn{1}{r|}{} & \multicolumn{1}{r|}{} & \multicolumn{1}{r|}{} & 3     &           58.3  &           24.9  & \textbf{          10.3} &           32.3  &          11.1  &           29.5  &          11.7  \\
    \multicolumn{1}{r|}{} & \multicolumn{1}{r|}{} & \multicolumn{1}{r|}{} & 5     &           59.8  &           15.2  & \textbf{             8.5} &           15.7  &             8.9  &           13.8  &             9.0  \\
\cmidrule{2-11}    \multicolumn{1}{r|}{} & \multicolumn{1}{r|}{\multirow{6}[4]{*}{100}} & \multicolumn{1}{r|}{\multirow{3}[2]{*}{10}} & 1     &        557.1  &        494.3  &        368.6  &        218.8  &       176.5  &        173.3  & \textbf{      141.8} \\
    \multicolumn{1}{r|}{} & \multicolumn{1}{r|}{} & \multicolumn{1}{r|}{} & 3     &    1,648.1  &        948.1  &        216.3  &        194.8  & \textbf{      122.0} &        135.2  &       132.3  \\
    \multicolumn{1}{r|}{} & \multicolumn{1}{r|}{} & \multicolumn{1}{r|}{} & 5     &    1,039.0  &        327.3  &           46.1  &           87.1  &          55.3  &           54.9  & \textbf{         41.7} \\
\cmidrule{3-11}    \multicolumn{1}{r|}{} & \multicolumn{1}{r|}{} & \multicolumn{1}{r|}{\multirow{3}[2]{*}{20}} & 1     &  -    &  -    &  -    &  -    &  -    &  -    &  -  \\
    \multicolumn{1}{r|}{} & \multicolumn{1}{r|}{} & \multicolumn{1}{r|}{} & 3     &    1,733.5  &        234.3  & \textbf{          49.2} &           99.3  &          50.1  &        108.3  &          52.4  \\
    \multicolumn{1}{r|}{} & \multicolumn{1}{r|}{} & \multicolumn{1}{r|}{} & 5     &    1,177.4  &           87.7  &           36.9  &           53.1  & \textbf{         31.2} &           81.6  &          31.2  \\
\cmidrule{2-11}    \multicolumn{1}{r|}{} & \multicolumn{1}{r|}{\multirow{6}[4]{*}{125}} & \multicolumn{1}{r|}{\multirow{3}[2]{*}{10}} & 1     &        547.3  &        469.8  &        366.5  &        154.1  &       140.2  &           95.1  & \textbf{         83.3} \\
    \multicolumn{1}{r|}{} & \multicolumn{1}{r|}{} & \multicolumn{1}{r|}{} & 3     &  -    &  -    &  -    &  -    &  -    &  -    &  -  \\
    \multicolumn{1}{r|}{} & \multicolumn{1}{r|}{} & \multicolumn{1}{r|}{} & 5     &  -    &  -    &  -    &  -    &  -    &  -    &  -  \\
\cmidrule{3-11}    \multicolumn{1}{r|}{} & \multicolumn{1}{r|}{} & \multicolumn{1}{r|}{\multirow{3}[2]{*}{20}} & 1     &  -    &  -    &  -    &  -    &  -    &  -    &  -  \\
    \multicolumn{1}{r|}{} & \multicolumn{1}{r|}{} & \multicolumn{1}{r|}{} & 3     &  -    &  -    &  -    &  -    &  -    &  -    &  -  \\
    \multicolumn{1}{r|}{} & \multicolumn{1}{r|}{} & \multicolumn{1}{r|}{} & 5     &  -    &  -    &  -    &  -    &  -    &  -    &  -  \\
    \midrule
    \multicolumn{1}{r|}{\multirow{18}[12]{*}{0.2}} & \multicolumn{1}{r|}{\multirow{6}[4]{*}{75}} & \multicolumn{1}{r|}{\multirow{3}[2]{*}{10}} & 1     &           20.8  &           21.4  & \textbf{          13.6} &           39.6  &          29.8  &           25.7  &          18.6  \\
    \multicolumn{1}{r|}{} & \multicolumn{1}{r|}{} & \multicolumn{1}{r|}{} & 3     &    1,851.3  &    1,889.4  &        892.7  &        192.2  &          99.6  & \textbf{          74.5} &          84.4  \\
    \multicolumn{1}{r|}{} & \multicolumn{1}{r|}{} & \multicolumn{1}{r|}{} & 5     &    1,418.2  &        854.6  &        490.0  &        356.2  &       337.9  & \textbf{       164.8} &       199.9  \\
\cmidrule{3-11}    \multicolumn{1}{r|}{} & \multicolumn{1}{r|}{} & \multicolumn{1}{r|}{\multirow{3}[2]{*}{20}} & 1     &    1,361.3  &    1,216.9  &        776.6  &        307.7  &       192.4  &        176.1  & \textbf{      114.0} \\
    \multicolumn{1}{r|}{} & \multicolumn{1}{r|}{} & \multicolumn{1}{r|}{} & 3     &  -    &  -    &  -    &  -    &  -    &  -    &  -  \\
    \multicolumn{1}{r|}{} & \multicolumn{1}{r|}{} & \multicolumn{1}{r|}{} & 5     &  -    &  -    &  -    &  -    &  -    &  -    &  -  \\
\cmidrule{2-11}    \multicolumn{1}{r|}{} & \multicolumn{1}{r|}{\multirow{6}[4]{*}{100}} & \multicolumn{1}{r|}{\multirow{3}[2]{*}{10}} & 1     &        209.1  &        257.1  &        169.8  &           93.5  &          63.5  &           83.8  & \textbf{         47.5} \\
    \multicolumn{1}{r|}{} & \multicolumn{1}{r|}{} & \multicolumn{1}{r|}{} & 3     &  -    &  -    &  -    &  -    &  -    &  -    &  -  \\
    \multicolumn{1}{r|}{} & \multicolumn{1}{r|}{} & \multicolumn{1}{r|}{} & 5     &  -    &  -    &  -    &  -    &  -    &  -    &  -  \\
\cmidrule{3-11}    \multicolumn{1}{r|}{} & \multicolumn{1}{r|}{} & \multicolumn{1}{r|}{\multirow{3}[2]{*}{20}} & 1     &  -    &  -    &  -    &  -    &  -    &  -    &  -  \\
    \multicolumn{1}{r|}{} & \multicolumn{1}{r|}{} & \multicolumn{1}{r|}{} & 3     &  -    &  -    &  -    &  -    &  -    &  -    &  -  \\
    \multicolumn{1}{r|}{} & \multicolumn{1}{r|}{} & \multicolumn{1}{r|}{} & 5     &  -    &  -    &  -    &  -    &  -    &  -    &  -  \\
\cmidrule{2-11}    \multicolumn{1}{r|}{} & \multicolumn{1}{r|}{\multirow{6}[4]{*}{125}} & \multicolumn{1}{r|}{\multirow{3}[2]{*}{10}} & 1     & \textbf{       112.0} &        239.0  &        228.6  &        255.0  &       190.2  &        210.2  &       205.3  \\
    \multicolumn{1}{r|}{} & \multicolumn{1}{r|}{} & \multicolumn{1}{r|}{} & 3     &  -    &  -    &  -    &  -    &  -    &  -    &  -  \\
    \multicolumn{1}{r|}{} & \multicolumn{1}{r|}{} & \multicolumn{1}{r|}{} & 5     &  -    &  -    &  -    &  -    &  -    &  -    &  -  \\
\cmidrule{3-11}    \multicolumn{1}{r|}{} & \multicolumn{1}{r|}{} & \multicolumn{1}{r|}{\multirow{3}[2]{*}{20}} & 1     &  -    &  -    &  -    &  -    &  -    &  -    &  -  \\
    \multicolumn{1}{r|}{} & \multicolumn{1}{r|}{} & \multicolumn{1}{r|}{} & 3     &  -    &  -    &  -    &  -    &  -    &  -    &  -  \\
    \multicolumn{1}{r|}{} & \multicolumn{1}{r|}{} & \multicolumn{1}{r|}{} & 5     &  -    &  -    &  -    &  -    &  -    &  -    &  -  \\
    \midrule
    \multicolumn{1}{r|}{\multirow{18}[12]{*}{0.3}} & \multicolumn{1}{r|}{\multirow{6}[4]{*}{75}} & \multicolumn{1}{r|}{\multirow{3}[2]{*}{10}} & 1     & \textbf{             4.7} &           11.3  &              9.3  &           33.3  &          19.1  &           25.7  &          20.6  \\
    \multicolumn{1}{r|}{} & \multicolumn{1}{r|}{} & \multicolumn{1}{r|}{} & 3     &        249.8  &        392.1  &        109.6  &        137.0  &          93.4  &           63.0  & \textbf{         53.4} \\
    \multicolumn{1}{r|}{} & \multicolumn{1}{r|}{} & \multicolumn{1}{r|}{} & 5     &        214.8  &        233.3  &           78.6  &        130.3  &          66.1  &           63.5  & \textbf{         40.3} \\
\cmidrule{3-11}    \multicolumn{1}{r|}{} & \multicolumn{1}{r|}{} & \multicolumn{1}{r|}{\multirow{3}[2]{*}{20}} & 1     &        236.1  &        270.0  &        200.0  &        161.2  &          84.8  &           78.1  & \textbf{         64.7} \\
    \multicolumn{1}{r|}{} & \multicolumn{1}{r|}{} & \multicolumn{1}{r|}{} & 3     &  -    &  -    &  -    &  -    &  -    &  -    &  -  \\
    \multicolumn{1}{r|}{} & \multicolumn{1}{r|}{} & \multicolumn{1}{r|}{} & 5     &  -    &  -    &  -    &  -    &  -    &  -    &  -  \\
\cmidrule{2-11}    \multicolumn{1}{r|}{} & \multicolumn{1}{r|}{\multirow{6}[4]{*}{100}} & \multicolumn{1}{r|}{\multirow{3}[2]{*}{10}} & 1     & \textbf{          11.0} &           21.2  &           21.8  &           31.6  &          25.4  &           27.0  &          24.3  \\
    \multicolumn{1}{r|}{} & \multicolumn{1}{r|}{} & \multicolumn{1}{r|}{} & 3     &        245.3  &        374.1  &        319.2  &        118.7  &       144.6  & \textbf{       105.6} &       122.1  \\
    \multicolumn{1}{r|}{} & \multicolumn{1}{r|}{} & \multicolumn{1}{r|}{} & 5     &    1,081.2  &    1,282.5  &    1,448.4  & \textbf{       348.5} &       438.7  &        446.1  &       390.4  \\
\cmidrule{3-11}    \multicolumn{1}{r|}{} & \multicolumn{1}{r|}{} & \multicolumn{1}{r|}{\multirow{3}[2]{*}{20}} & 1     &        550.6  &        315.5  &        317.7  &        166.8  &       144.2  & \textbf{       137.7} &       176.7  \\
    \multicolumn{1}{r|}{} & \multicolumn{1}{r|}{} & \multicolumn{1}{r|}{} & 3     &  -    &  -    &  -    &  -    &  -    &  -    &  -  \\
    \multicolumn{1}{r|}{} & \multicolumn{1}{r|}{} & \multicolumn{1}{r|}{} & 5     &  -    &  -    &  -    &  -    &  -    &  -    &  -  \\
\cmidrule{2-11}    \multicolumn{1}{r|}{} & \multicolumn{1}{r|}{\multirow{6}[4]{*}{125}} & \multicolumn{1}{r|}{\multirow{3}[2]{*}{10}} & 1     & \textbf{             2.5} &           19.2  &           18.5  &           24.2  &          19.2  &           25.0  &          22.9  \\
    \multicolumn{1}{r|}{} & \multicolumn{1}{r|}{} & \multicolumn{1}{r|}{} & 3     &        301.2  &        468.9  &        445.3  &        238.9  & \textbf{      146.6} &        196.7  &       152.8  \\
    \multicolumn{1}{r|}{} & \multicolumn{1}{r|}{} & \multicolumn{1}{r|}{} & 5     &        655.2  &        703.1  &        566.7  &        439.0  &       817.1  & \textbf{       418.0} &       466.0  \\
\cmidrule{3-11}    \multicolumn{1}{r|}{} & \multicolumn{1}{r|}{} & \multicolumn{1}{r|}{\multirow{3}[2]{*}{20}} & 1     &    1,255.8  &    1,490.4  & \textbf{       639.7} &    1,251.0  &       962.6  &    1,211.9  &       752.7  \\
    \multicolumn{1}{r|}{} & \multicolumn{1}{r|}{} & \multicolumn{1}{r|}{} & 3     &  -    &  -    &  -    &  -    &  -    &  -    &  -  \\
    \multicolumn{1}{r|}{} & \multicolumn{1}{r|}{} & \multicolumn{1}{r|}{} & 5     &  -    &  -    &  -    &  -    &  -    &  -    &  -  \\
    \midrule
    \multicolumn{4}{r|}{Average}  &     597.18  &     444.80  &     279.91  &     189.90  &    158.05  &     147.95  & \textbf{   121.54} \\
    \bottomrule
    \end{tabular}%
   \label{tab:cc_time}
\end{table}

\end{document}